\documentclass[11pt]{article}

\usepackage[margin=1in]{geometry}
\usepackage{amsmath,amssymb,amsthm,mathtools,mathrsfs,bm}
\usepackage{enumitem}
\usepackage{soul}
\usepackage{xcolor}
\usepackage{dsfont}
\usepackage[colorinlistoftodos]{todonotes} 
\definecolor{my-red}{rgb}{0.5,0.0,0.0}
\definecolor{my-blue}{rgb}{0.0,0.0,0.6}
\definecolor{my-green}{rgb}{0.0,0.5,0.0}
\definecolor{light-gray}{gray}{0.6}
\usepackage[colorlinks=true, citecolor=my-green, linkcolor=my-blue, urlcolor=magenta]{hyperref}

\numberwithin{equation}{section}

\newtheorem{theorem}{Theorem}[section]
\newtheorem{lemma}[theorem]{Lemma}
\newtheorem{proposition}[theorem]{Proposition}
\newtheorem{corollary}[theorem]{Corollary}
\theoremstyle{definition}

\newcommand{\eps}{\varepsilon}

\newcommand{\1}{\mathbf{1}}
\newcommand{\one}{\mathds{1}}

\newcommand{\N}{\mathbb{N}}
\newcommand{\Z}{\mathbb{Z}}
\newcommand{\Q}{\mathbb{Q}}
\newcommand{\R}{\mathbb{R}}

\newcommand{\E}{\mathbb{E}}
\renewcommand{\P}{\mathbb{P}}

 \newcommand{\cF}{\mathcal{F}}

 \newcommand{\cN}{\mathcal{N}}

\DeclareMathOperator{\sign}{sign}

\DeclareMathOperator{\op}{op}

\DeclareMathOperator{\TV}{TV}

\DeclareMathOperator{\diag}{diag}

\newcommand{\PP}{\mathbb{P}}

\newcommand{\EE}{\mathbb{E}}
\DeclareMathOperator{\Var}{Var}
\DeclareMathOperator{\Cov}{Cov}

\DeclareMathOperator{\Ent}{Ent}
\DeclareMathOperator{\ent}{ent}

\newcommand{\capOmega}{\widehat{\Omega}}
\newcommand{\gap}{\operatorname{gap}}
\newcommand{\iid}{\stackrel{iid}{\sim}}

\newcommand{\Semigroup}[1]{(P_t)_{t\ge0}}

\title{Rapid mixing within a phase for Ising models}

\author{Vasyl Paliy\thanks{Department of Mathematics, Brown University. Email:\url{vasyl_paliy@brown.edu}} \;\;\;\;\;\;\;\;\; Youngtak Sohn \thanks{Division of Applied Mathematics. Brown University, Email: \url{youngtak_sohn@brown.edu}}}
\date{\today}

\begin{document}
\maketitle
\vspace{-0.25cm}

\begin{abstract}
We study Glauber dynamics for Ising measures conditioned to have a sufficiently large majority of $+1$ spins. Under a sparse spectral width condition on the interaction matrix, we derive a Poincar\'{e} inequality for the conditioned measures from a magnetization drift condition, and a modified logarithmic Sobolev inequality from a stronger, entropy weighted condition. As applications, we obtain approximate posterior sampling in total variation distance for $\mathbb{Z}_2$ synchronization at high enough signal-to-noise ratio using $O(n\log n)$ Glauber updates from a spectral initialization. For ferromagnetic Ising models on random $d$-regular graphs, we prove rapid phase ordering down to inverse temperatures of order $1/d$, extending the temperature range of Gheissari, Sly, and Sohn (2025). Finally, we extend these inequalities to unconditioned Ising measures under an inward drift condition outside the conditioned region. This yields $O(n\log n)$ mixing of Glauber dynamics for the Sherrington–Kirkpatrick model at every fixed inverse temperature when the external field is sufficiently large.
\end{abstract}

\section{Introduction}
Glauber dynamics for Ising models has been studied in the last forty or more years, with a well-developed theory of rapid mixing at high temperatures under correlation decay conditions (see e.g. the survey~\cite{Martinelli-notes}). At low temperatures, however, phase coexistence can lead to exponentially slow mixing due to bottlenecks between phases. For example, ferromagnetic Ising models at low temperatures can exhibit two phases, with a majority of $+1$ and $-1$ spins, respectively, separated by nearly balanced configurations with exponentially small equilibrium probability (see, e.g., \cite[Chapter 15]{LP}). A natural question is whether transitions between these two phases are the only obstruction to rapid mixing. More precisely, if the dynamics is initialized with a bias toward one phase, does it rapidly approach the Gibbs measure conditioned on that phase, on a time scale much shorter than the time needed to leave it?

This question arises naturally in both statistical physics and Bayesian statistics. In statistical physics, the theory of phase ordering kinetics studies how order develops after cooling below the critical temperature, including how an initial bias affects the competition between phases~\cite{bray1994theory}.  In Bayesian statistics, the choice of initialization has long been recognized as important for posterior sampling~\cite{gelman1992inference}. For multimodal posteriors, a chain may explore the region around one mode while rarely visiting others~\cite{besag1993spatial}. This raises the question of whether a suitable initialization permits rapid (quasi-)equilibration within such a region, despite slow mixing globally.

Several rigorous results on Glauber dynamics for Ising models address this phase ordering question at positive temperatures. For the Curie–Weiss model, Levin, Luczak, and Peres~\cite{LLP} proved rapid mixing at low temperatures for dynamics restricted to positive magnetization. On the infinite regular tree, Caputo and Martinelli~\cite{CaMaTree} established rapid local convergence to the plus Gibbs measure from sufficiently biased product initializations at low temperatures. Gheissari and Sinclair~\cite{GhSi22} proved quasi-equilibration within the plus phase from the all-plus configuration on torus in $\Z^d$ and random regular graphs. On random regular graphs, Gheissari, Sly, and Sohn~\cite{GheissariSlySohn} established rapid phase ordering from arbitrary sufficiently biased initializations at low temperatures. More recently, Gheissari and Sly~\cite{GheissariSly} proved rapid phase ordering from sufficiently biased product initializations on $\Z^2$ throughout the low-temperature regime.

These results concern ferromagnetic models, and the proofs use various monotonicity arguments. In this paper, we study the phase ordering in Ising models with interactions of either sign. A motivating example from Bayesian statistics is the posterior distribution of $\mathbb Z_2$ synchronization~\cite{singer2011angular}. Let $\theta$ be the unknown vector uniformly distributed on $\{-1,+1\}^n$. We observe $Y\in \R^{n\times n}$ given by
\[
Y=\frac{\lambda}{n}\theta\theta^\top+W,
\]
where $\lambda>0$ is the signal-to-noise ratio and $W\sim \mathrm{GOE}(n)$ is independent noise; $W$ is a symmetric $n\times n$ matrix with independent entries above the diagonal with $(W_{ii})_{i\leq n}\stackrel{iid}{\sim}\cN(0,2/n)$ and $(W_{ij})_{i<j}\iid \cN(0,1/n)$. The posterior distribution $\pi_Y(\cdot)=\P(\theta=\cdot \mid Y)$ takes the form
\begin{equation}\label{eq:posterior:Z2}
\begin{split}
\pi_Y(\sigma)
&\propto\exp\bigg(\frac{\lambda}{2}\langle \sigma, Y\sigma\rangle\bigg)\\
&=\exp\bigg(\frac{\lambda^2}{2n}\big(\langle \sigma, \theta\rangle\big)^2+\frac{\lambda}{2}\langle \sigma, W\sigma\rangle \bigg), 
\end{split}
\end{equation}
where $\sigma\in \{-1,+1\}^n$. Thus, $\pi_Y$ is a Sherrington-Kirkpatrick (SK) model~\cite{sherrington_kirkpatrick75} with ferromagnetic bias aligned with $\theta$. The ``low temperature'' regime corresponds to $\lambda>1$, where the non-trivial estimation of $\theta$ (up to a global sign) becomes possible~\cite{DAM16}. Indeed, for every fixed $\lambda>1$, with high probability over $(\theta,W)$, the Glauber dynamics has worst-case mixing $\exp(\Omega(n))$ because of a bottleneck between configurations with positive overlap,
\[ \Omega_\theta^+:=\{\sigma:m_\theta(\sigma)\geq 0\}, \quad\textnormal{where}\quad m_\theta(\sigma):=\frac1n\langle\sigma,\theta\rangle, \]
and those in the opposite phase, $-\Omega_\theta^+$~\cite{lelarge2019fundamental}. 

For sufficiently large $\lambda$, Montanari and Wu~\cite{montanari2023posterior} obtained normalized $W_2$ sampling guarantees for $\pi_Y$ using a diffusion process and approximate message passing (covering more general priors). Our first main theorem shows that, for every fixed $\lambda\ge\lambda_0$, Glauber dynamics from any $X_0$ with $m_\theta(X_0)\ge C/\lambda$ quasi-equilibrates to $\pi_Y(\cdot\mid\Omega_\theta^+)$ in $O_\lambda(n\log n)$ updates. Spectral initialization~\cite{benaych2011eigenvalues} and an independent global sign flip then yield full posterior sampling in total variation distance in $O_{\lambda}(n\log n)$ updates, answering the MCMC sampling question mentioned in~\cite{Fan2021bayesian, montanari2023posterior}.

\begin{theorem}\label{theorem:z2-synchronization}
There are universal constants $C,\lambda_0>0$ such that for every
fixed $\lambda\ge\lambda_0$ the following holds with probability $1-o(1)$ over $(\theta,W)$ as $n\to\infty$. Let $(X_t)_{t\geq 0}$ be discrete time Glauber dynamics on $\pi_Y$ initialized from any $X_0$ with
$m_{\theta}(X_0)\geq C/\lambda$. Then, for some $C_0\equiv C_0(\lambda)>0$,
\begin{equation}\label{eq:z2-phase-ordering}
    \big\|
        \mathbb P_{X_0}(X_t\in \cdot\,)
        -\pi_Y(\,\cdot \mid \Omega_{\theta}^{+}\big)
    \big\|_{\mathrm{TV}}
    \le n^{-10}, \quad\textnormal{for all}\quad C_0n\log n\le t\le e^{n/C_0}.
\end{equation}
As a result, the following holds with probability $1-o(1)$. Let $v(Y)$ be a unit eigenvector corresponding to the largest eigenvalue of $Y$, and let $\sign(v(Y))$ denote its coordinatewise sign, with $\sign(0)=1$. Then,
\begin{equation}\label{eq:z2-spectral-sampling}
    \big\|
        \mathbb P_{\sign(v(Y))}
        (\xi X_t\in\cdot\,)-\pi_Y
    \big\|_{\mathrm{TV}}
    \le n^{-10} \quad\textnormal{for all}\quad C_0n\log n\le t\le e^{n/C_0},
\end{equation}
where $\xi\sim \textnormal{Unif}(\{-1,+1\})$ is independent of $(\theta, W)$ and $(X_t)_{t\geq 0}$.
\end{theorem}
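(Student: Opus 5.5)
The plan is to derive the theorem from the general machinery announced in the abstract. Under a sparse spectral width condition and a magnetization drift condition, the measure conditioned on a phase satisfies a modified log-Sobolev inequality. Combined with an escape-time estimate, this controls the chain started inside the phase. By the gauge symmetry $\sigma\mapsto \sigma\odot\theta$, $W\mapsto D_\theta W D_\theta$ (which preserves the GOE law), we may assume $\theta=\mathbf 1$. Then $\pi_Y$ is an SK model with Curie–Weiss term $\frac{\lambda^2}{2n}(\sum_i\sigma_i)^2$, $m_\theta$ is the magnetization, and $\Omega^+_\theta=\{m\ge 0\}$. Since $\|W\|_{op}\le 2+o(1)$ with high probability, the interaction matrix $\lambda^2 n^{-1}\mathbf 1\mathbf 1^\top+\lambda W$ is a rank-one perturbation of a matrix of spectral width $O(\lambda)$. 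This is the kind of ``sparse spectral width'' structure we need: one large direction, namely the magnetization, and a bounded remainder.

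\textbf{Step 1 (within-phase functional inequality).} We verify the entropy-weighted drift condition on the region $\{m\ge c/\lambda\}$. Given the local fields $h_i=\lambda^2 m+\lambda (W\sigma)_i$, the expected magnetization drift under Glauber dynamics is $\frac1n\sum_i(\tanh h_i-\sigma_i)$. For $m\ge C/\lambda$ and $\lambda$ large, $\lambda^2 m\ge C\lambda$ dominates the noise field. By Cauchy–Schwarz, $\|\lambda W\sigma\|_2\le 2.1\lambda\sqrt n$, so all but a small fraction of coordinates have $|(\lambda W\sigma)_i|\le \lambda^2 m/2$. Hence $\tanh h_i$ is close to $1$ on most coordinates, and the drift pushes $m$ toward the equilibrium value $m^*\approx 1-O(e^{-c\lambda})$. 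We must also handle the regime $m\ge m^*$ and the fluctuation regime near $m^*$ with the entropy weighting. Then the paper's conditioned MLSI theorem gives an MLSI constant of order $1/n$ for $\pi_Y(\cdot\mid \{m\ge c/\lambda\})$, which implies mixing of the conditioned chain in $O(n\log n)$ steps from any start.

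\textbf{Step 2 (from conditioned chain to actual chain).} We compare the true dynamics to the conditioned one. We show that from any $X_0$ with $m(X_0)\ge C/\lambda$, the chain does not exit $\{m\ge c/\lambda\}$ before time $e^{n/C_0}$, except with probability $e^{-\Omega(n)}$. This follows from the drift in Step 1 via a supermartingale/Azuma argument on $m$, whose increments are $O(1/n)$. In addition, $\pi_Y(m<c/\lambda\mid\Omega^+)\le e^{-\Omega(n)}$ by a first-moment bound: the Curie–Weiss gain $\lambda^2 n m^2/2$ near $m^*$ outweighs the $O(\lambda n)$ fluctuation of $\langle\sigma,W\sigma\rangle$. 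A standard coupling of the two chains up to the exit time then gives the TV bound $\le n^{-10}$ on the time window $[C_0 n\log n, e^{n/C_0}]$.

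\textbf{Step 3 (spectral initialization).} By the BBP transition for rank-one perturbations of GOE at $\lambda>1$, $|\langle v(Y),\theta\rangle|/\sqrt n\to$ a value close to $1$ for large $\lambda$, so $\langle v,\theta\rangle^2\ge 1-O(1/\lambda^2)$. A standard sign-rounding estimate gives $\sum_i \mathbf 1[\sign(v_i)\ne \pm\theta_i]\le 2n\|v\mp\theta/\sqrt n\|^2=O(n/\lambda^2)$. Hence $|m_\theta(\sign v)|\ge 1-O(\lambda^{-2})\ge C/\lambda$. If the sign is negative, we apply Step 2 to the phase $-\Omega^+_\theta$ using the symmetry $\pi_Y(\sigma)=\pi_Y(-\sigma)$. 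Combining with $\xi$ and $\pi_Y=\frac12\pi_Y(\cdot\mid\Omega^+)+\frac12\pi_Y(\cdot\mid-\Omega^+)$ (the tie set $m=0$ is exponentially negligible) yields the second display of the theorem.

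The main obstacle is Step 1. The paper's drift condition must hold uniformly over every configuration in the region, including adversarial $\sigma$ aligned with the top eigenvectors of $W$. It also has to be upgraded to the entropy-weighted form near $m^*$, where the drift is only of order $O(1/\lambda)$ relative to the noise. The first thing I would try is to use the $\ell_2$ bound on $W\sigma$ together with a counting argument over the coordinates where $|(W\sigma)_i|$ is large.
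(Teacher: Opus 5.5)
Your Steps 2 and 3 are in line with the paper: basin retention by an Azuma argument on $m$ with a same-update coupling, an energy comparison showing $\pi_Y(0\le m_\theta<m_0)$ is exponentially small, and BBP plus sign rounding and the global flip symmetry. The gap is in Step 1. The core inequality, Theorem~\ref{theorem:core-func-ineq}, cannot be applied to $\pi_Y(\cdot\mid m\ge c/\lambda)$, because its hypothesis $8\omega_M(J)\le 1-\eps$ fails badly there. The sparse spectral width is a maximum over principal submatrices of size up to $n-M$, and for $M=cn/\lambda$ this includes $|T|\approx n$. For such $T$, $J_{T,T}=\frac{\lambda^2}{2n}\one_T\one_T^\top+\frac\lambda2 W_{T,T}$ has width of order $\lambda^2$, and even $\frac\lambda2 W$ alone has width about $2\lambda$. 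So ``rank one plus a remainder of width $O(\lambda)$'' is not what the theorem needs: it needs width below $1/8$ on every admissible $T$. This forces a tiny core. The paper takes $R=\lfloor n/\lambda^3\rfloor$ and $M=n-2R$. Then the rank-one part contributes at most $\lambda^2R/n\le 1/\lambda$. The noise part is controlled by the sparse-submatrix GOE bound $\max_{|T|\le\rho n}\|W_{T,T}\|_{\mathrm{op}}\lesssim\sqrt{\rho\log(e/\rho)}$ with $\rho=2\lambda^{-3}$, giving a contribution $\lesssim\sqrt{\log\lambda/\lambda}$. Note also that \eqref{assumption:stability-entropy} is a condition at $\mathbf 1$ relative to $R_M$, not a drift condition over a region. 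Here it follows from the law of large numbers for $\psi(\lambda^2+\lambda g_i)$ with $g_i=(W\mathbf 1)_i$.

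You then need a way to transfer the MLSI from $\widehat\Omega_M$, where $1-M/n=\Theta(\lambda^{-3})$, down to $\{m\ge m_0\}$. This is the lifting step, Theorem~\ref{theorem:lifting}-(b): a Lyapunov argument with $V=\exp(\frac\zeta2 d(\sigma,\widehat\Omega_M))$, combined with the entropy chain rule and Rothaus' lemma. It requires the atom bound \eqref{assumption:H4} and an inward drift proportional to the distance from $\mathbf 1$, namely $n((Pm)(\sigma)-m(\sigma))\ge(1-m(\sigma))/2$ for all $m_0\le m(\sigma)<M/n$. Your proposed counting via $\|\lambda W\sigma\|_2\lesssim\lambda\sqrt n$ is too lossy for this. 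It only bounds the number of coordinates with $|(\lambda W\sigma)_i|>\lambda^2 m/2$ by $O(n/(\lambda m)^2)$. Near the core this is about $n/\lambda^2$, far more than the $r=n(1-m)/2\approx n/\lambda^3$ minus spins, so no positive drift follows there. The fix is to write $W\sigma=W\mathbf 1-2W\one_T$, with $T$ the set of minus spins. The Gaussian row sums $g_i$ are handled by their empirical distribution: only a fraction $\Phi(-\lambda/8)=o(\lambda^{-3})$ fall below $-\lambda/8$. Meanwhile $\|W\one_T\|_2\le K\sqrt r$ scales with $r$, so the number of bad coordinates is $O(K^2 r/C'^2)\le r/8$ once $C'$ is large.
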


\subsection{Core functional inequalities under sparse spectral width condition}
For the posterior distribution $\pi_Y$, conditional on $\theta$, the change of variables $\sigma_i\mapsto\theta_i\sigma_i$ maps $\theta$ to the all-ones vector $\mathbf1\in \R^n$. The overlap $m_\theta(\sigma)$ then becomes the magnetization $\frac{1}{n}\langle\mathbf1,\sigma\rangle$, so the two phases correspond to positive and negative magnetization.

We thus study general Ising measures conditioned to have sufficiently large magnetization. For a symmetric matrix $J\in\mathbb R^{n\times n}$ and $h\in\mathbb R^n$, define the measure $\mu_{J,h}$ on $\{-1,+1\}^n$ as
\[
    \mu_{J,h}(\sigma)\propto
    \exp(\langle J\sigma,\sigma\rangle+\langle h,\sigma\rangle),
    \qquad \sigma\in\Omega:=\{-1,+1\}^n.
\]
For $-n\le M\le n$ with $M\equiv n\pmod2$, consider the conditioned measure
\[
    \mu_{J,h,M}:=\mu_{J,h}(\,\cdot\mid\widehat\Omega_M), \quad\textnormal{where}\quad \widehat\Omega_M:=\left\{\sigma\in\Omega:\sum_{i=1}^n\sigma_i\ge M\right\}.
\]
Motivated by $\mathbb Z_2$ synchronization, we are particularly interested in the case where $\{\sigma:\sum_i \sigma_i\geq 0\}$ is one of the phases of $\mu_{J,h}$. We call $\widehat{\Omega}_M$ the \emph{core} of this phase. 

For a probability measure $\nu$ on $\Omega$, Glauber dynamics for $\nu$ chooses a coordinate uniformly and resamples its spin from the conditional law under $\nu$. Write $\sigma^{(i)}$ for the configuration obtained by flipping spin $i$. At a configuration $\sigma$ with $\nu(\sigma)>0$, the conditional flip probability is
\[
c_i(\sigma;\nu):=
\frac{\nu(\sigma^{(i)})}{\nu(\sigma)+\nu(\sigma^{(i)})}.
\]
The discrete time Glauber transition kernel is 
\[
P_\nu(\sigma,\tau):=
\begin{cases}
\frac{1}{n}c_i(\sigma;\nu)&\tau=\sigma^{(i)},\\
1-\frac1n\sum_{i=1}^n c_i(\sigma;\nu)&\tau=\sigma,
\end{cases}
\]
and $P_{\nu}(\sigma,\tau)=0$ otherwise. In particular, Glauber dynamics for $\mu_{J,h,M}$ is the reflected dynamics on $\widehat\Omega_M$: any update leaving  $\widehat{\Omega}_M$ is replaced by holding at the current configuration.

Our first goal is to prove rapid mixing of this reflected dynamics. We then give magnetization drift conditions under which the unrestricted dynamics on $\Omega$, started from sufficiently large magnetization, stays coupled to the reflected dynamics long enough to mix within $\widehat{\Omega}_M$. Similar arguments have appeared in the study of phase ordering~\cite{GhSi22, GheissariSlySohn}.

We obtain within-phase mixing bounds through functional inequalities on the core $\widehat\Omega_M$, namely Poincar\'{e} and modified logarithmic Sobolev inequalities (MLSI). Define the Dirichlet form
\begin{equation}\label{eq:dirichlet:form}
    \mathcal E_\nu(f,g):=
    \frac12\sum_{\sigma,\tau}\nu(\sigma)P_\nu(\sigma,\tau)
    \bigl(f(\sigma)-f(\tau)\bigr)\bigl(g(\sigma)-g(\tau)\bigr),
    \qquad \mathcal E_\nu(f):=\mathcal E_\nu(f,f).
\end{equation}
The spectral gap and
MLSI constants are defined by
\[
    \gap(\nu):=\inf_{f:\Omega\to \R}
        \frac{\mathcal E_\nu(f)}{\Var_\nu(f)},
    \qquad
    \rho_{\mathrm{LS}}(\nu):=
    \inf_{f:\Omega\to (0,\infty)}
        \frac{\mathcal E_\nu(f,\log f)}{\Ent_\nu(f)},
\]
where $\Ent_\nu(f):=\nu[f\log f]
    -\nu[f]\log\nu[f]$. These quantities give standard bounds on the worst-case total variation mixing time. Define
    \[
    t_{\mathrm{mix}}(P_\nu,\eta):=
    \min\left\{t\geq 0:
    \max_{\sigma\in \Omega:\,\nu(\sigma)>0}
    \left\|P_\nu^t(\sigma,\cdot)-\nu\right\|_{\mathrm{TV}}\le\eta
    \right\}, 
    \]
    and let $\nu_{\min}:=\min_{\sigma:\nu(\sigma)>0}\nu(\sigma)$. For $0<\eta\leq 1/2$, we have
    \begin{equation}\label{eq:mixing-time-bounds}
    t_{\mathrm{mix}}(P_\nu,\eta)
    \lesssim\min\left\{
    \frac{\log(1/\nu_{\min})+\log(1/\eta)}{\gap(\nu)}\,,\,
    \frac{\log\log(1/\nu_{\min})+\log(1/\eta)}
    {\rho_{\mathrm{LS}}(\nu)}
    \right\}.
    \end{equation}
    See e.g. \cite[Chapter~12]{LP} and \cite{bobkov2006modified}.

    For unconditioned measure $\mu_{J,h}$, recent advances~\cite{BB19, EKZ, ChenEldan} established that MLSI constant satisfies $\rho_{{\mathrm LS}}=\Omega(n^{-1})$ under the \emph{full spectral width condition} $\lambda_{\max}(J)-\lambda_{\min}(J)<1/2$. The challenge is that this condition fails (with high probability) for $J=\frac{\lambda^2}{2n}\theta\theta^\top+\frac{\lambda}{2}W$, the interaction matrix of the posterior distribution $\pi_Y$ in \eqref{eq:posterior:Z2}, when $\lambda$ is large (the low temperature regime). 
    
    For the core-conditioned measure  $\mu_{J,h,M}$, we instead consider a spectral condition involving only principal submatrices of size at most $n-M$. For $0\le M<n$, define the \emph{sparse spectral width}
\[
    \omega_M(J):=
    \max_{\substack{\varnothing\ne T\subseteq[n]\\|T|\le n-M}}
    \bigl(\lambda_{\max}(J_{T,T})-\lambda_{\min}(J_{T,T})\bigr),
\]
where $J_{T,T}$ denotes $T\times T$ submatrix of $J$.
Here, the size $n-M$ comes from the geometry of $\widehat{\Omega}_M$: $R_M:=\frac{n-M}{2}$ is the largest number of minus spins allowed in $\widehat\Omega_M$, so any two configurations in $\widehat\Omega_M$ differ on at most
$2R_M=n-M$ coordinates. Indeed, we prove in Proposition~\ref{prop:sparse-covariance} that 
\[
8\omega_M(J)\leq 1-\eps
\]
implies spectral independence~\cite{anari2024spectral} of $\mu_{J,h,M}$. In particular, $\|\Cov(\mu_{J,h,M})\|_{\op}$ is uniformly bounded over $h\in \R^n$.

Replacing the full spectral width condition with a condition on $\omega_M(J)$ allows our results to apply to $\pi_Y$ at large fixed $\lambda$: we choose $M$ large enough that $8\omega_M(J)\le1-\eps$, but small enough that $\widehat\Omega_M$ still contains the positive posterior mode. To state the next result, define
\[
    \phi(u):=\frac1{1+e^{2u}},
    \qquad
    \psi(u):=\phi(u)\log\frac1{\phi(u)(1-\phi(u))}.
\]
The following theorem establishes functional inequalities on the core $\widehat\Omega_M$ and lies at the heart of our proof of Theorem~\ref{theorem:z2-synchronization}.

\begin{theorem}\label{theorem:core-func-ineq}
Let $\eps\in(0,1)$ and $M\in\{0,\ldots,n-2\}$ satisfy
$M\equiv n\pmod2$. Suppose $J\in \R^{n\times n}$ is symmetric and satisfies $8\omega_M(J)\le1-\eps$. Then, for some universal constants $c,C>0$, the following hold with $\delta_*:=1/(4e^2)$ and $R_M:=\frac{n-M}{2}$.
\begin{itemize}
    \item[(a)] For every $h\in \R^n$ such that the magnetization drift condition
    \begin{equation}\tag{H1}\label{assumption:stability}
        \sum_{i=1}^n\phi\Big(h_i+2\sum_{j\neq i}J_{ij}\Big)\le\delta_*R_M
    \end{equation}
    holds, we have
    \[
        \gap(\mu_{J,h,M})\ge\frac cn
        \exp\left(-\frac{C\|J\|_{\mathrm{op}}}{\eps}\right).
    \]

    \item[(b)] For every $h\in \R^n$ such that the entropy-weighted drift condition
    \begin{equation}\tag{H1-E}\label{assumption:stability-entropy}
        \sum_{i=1}^n\psi\Big(h_i+2\sum_{j\neq i}J_{ij}\Big)\le\delta_*R_M
    \end{equation}
   holds, we have
    \[
        \rho_{\mathrm{LS}}(\mu_{J,h,M})\ge\frac cn
        \exp\left(-\frac{C\|J\|_{\mathrm{op}}}{\eps}\right).
    \]
\end{itemize}
\end{theorem}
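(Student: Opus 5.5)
The plan is a multiscale decomposition of $\mu_{J,h,M}$ adapted from the stochastic localization / entropic stability framework of \cite{EKZ, ChenEldan}. It splits the estimate into two layers. The first layer is a "spectral independence" layer, which handles the interactions $J$ through Proposition~\ref{prop:sparse-covariance}. The second layer is a "ground state" layer: a (possibly strongly) tilted product-like measure restricted to $\widehat\Omega_M$, which is handled through the drift conditions (H1) or (H1-E).

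\textbf{Step 1: Gaussian decomposition.} First write $J = J' - \gamma I + \frac12 (\lambda_{\max}+\lambda_{\min})$-shift so that, on every principal submatrix of size at most $n-M$, the relevant quadratic form is positive semidefinite with norm at most $\omega_M(J)$. Diagonal terms are constant on $\Omega$, so this shift changes nothing. The sparsity restriction is enough here because the differences of two configurations in $\widehat\Omega_M$ are supported on at most $n-M$ coordinates, and only such differences enter the localization.

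\textbf{Step 2: Run the localization.} Run Eldan's stochastic localization (equivalently, the Hubbard--Stratonovich decomposition) driven by the matrix $J$ up to time $1$. This writes $\mu_{J,h,M}$ as a mixture of measures $\mu_{0,h+y,M}$, with $y$ random. These are product measures conditioned on $\widehat\Omega_M$.

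\textbf{Step 3: Transfer along the localization.} Along the path, I would use Proposition~\ref{prop:sparse-covariance}: for $8\omega_M\le 1-\eps$, the covariance of every intermediate tilt $\mu_{tJ,h',M}$ has operator norm at most a constant. This gives entropic/approximate conservation of variance (for part (a)) or of entropy (for part (b)) along the localization. The resulting factor is $\exp(-C\|J\|_{\op}/\eps)$. It arises from integrating $\|\Cov\|\cdot\|J\|$ over the path, and from the standard bound on the Glauber rates when they are compared between $\mu_{J,h,M}$ and the mixture components, since each flip changes the log-density by at most $O(\|J\|_{\op})$-controlled amounts after averaging. The outcome is a reduction of gap/MLSI of $\mu_{J,h,M}$ to a uniform lower bound for the gap/MLSI of the conditioned product measures $\nu_{g}:=\mu_{0,g,M}$ over all external fields $g$ reachable by the process.

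\textbf{Step 4: Conditioned product measures.} This is the main obstacle. A product measure conditioned on $\{\sum\sigma_i\ge M\}$ is a "restricted Bernoulli" measure, and it can mix poorly if the constraint is active and its mass is stuck near the boundary. The drift condition enters exactly here. At the starting point $\sigma=\mathbf 1$, the effective field at site $i$ is $h_i+2\sum_{j\ne i}J_{ij}$. So (H1) says the expected number of minus spins under the product measure with these fields is at most $\delta_*R_M$, well inside the allowed $R_M$. I must show that this persists for the random fields $g=h+y$ produced by localization. I would argue via a martingale/concentration bound on $\sum_i\phi(g_i)$, using the bounded covariance again. For part (b), $\psi$ arises as the entropy of a single Bernoulli($\phi$) coordinate, plus a log term, which is needed for the corresponding entropy bound.

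\textbf{Step 5: Gap and MLSI of the conditioned product measure.} Given a product measure $\otimes\mathrm{Bern}(\phi(g_i))$ whose mean number of minus spins is at most $\delta_*R_M$, with $\delta_*=1/(4e^2)$, the conditioning on $\#\{-\}\le R_M$ is a large-deviation event of tiny probability. I would then prove gap and MLSI of order $1/n$ for the constrained measure by a comparison/canonical-path argument against the unconstrained product, whose MLSI is $\gtrsim 1/n$. Flips at the boundary $\#\{-\}=R_M$ are suppressed, but the Chernoff bound $(e\delta_*)^{R_M}$ with $\delta_*=1/(4e^2)$ makes the boundary mass negligible. Alternatively, since the constrained Bernoulli measure is strongly Rayleigh, I could use the known MLSI for the down-up walk on homogeneous strongly Rayleigh measures, and compare to the reflected Glauber chain.

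I expect Step 4 to be the hardest: keeping the drift condition valid uniformly along the localization path. If this fails, I would instead absorb the constraint before localizing, running the localization only on the quadratic part and treating $\one_{\widehat\Omega_M}$ as part of the base measure.
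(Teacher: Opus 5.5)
Your overall architecture (linear-tilt localization, with Proposition~\ref{prop:sparse-covariance} supplying approximate conservation of variance and entropy) matches the paper's. However, the reduction you make in Step~3 and try to carry out in Step~4 cannot work.

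\textbf{Why Step~4 fails.} At time $1$ the localized measure is $\mu_{0,g,M}$ with $g=h+h_1$. Here $h_1=(2J')^{1/2}Y_1$ contains the nondegenerate Gaussian term $(2J')^{1/2}B_1$ (after replacing $J'$ by $J'+\delta I$). So every open set of fields is reached with positive probability, including fields near $g=-H\1$. For those fields, \eqref{eq:failure:uniform:gap} gives $\gap(\mu_{0,g,M})\lesssim R_M e^{-2H}/n$. Hence there is no uniform lower bound over reachable fields, and the drift condition cannot be propagated almost surely along the path.

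A high-probability version obtained by concentration does not rescue this. The transfer step gives $\Var_{\widehat\nu}(f)\le e^{\gamma}\,\widehat{\E}[\Var_{\widehat\nu_1}(f)]$, and you then need $\widehat{\E}[\Var_{\widehat\nu_1}(f)]\lesssim n\,\widehat{\E}[\mathcal E_{\widehat\nu_1}(f)]$ for \emph{every} $f$. On exceptional paths nothing ties $\Var_{\widehat\nu_1}(f)$ to $\mathcal E_{\widehat\nu_1}(f)$. The only uniform control there is $\|f\|_\infty^2$ times their probability, which gives at best a weak Poincar\'e inequality, not a gap or an MLSI. Your fallback of putting $\one_{\widehat\Omega_M}$ into the base measure is what Step~2 already does, and it ends at the same conditioned product measures.

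\textbf{The missing idea.} The paper never asks for the drift condition at time $1$.
\begin{itemize}
\item It localizes the ambient measure $\nu_t$ on $\Omega$, tracks its restriction $\widehat\nu_t$, and adds the boundary term $\mathcal Q^{\widehat\Omega}_\nu(f;a)=\sum_{\sigma\in\widehat\Omega,\tau\notin\widehat\Omega}\widehat\nu(\sigma)P_\nu(\sigma,\tau)(f(\sigma)-a)^2$ to the Dirichlet form (and $\mathcal B^{\widehat\Omega}_\nu$ for entropy).
\item With this term, a conditioned product measure satisfies $\Var_{\widehat\nu}(f)\le n\mathcal E^\partial_\nu(f)$ for \emph{every} field. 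You extend $f$ by the constant $a$ off the core and use total variance plus tensorization for the unconditioned product (Lemma~\ref{lemma:general-boundary-decomp}).
\item The augmented forms remain supermartingales under $\widehat{\P}=\P(\cdot\mid X\in\widehat\Omega)$. This is because $w_t(x)=\nu_t(x)/\nu_t(\widehat\Omega)$ is a $\widehat{\P}$-martingale for all $x\in\Omega$, including $x$ outside the core, and the edge weights are concave in $(w_t(\sigma),w_t(\tau))$.
\item Localization therefore yields $\Var_{\mu_{J,h,M}}(f)\le n e^{8\|J\|_{\op}/\eps}\bigl(\mathcal E_{\mu_{J,h,M}}(f)+\mathcal Q_M(f)\bigr)$, with the boundary term evaluated at the \emph{original} fields.
\item Only now do (H1) and (H1-E) enter, deterministically. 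Together with $\omega_M(J)<1/2$, they give $q_-\le\frac45q_+$ on layers with more than $2R_M/3$ minus spins, and $q_-\le R_M/(3n)$ on the layer with $\lfloor 2R_M/3\rfloor$ minus spins.
\item This lets $\mathcal Q_M(f)$ be transported inward layer by layer and absorbed as $C\mathcal E_{\mu_{J,h,M}}(f)+Ce^{-cR_M}n^{-1}\Var_{\mu_{J,h,M}}(f)$.
\item When $R_M\lesssim1+\|J\|_{\op}/\eps$, the error cannot be absorbed. The paper instead uses a Holley--Stroock comparison with $\mu_{0,h+2A\1,M}$, where $A=J-\diag(J)$, at cost $e^{8R_M\omega_M(J)}$.
\item For part (b), $\psi$ arises from the two-point log-Sobolev constant, which bounds $\mathcal B_M(f)$ by the quadratic boundary term of $\sqrt f$.
\end{itemize}

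\textbf{A smaller point.} Your Step~1 cannot be realized: a single scalar shift does not make every principal submatrix of size $\le n-M$ positive semidefinite with norm $\le\omega_M(J)$. The localization is driven by the full $J-\lambda_{\min}(J)I$, of norm at most $2\|J\|_{\op}$. Sparsity enters only through the covariance bound, which is why the constant is $\exp(-C\|J\|_{\op}/\eps)$.
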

The proof of Theorem~\ref{theorem:core-func-ineq} is based on the stochastic localization scheme by Chen and Eldan~\cite{ChenEldan}. We refer to Section~\ref{sec:proof-technique} for a proof overview.

We remark that the conditions~\eqref{assumption:stability} and \eqref{assumption:stability-entropy} may be viewed as drift conditions as follows. For $\sigma\in \Omega$, define the local field
\[
\ell_i(\sigma):=h_i+2\sum_{j\ne i}J_{ij}\sigma_j.
\]
With this notation, the probability of resampling spin
$i$ to $-1$ in a Glauber update for $\mu_{J,h}$ is
\[
\mu_{J,h}(\sigma_i=-1\mid \sigma_{-i})=\phi\big(\ell_i(\sigma)\big).
\]
Thus, the probabilities of increasing and decreasing the magnetization
\[
m(\sigma):=\frac{1}{n}\sum_i \sigma_i
\]
in one update are
\[
q_+(\sigma):=\frac1n\sum_{i:\sigma_i=-1}
    \bigl(1-\phi(\ell_i(\sigma))\bigr),
\qquad
q_-(\sigma):=\frac1n\sum_{i:\sigma_i=+1}
    \phi(\ell_i(\sigma)).
\]
In particular, the magnetization drift (scaled by $n$) is $n(P_{\mu_{J,h}}m(\sigma)-m(\sigma))=2(q_{+}(\sigma)-q_{-}(\sigma))$. Condition~\eqref{assumption:stability} states that
$q_-(\mathbf1)\le\delta_*R_M/n$, and since $q_{+}(\1)=0$, this bounds the downward drift at $\mathbf1$ by $2\delta_* R_M/n$. Using \eqref{assumption:stability} and $\omega_M(J)<1/2$, we show in Lemma~\ref{prop:boundary-stability} below that $q_-(\sigma)\le\frac45q_+(\sigma)$ for $\sigma\in \widehat{\Omega}_M$ with more than $2R_M/3$ minus spins. Therefore, under these assumptions, the dynamics has a uniform bias toward larger magnetization
near the boundary of $\widehat\Omega_M$.

Condition~\eqref{assumption:stability-entropy} weights each
minus-update probability $\phi(\ell_i(\1)$ by
$\log(1/[\phi(\ell_i(\1))(1-\phi(\ell_i(\1)))])$. This factor reflects the extra cost of entropy estimates, compared with variance estimates, for strongly biased updates (cf. Lemma~\ref{lemma:entropic-boundary-reduction}). Since $\psi(u) \ge \log(4)\phi(u)$, \eqref{assumption:stability-entropy} implies \eqref{assumption:stability}.

Finally, the drift conditions such as \eqref{assumption:stability} and \eqref{assumption:stability-entropy} are necessary in the following sense: even when $J=0$, taking $h_i=-H$ for every $i$ and $R_M\ge1$, it is not difficult to see from Cheeger's inequality that
\begin{equation}\label{eq:failure:uniform:gap}
\gap(\mu_{0,h,M})
\le \frac{2R_M}{n(1+e^{2H})}.
\end{equation}
Since $\rho_{\mathrm{LS}}(\cdot)\leq 2\gap(\cdot)$~\cite{bobkov2006modified}, both constants deteriorate exponentially as $H\to\infty$. When the external field is strongly negative, the measure concentrates near the boundary, while moving between boundary configurations requires rare flips against the field. Thus, additional assumptions on $h\in \R^n$ are needed to guarantee a gap of order $1/n$, and these drift conditions exclude this obstruction.

\subsection{Lifting the core functional inequalities}

To prove Theorem~\ref{theorem:z2-synchronization}, we apply Theorem~\ref{theorem:core-func-ineq} (after the change of variables $\sigma_i\mapsto \theta_i\sigma_i$) to a core $\widehat{\Omega}_M$ with $M/n=1-\Theta(1/\lambda^3)$. To achieve phase ordering from initial magnetization as small as $\Theta(1/\lambda)$, we then \emph{lift} these functional inequalities in Theorem~\ref{theorem:core-func-ineq} to a larger region under additional drift conditions. In doing so, we obtain MLSI for the SK model that may be of independent interest.

Let $P_{J,h}\equiv P_{\mu_{J,h}}$ denote the discrete time Glauber kernel of $\mu_{J,h}$. To extend the functional inequalities from the core $\widehat\Omega_M$ to a larger region $\widehat\Omega_S$, where $-n\le S<M$, we assume that the reflected dynamics has inward drift toward the core: for some $\zeta\in(0,1)$,
\begin{equation}\tag{H2}\label{assumption:tail-stability}
q_{+}(\sigma)
-q_{-}(\sigma)\one_{m(\sigma)>S/n}
\ge\frac{\zeta}{2}\big(1-m(\sigma)\big)
\qquad \sigma\in\widehat\Omega_S\setminus\widehat\Omega_M
\end{equation}
The indicator accounts for reflection at $m(\sigma)=S/n$. Thus, this condition requires upward magnetization drift throughout $\widehat\Omega_S\setminus\widehat\Omega_M$. For MLSI, we also assume that, for some $\eta>0$,
\begin{equation}\tag{H3}\label{assumption:H4}
    \max_{\sigma\in\widehat\Omega_S\setminus\widehat\Omega_M}
    \log\bigg(\frac1{\mu_{J,h,S}(\sigma)}\bigg)\le \eta n.
\end{equation}
By combining Theorem~\ref{theorem:core-func-ineq} with Lyapunov drift arguments~\cite{MT09, TM22}, we prove the following result in Section~\ref{sec:lifting}.
\begin{theorem}\label{theorem:lifting}
Let $J\in \R^{n\times n}$ be symmetric, $h\in\mathbb R^n$, and $\eps,\zeta\in(0,1)$.
Let $M,S$ be integers of the same parity as $n$, with $0\le M\le n-2$ and $-n\le S<M$. Suppose that $8\omega_M(J)\le1-\eps$ and
\eqref{assumption:tail-stability} holds for $\zeta$ from $\widehat\Omega_M$ to
$\widehat\Omega_S$. For some universal constants $c,C>0$, we have the following.
\begin{itemize}
\item[(a)] If \eqref{assumption:stability} holds, then
\[
    \gap(\mu_{J,h,S})\ge\frac{c\zeta}{n}
        \exp\left(-\frac{C\|J\|_{\mathrm{op}}}{\eps}\right).
\]
\item[(b)] If \eqref{assumption:stability-entropy} holds,
$M\leq (1-\vartheta) n$ for some $\vartheta\in(0,1)$, and
\eqref{assumption:H4} holds with $\eta>0$, then
\[
    \rho_{\mathrm{LS}}(\mu_{J,h,S})\ge
    \frac{c_{\zeta,\vartheta,\eta}}n
    \exp\left(-\frac{C\|J\|_{\mathrm{op}}}{\eps}\right),
\]
where $c_{\zeta,\vartheta,\eta}>0$ depends only on $\zeta, \vartheta,\eta>0$.
\end{itemize}
\end{theorem}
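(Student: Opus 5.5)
We lift the core inequalities of Theorem~\ref{theorem:core-func-ineq} to $\widehat\Omega_S$ by a decomposition of variance (resp.\ entropy) into a part living on the core and a part controlled by a Lyapunov drift. Write $\nu:=\mu_{J,h,S}$ and $A:=\widehat\Omega_M$, so $\nu(\cdot\mid A)=\mu_{J,h,M}$. Moreover, the Glauber kernel of $\mu_{J,h,M}$ is the restriction of $P_\nu$ to $A$ with reflection, hence $\nu(A)\,\mathcal E_{\mu_{J,h,M}}(f)\le \mathcal E_\nu(f)$.

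\textbf{Step 1: Lyapunov function.} Take $V(\sigma):=1-m(\sigma)$, or more flexibly $V=e^{a n(1-m)}$ with small $a$. By \eqref{assumption:tail-stability}, for $\sigma\in\widehat\Omega_S\setminus A$ the reflected chain satisfies
\[
P_\nu V(\sigma)-V(\sigma)=-\tfrac{2}{n}\bigl(q_+(\sigma)-q_-(\sigma)\one_{m(\sigma)>S/n}\bigr)\le -\tfrac{\zeta}{n}V(\sigma).
\]
This is the drift condition $LV\le -\frac{\zeta}{n}V+b\one_A$ in the sense of \cite{MT09,TM22}, with $b\lesssim 1/n$ since $V\le 2$ on the core boundary layer.

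\textbf{Step 2: Poincaré.} Use the standard Lyapunov-to-Poincaré argument. For $f$ with $\nu(f\one_A)=0$, the inequality $\int f^2 \frac{-LV}{V}\,d\nu\le \mathcal E_\nu(f)$ (valid for reversible chains and positive $V$) gives
\[
\frac{\zeta}{n}\,\nu(f^2\one_{A^c})\le \mathcal E_\nu(f)+\frac{b}{\min_A V}\,\nu(f^2\one_A).
\]
The drift condition gives $-LV/V\ge \zeta/n$ off $A$, which bounds the outer mass. For the inner mass, the core gap $\gap(\mu_{J,h,M})\ge \frac cn e^{-C\|J\|/\eps}$ yields
\[
\nu(f^2\one_A)\le \nu(A)\,\gap(\mu_{J,h,M})^{-1}\,\mathcal E_{\mu_{J,h,M}}(f)+\text{mean term}.
\]
Here the mean term vanishes by the centering on $A$. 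The ratio $b/\min_A V$ must be handled. I would instead choose $V$ with $V\ge 1$ and $b$ of order $1/n$ only on the inner boundary layer of $A$ (one flip from $A^c$), which is harmless. Combining as in Bakry–Cattiaux–Guillin gives $\gap(\nu)\gtrsim \zeta\cdot\gap(\mu_{J,h,M})$.

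\textbf{Step 3: MLSI.} Use the Cattiaux–Guillin-type decomposition. Write
\[
\Ent_\nu(f)\le \Ent_\nu\text{-defect}+\nu(A)\Ent_{\mu_{J,h,M}}(f)+\text{(entropy of the two-block mixture)}.
\]
Replace $f^2$ by $f\log f$ weighted quantities in the Lyapunov inequality, via the variant
\[
\int f\,\frac{-LV}{V}\,d\nu\lesssim \mathcal E_\nu(f,\log f).
\]
This variant is obtained from $\mathcal E(f,\log f)\ge 4\mathcal E(\sqrt f)$ applied to $\sqrt f$, and controls the $A^c$ mass. The cost is a $\log(1/\nu_{\min})$-type factor on $A^c$, which is why \eqref{assumption:H4} (giving $\log(1/\nu(\sigma))\le\eta n$) is imposed. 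The factor $n$ must be absorbed using the exponential Lyapunov function $V=e^{an(1-m)}$, whose drift is $\gtrsim \zeta a$ times $V$ rather than $\zeta/n$. Choosing $a\asymp \zeta$ with $\vartheta$ ensuring $1-m\ge\vartheta$ on the relevant scale (the hypothesis $M\le(1-\vartheta)n$ keeps the core boundary at macroscopic distance from $\1$), one compensates $\eta n$.

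\textbf{Main obstacle.} I expect the main difficulty to be this MLSI step: turning a linear drift of size $\zeta/n$ into an entropy bound with an $n$-independent constant. The fix I would try first is the exponential Lyapunov function together with a Rothaus-type splitting $\Ent(f)\le \Ent(\tilde f)+2\Var(\sqrt f)$. The variance part then follows from (a), and the $\tilde f$ part uses \eqref{assumption:H4}. This tracks the constants $c_{\zeta,\vartheta,\eta}$.
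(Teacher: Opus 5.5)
Your part (a) is essentially the paper's argument. You center at $\EE_{\mu_{J,h,M}}[f]$, apply $\EE_{\mu_{J,h,S}}[-\frac{\mathcal LV}{V}g^2]\le n\mathcal E_{\mu_{J,h,S}}(g)$ with $V$ constant on the core and linear in $d(\sigma,\1)$ outside (the paper uses $V=\max\{R,d(\sigma,\1)\}$, $\mathcal L=n(P_{J,h,S}-I)$), and bound $p\Var_{\mu_{J,h,M}}(f)$ by Theorem~\ref{theorem:core-func-ineq}(a). One claim is not automatic: that the boundary penalty is $O(1/n)$. Drift $\zeta/n$ off the core forces slope at least $1/R$ per flip for $V$, so on $\Omega_M$ the penalty is $-\mathcal LV/V=-nq_-/R$. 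What you need is therefore $q_-\le R/n$ on $\Omega_M$, which is a relative bound, not the trivial absolute bound on the increment of $V$ that you cite. It follows from \eqref{assumption:stability} and $\omega_M(J)<1/2$ via Lemma~\ref{prop:boundary-stability}(a); without it you lose a factor $n/R$.

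Part (b) has a genuine gap, with two problems.

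First, the global Rothaus splitting $\Ent(f)\le\Ent((\sqrt f-c)^2)+2\Var(\sqrt f)$, with (a) for the variance, cannot give order $1/n$. \eqref{assumption:H4} is only assumed off the core, and even if it held globally it would give $\Ent((\sqrt f-c)^2)\le\eta n\Var(\sqrt f)\lesssim\eta n^2\mathcal E(\sqrt f)$. The paper instead first uses the chain rule $\Ent_{\mu_{J,h,S}}(f)=p\Ent_{\mu_{J,h,M}}(f)+\Ent_{\mu_{J,h,S}}(\tilde f)$, with $\tilde f=a^2$ on $\capOmega_M$ and $a^2=\EE_{\mu_{J,h,M}}[f]$. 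It then applies Rothaus at level $a$, so only $n\EE_{\mu_{J,h,S}}[(\sqrt f-a)^2\one_{\capOmega_S\setminus\capOmega_M}]$ remains. This must be bounded by $n\mathcal E_{\mu_{J,h,S}}(\sqrt f)+p\Ent_{\mu_{J,h,M}}(f)$, with no extra factor $n$ on the last term.

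Second, and this is the real missing idea, you never control the penalty where $\mathcal LV>0$. An exponential Lyapunov function does supply the exterior drift $-\mathcal LV/V\gtrsim\zeta^2R\gtrsim\zeta^2\vartheta n$. But your $V=e^{an(1-m)}$ is not constant on the core, so $\mathcal LV/V\approx 2an(q_--q_+)$ is positive wherever $q_->q_+$ inside $\capOmega_M$. For example, near $\1$ we have $q_+\approx0$ while $q_-$ may be of order $R/n$. This gives a penalty up to order $aR\,p\,\EE_{\mu_{J,h,M}}[(\sqrt f-a)^2]$. Bounding it by $p\Ent_{\mu_{J,h,M}}(f)$, or by the core Poincar\'e inequality, costs a factor $R\asymp n$. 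Even with the paper's capped choice $V=\exp(\frac\zeta2 d(\sigma,\capOmega_M))$, the penalty is $n(e^{\zeta/2}-1)\,p\,\mathcal Q_M(\sqrt f;a)$, and the crude bound $q_-\le R/n$ loses the same factor.

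The paper closes this by boundary absorption. Lemma~\ref{lemma:boundary-transport} uses $q_-\le\frac45q_+$ on the last $\lceil R/3\rceil$ layers of the core (Lemma~\ref{prop:boundary-stability}) to give $\mathcal Q_M(\sqrt f;a)\lesssim\mathcal E_{\mu_{J,h,M}}(\sqrt f)+\frac1n\Ent_{\mu_{J,h,M}}(f)$. The penalty is then $O(n\mathcal E_{\mu_{J,h,S}}(\sqrt f)+p\Ent_{\mu_{J,h,M}}(f))$. Dividing by the drift $\zeta^2R$ and using $R\ge\vartheta n/2$ yields the required bound.
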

Taking $S=-n$, so that $\mu_{J,h,S}=\mu_{J,h}$, and $M=n-2\lfloor\delta n/2\rfloor$ in Theorem~\ref{theorem:lifting} yields the following corollary upon verifying \eqref{assumption:stability} and \eqref{assumption:tail-stability}, and, under the additional assumptions, \eqref{assumption:stability-entropy} and \eqref{assumption:H4}.

\begin{corollary}\label{cor:high-field-func-ineq}
Let $J\in \R^{n\times n}$ be a symmetric matrix, and denote $\gamma:=\omega_0(J)\equiv \lambda_{\max}(J)-\lambda_{\min}(J)$. Let $h\in \R^n$ and fix $\eps,\delta\in(0,1)$. Assume that for $n\geq 2/\delta$
\begin{equation}\label{assumption:high-field-corollary}
\begin{gathered}
    8\omega_{n-\lfloor\delta n\rfloor}(J)\le1-\eps\qquad\textnormal{and}\qquad
    \sum_{i=1}^n\phi\Bigl(h_i+2\sum_{j\ne i}J_{ij}-8\gamma\Bigr)
    \le c\delta n,
\end{gathered}
\end{equation}
where $c>0$ is a universal constant. Then,  $\gap(\mu_{J,h})\geq c_{\eps,\gamma}/n$ for some $c_{\eps,\gamma}>0$ that depends only on $\eps,\gamma>0$. If in addition, for some $H > 0$,
\[
    \frac{1}{n} \sum_{i=1}^n h_i \leq H \quad \textnormal{and} \quad \sum_{i=1}^n\psi\Bigl(h_i+2\sum_{j\ne i}J_{ij}\Bigr) \leq c\delta n,
\]
then $\rho_{\mathrm{LS}}(\mu_{J,h})\geq c_{\eps,\gamma,\delta,H}/n$ for some $c_{\eps,\gamma, \delta,H}>0$ that depends only on $\eps,\gamma,\delta, H>0$.
\end{corollary}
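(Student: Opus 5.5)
We apply Theorem~\ref{theorem:lifting} with $S=-n$, so that $\mu_{J,h,S}=\mu_{J,h}$, and $M:=n-2\lfloor\delta n/2\rfloor$. Then $M\equiv n \pmod 2$ and $n-M\le\delta n$. Since $n\ge 2/\delta$, we have $R_M=\lfloor\delta n/2\rfloor\ge \delta n/4$. Because $n-M\le \lfloor \delta n\rfloor$, monotonicity of $\omega$ in the allowed subset size gives $8\omega_M(J)\le 8\omega_{n-\lfloor\delta n\rfloor}(J)\le 1-\eps$. The quantities $\zeta,\vartheta,\eta$ will depend only on $\eps,\gamma,\delta,H$. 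We also need $\|J\|_{\op}$ bounded in terms of $\gamma$. Diagonal entries of $J$ only shift the Hamiltonian by a constant on $\Omega$, so we may normalize $J$ by adding a multiple of the identity; choosing the shift so that $\lambda_{\max}=-\lambda_{\min}$ gives $\|J\|_{\op}\le\gamma$. This shift changes $\sum_{j\ne i}J_{ij}$ by nothing, so the hypotheses are unchanged.

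\textbf{Verifying (H1).} Since $\phi$ is decreasing, $\phi(h_i+2\sum_{j\ne i}J_{ij})\le\phi(h_i+2\sum_{j\ne i}J_{ij}-8\gamma)$. The sum is therefore at most $c\delta n\le 4c R_M\le\delta_* R_M$ once the universal $c$ is taken small enough. Under the extra hypothesis, (H1-E) follows in the same way from $\sum_i\psi(\cdots)\le c\delta n$.

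\textbf{Verifying (H2).} This is the main step. We must show $q_+(\sigma)-q_-(\sigma)\ge \frac{\zeta}{2}(1-m(\sigma))$ whenever $\sigma$ has $k>R_M$ minus spins; note $1-m(\sigma)=2k/n$ and there is no indicator since $S=-n$. Let $A$ be the set of minus spins, so $\sigma=\1-2\1_A$ and $\ell_i(\sigma)=\ell_i(\1)-4\sum_{j\in A, j\ne i}J_{ij}$. Call a spin $i$ \emph{bad} if $\phi(\ell_i(\1)-8\gamma)>1/4$; by hypothesis the number of bad spins is at most $4c\delta n$, which is $\ll k$. The perturbation is controlled in $\ell_2$: $\|J\1_A\|_2^2\le\|J\|_{\op}^2|A|\le\gamma^2 k$, the diagonal terms again being removed by the shift. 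Hence by Chebyshev, at most $k/100$ indices satisfy $|4(J\1_A)_i|>40\gamma$. This loses a constant multiple of $\gamma$ in the field rather than $8\gamma$, so I expect to rebalance constants: either exploit $\sum_i\phi(\ell_i(\1)-8\gamma)$ together with $\phi(u-t)\le e^{2t}\phi(u)$ to absorb larger shifts with a factor depending on $\gamma$, or accept $\zeta=\zeta(\gamma)$. Concretely, $e^{2t}\phi(u)$-type bounds make $\sum_{i\in A}\phi(\ell_i(\sigma))$ and $\sum_{i\notin A}\phi(\ell_i(\sigma))$ comparable to $\sum_i\phi(\ell_i(\1))$ times $e^{O(\gamma)}$ on indices with small perturbation, plus the Chebyshev exceptional set. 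This gives $q_+\ge \frac1n(k-o(k))\cdot\frac34$ and $q_-\le \frac1n\big(e^{O(\gamma)}c\delta n+k/100\big)$, which is acceptable as long as $k\ge R_M\gtrsim\delta n$ and $c$ is small. The delicate point is making the constants close without $c$ depending on $\gamma$. If that fails, I would split $A$ according to whether $k\le n/2$ and use Lemma~\ref{prop:boundary-stability}-style arguments with the sparse width for $|A|\le n-M$, and the full width bound for larger $A$. The outcome is $\zeta=\zeta(\gamma)$, which is permitted.

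\textbf{Verifying (H3) and concluding.} For MLSI we need $\vartheta$ and $\eta$. We have $M\le n-2\lfloor\delta n/2\rfloor\le(1-\delta/2)n$, so take $\vartheta=\delta/2$. For (H3), compare $\mu_{J,h}(\sigma)$ with the uniform measure: the log-ratio of any two weights is at most $2\|J\|_{\op}n+2\|h\|_1$. The quantity $\|h\|_1$ is not controlled by the mean bound alone, so instead bound $\log(1/\mu(\sigma))\le \log Z-\langle J\sigma,\sigma\rangle-\langle h,\sigma\rangle$. We have $\log Z\le n\log2+\gamma n+\sum_i|h_i|$, and evaluating at $\sigma$ removes the positive part of $h$ up to $2\sum_{h_i>0}h_i$. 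The mean bound $\frac1n\sum h_i\le H$, together with the $\psi$-condition (which forces $h_i+2\sum_{j\ne i} J_{ij}\ge -O(\gamma)-O(1)$ for most $i$, hence controls the negative parts up to $O(\gamma n)$ plus few exceptions), yields $\eta=O(1+\gamma+H)$; I expect this bookkeeping to need care with the exceptional coordinates. Theorem~\ref{theorem:lifting}(a) and (b) then give the stated constants, with the factor $\exp(-C\|J\|_{\op}/\eps)\ge\exp(-C\gamma/\eps)$.
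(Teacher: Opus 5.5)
Your overall plan matches the paper's proof: Theorem~\ref{theorem:lifting} with $S=-n$, $M=n-2\lfloor\delta n/2\rfloor$ and $\vartheta=\delta/2$, after recentring $J$ by a multiple of the identity. Your checks of $8\omega_M(J)\le 1-\eps$, \eqref{assumption:stability} and \eqref{assumption:stability-entropy} are correct.

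The genuine gap is in \eqref{assumption:tail-stability}: your argument does not establish it with a universal $c$. Putting the Chebyshev threshold at $40\gamma$ discards $32\gamma$ more than the $8\gamma$ built into the hypothesis. For non-exceptional $i$ you then only get $\phi(\ell_i(\sigma))\le e^{64\gamma}\phi(\ell_i(\1)-8\gamma)$, hence $\sum_i\phi(\ell_i(\sigma))\le k/100+e^{64\gamma}c\delta n$. Since $k$ ranges down to $R_M+1$, which is of order $\delta n$, this stays below $k$ only if $c\lesssim e^{-64\gamma}$. That contradicts the universality of $c$.

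Your fallbacks do not repair this. Allowing $\zeta=\zeta(\gamma)$ is irrelevant, because the problem is the sign of $q_+-q_-$, not its size. The sparse width gives no information on sets of more than $n-M$ minus spins. Your separate bound $q_+\ge\frac34\cdot\frac1n(\cdots)$ is also inconsistent with the $40\gamma$ threshold.

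The missing observation is that the exceptional set need only be a fixed fraction of $k$, not $o(k)$. Write $T$ for the set of minus spins, $|T|=k>R_M$, and set $A:=J-\diag(J)$. Then $\ell_i(\1)-\ell_i(\sigma)=4(A\one_T)_i$. This involves the off-diagonal part, which the identity shift does not remove, but still $\|A\|_{\op}\le 2\|J'\|_{\op}=\gamma$. Take $E:=\{i:4(A\one_T)_i>8\gamma\}$. Chebyshev gives $|E|\le\|A\one_T\|_2^2/(4\gamma^2)\le k/4$ (and $E=\varnothing$ if $\gamma=0$), while $\phi(\ell_i(\sigma))\le\phi(\ell_i(\1)-8\gamma)$ for $i\notin E$. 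Hence $\sum_i\phi(\ell_i(\sigma))\le k/4+c\delta n\le k/4+\delta_*R_M\le k/2$. The identity $q_+(\sigma)-q_-(\sigma)=\frac1n\bigl(k-\sum_i\phi(\ell_i(\sigma))\bigr)$ then gives $q_+-q_-\ge k/(2n)=\frac14(1-m(\sigma))$, which is \eqref{assumption:tail-stability} with $\zeta=1/2$. The $-8\gamma$ in the hypothesis is calibrated exactly for this step.

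Your \eqref{assumption:H4} step is also left open. Pointwise lower bounds on $\ell_i(\1)$ for most $i$ would not by themselves control $\sum_i(-h_i)_+$ over the remaining coordinates. The clean input is the linear growth $\psi(u)\ge(-u)_+$, which turns the $\psi$-hypothesis into the $\ell_1$ bound $\sum_i(-\ell_i(\1))_+\le c\delta n$ over all coordinates at once. Since $\ell_i(\1)=h_i+2(A\1)_i$, this gives
$\sum_i(-h_i)_+\le c\delta n+2\|A\1\|_1\le c\delta n+2\sqrt n\|A\1\|_2\le c\delta n+2\gamma n$.
Therefore $\sum_i|h_i|=\sum_i h_i+2\sum_i(-h_i)_+\le(H+2c\delta+4\gamma)n$, and
$\max_\sigma\log(1/\mu_{J,h}(\sigma))\le n\log 2+2n\|J'\|_{\op}+2\sum_i|h_i|\le C(1+\gamma+H)n$.
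This is \eqref{assumption:H4} with $\eta$ depending only on $\gamma$ and $H$.
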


We next apply this result to the Sherrington-Kirkpatrick model~\cite{sherrington_kirkpatrick75} with inverse temperature $\beta>0$, defined by $J=\beta W/2$ and $W\sim \mathrm{GOE}(n)$. For every $\beta<1/4$ and $h=0$, it was shown in \cite{anari2021entropic, ChenEldan} that $\rho_{{\mathrm LS}}(\mu_{\beta W/2, 0})\gtrsim_{\beta} 1/n$ with high probability over $W$. The range $\beta<1/4$ stems from the full spectral width condition $\lambda_{\max}(J)-\lambda_{\min}(J)<1/2$, and it is widely believed that the rapid mixing of Glauber dynamics should hold throughout $\beta<1$~\cite{anari2021entropic}.  Here, we obtain MLSI constant of order $1/n$ at every fixed inverse temperature when the external field is sufficiently large at every site and the average field is $O(1)$.

\begin{corollary}\label{corollary:SK-high-field-mlsi}
There is a universal constant $C>0$ such that, for every fixed $\beta\ge0$ and $H>0$, there exists $c_0=c_0(\beta,H)>0$ for which the following holds with probability $1-o(1)$ over $W$ as $n\to\infty$. Uniformly over $h\in\R^n$ satisfying
\[
    \min_{1\le i\le n}h_i\ge C\bigl(1+\beta\sqrt{\log(1+\beta)}\bigr)
    \qquad\textnormal{and}\qquad \frac1n\sum_{i=1}^n h_i\le H,
\]
 we have 
 \[
 \rho_{\mathrm{LS}}(\mu_{\beta W/2,h})\ge\frac{c_0}{n}.
 \]
 In particular, the discrete time Glauber dynamics mixes in $O_{\beta,H}(n\log n)$ time.
\end{corollary}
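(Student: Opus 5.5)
Corollary~\ref{corollary:SK-high-field-mlsi} follows by applying the second part of Corollary~\ref{cor:high-field-func-ineq} with $J=\beta W/2$, fixed $\eps=1/2$, and a $\delta=\delta(\beta)\in(0,1)$ chosen below. The required random matrix inputs hold with probability $1-o(1)$ and do not depend on $h$, so the conclusion is automatically uniform over $h$. There are three steps.

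\textbf{Step 1: full spectral width.} By standard GOE bounds, with high probability $\|W\|_{\op}\le 2.1$. Hence $\gamma=\omega_0(J)\le 2.1\beta$ and $\|J\|_{\op}\le 1.1\beta$, so every constant produced by the corollary depends only on $\beta$ and $H$.

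\textbf{Step 2: sparse spectral width (the main obstacle).} We need
\[
\omega_{n-\lfloor\delta n\rfloor}(J)\le \beta\max_{|T|\le\delta n}\|W_{T,T}\|_{\op}\le \tfrac1{16}.
\]
For a fixed set $T$ of size $k$, the submatrix $W_{T,T}$ is $\sqrt{k/n}$ times a GOE$(k)$ matrix. Gaussian concentration then gives
\[
\|W_{T,T}\|_{\op}\le 2.1\sqrt{k/n}+t \quad\text{with probability at least } 1-e^{-nt^2/4}.
\]
Take a union bound over the $\binom{n}{k}\le e^{k\log(en/k)}$ subsets of size $k\le\delta n$, with $t=C'\sqrt{\delta\log(e/\delta)}$. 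This gives, with high probability,
\[
\max_{|T|\le\delta n}\|W_{T,T}\|_{\op}\le C''\sqrt{\delta\log(e/\delta)}.
\]
Choose $\delta$ so that $\beta C''\sqrt{\delta\log(e/\delta)}\le 1/16$. It suffices to take $\delta\asymp 1/(\beta^2\log(1+\beta))$ for large $\beta$ and $\delta$ a small constant for $\beta\lesssim1$. This choice is where the factor $\beta\sqrt{\log(1+\beta)}$ in the hypothesis comes from.

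\textbf{Step 3: drift conditions.} We need
\[
\sum_i\phi\big(h_i+\beta\textstyle\sum_{j\ne i}W_{ij}-8\gamma\big)\le c\delta n
\quad\text{and}\quad
\sum_i\psi\big(h_i+\beta\textstyle\sum_{j\ne i}W_{ij}\big)\le c\delta n.
\]
Write $g_i:=\sum_{j\ne i}W_{ij}$. Each $g_i$ is approximately $\cN(0,1)$, and $\frac1n\sum_i\one\{|g_i|>s\}\le 4e^{-s^2/4}$ with high probability. To see this, note $\|g\|_2^2=\|(W-\diag W)\mathbf1\|^2\approx n$, or more concretely, for fixed $S$ with $|S|=m$ the sum $\sum_{i\in S}g_i$ is Gaussian with variance $O(m)$; a union bound over $S$ then controls the empirical tail. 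We use $\psi$ rather than $\phi$ since $\psi\ge\phi$ up to constants and $\psi(u)\lesssim (1+|u|)e^{-2u}$ for $u\ge0$, while $\psi\le\log 4+1$ always.

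Split the indices $i$ into two classes:
\begin{itemize}
\item When $|g_i|\le s:=C_1\sqrt{\log(1/\delta)}$, the argument is at least $h_i-\beta s-16.8\beta$. With $h_i\ge C(1+\beta\sqrt{\log(1+\beta)})$ and $\log(1/\delta)\asymp\log(1+\beta)$, this is at least $\tfrac12\log(1/\delta)+C_2$, so each such term contributes $\lesssim \delta\log(1/\delta)e^{-2C_2}\le c\delta/2$ after enlarging $C$.
\item The remaining at most $4e^{-s^2/4}n\le (c/2C_3)\delta n$ indices contribute $O(1)$ each.
\end{itemize}

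Finally, the average-field bound $\frac1n\sum_i h_i\le H$ is assumed directly. Corollary~\ref{cor:high-field-func-ineq} then yields $\rho_{\mathrm{LS}}\ge c_0(\beta,H)/n$. The mixing claim follows from \eqref{eq:mixing-time-bounds}, using $\log\log(1/\nu_{\min})=O(\log n)$ because $\log(1/\nu_{\min})=O(n(\beta+H+\max_i h_i))$. If $\max_i h_i$ is unbounded, use $\log(1/\nu_{\min})\le \sum_i h_i+O(n\beta)\le n(H+O(\beta))$, which holds since $h_i\ge0$.
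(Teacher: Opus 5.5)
Your route is essentially the paper's. You apply the second part of Corollary~\ref{cor:high-field-func-ineq} with a fixed $\eps$, and you get the sparse spectral width from a norm bound on principal submatrices of the GOE. Your concentration-plus-union-bound derivation stands in for the paper's citation of \eqref{eq:z2-gaussian-width}, and keeping the diagonal of $W$ is harmless. You take $\delta$ polynomially small in $1/(1+\beta)$ and check the drift sums using that the row sums $g_i$ are approximately standard Gaussian; this is what forces $h_0\gtrsim\beta\sqrt{\log(1/\delta)}\asymp\beta\sqrt{\log(1+\beta)}$.

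However, Step~3 rests on a false claim: $\psi$ is \emph{not} bounded. As $u\to-\infty$ one has $\phi(u)\to1$ and $1-\phi(u)\sim e^{2u}$, so $\psi(u)\sim 2|u|$. The paper in fact uses $\psi(u)\ge(-u)_+$ in the proof of Corollary~\ref{cor:high-field-func-ineq}. This matters exactly in your tail class. Since $\max_i|g_i|\approx\sqrt{2\log n}$, for fixed $\beta>0$ and $h_i$ at the threshold there are indices with $h_i+\beta g_i-8\gamma\to-\infty$. Their $\psi$-values are of order $\beta\sqrt{\log n}$, not $O(1)$. Counting the tail indices is therefore not enough; you also need a first-moment bound on the tail of $g$.

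The repair is short. Since $\psi(0)=\log 2$ and $0\le-\psi'\le 3$, you have $\psi(u)\le\log 2+3(-u)_+$. Once $C$ is large, $h_0\ge 16.8\beta\ge 8\gamma$, so each tail index contributes at most $\log 2+3\beta|g_i|$ to $\sum_i\psi(h_i+\beta g_i-8\gamma)$. Put $N_s:=|\{i:|g_i|>s\}|$ and note $\|g\|_2=\|\widetilde W\1\|_2\le\|\widetilde W\|_{\op}\sqrt n=O(\sqrt n)$. Cauchy--Schwarz then gives $\sum_{i:|g_i|>s}|g_i|\le\sqrt{N_s}\,\|g\|_2\lesssim e^{-s^2/8}n$. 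Enlarging $C_1$ (and then $C$) makes $3\beta\sum_{i:|g_i|>s}|g_i|\le c\delta n/4$. This only requires $s^2\gtrsim\log((1+\beta)/\delta)$, and $\log((1+\beta)/\delta)\lesssim\log(1/\delta)$ for your choice of $\delta$.

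Alternatively, argue as the paper does. The function $\psi$ is $3$-Lipschitz and $(g_i)_i\stackrel{d}{=}(\sqrt{(n-2)/n}\,Z_i+Z_0/\sqrt n)_i$. The law of large numbers then gives $\frac1n\sum_i\psi(h_0-8K\beta+\beta g_i)\to\E[\psi(h_0-8K\beta+\beta Z)]$, with $K$ as in \eqref{eq:z2-gaussian-width}. This is a finite Gaussian expectation, so the linear growth of $\psi$ is harmless, and it is controlled by $\E[\psi(u+\beta Z)]\lesssim e^{-u/2}+\beta e^{-u^2/(8(1+\beta^2))}$. Monotonicity of $\psi$ then gives uniformity over $h$ exactly as in your argument. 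With either repair, the rest of your proof goes through, including the mixing-time deduction.
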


    In concurrent work,  Bandeira, El Alaoui, and R{\"o}dder \cite{BER26} establish polynomial-time mixing of Glauber dynamics for the (unconditioned) Ising model $\mu_{J,h}$ when  
    \[
    \max_{T\subseteq [n]:|T|\leq \delta n}\|J_{T,T}\|_{\op}<\frac{1}{10}
    \]
    in our normalization and $h_i= H$ for all $i$, where $H$ is sufficiently large depending on $\delta$ and $\|J\|_{\op}$. For zero-diagonal $J$, the operator norm and spectral width of each principal submatrix differ by at most a factor of two. Thus, the above condition is equivalent, up to universal constants, to our sparse spectral width condition $8\omega_M(J)<1-\eps$. In the terminology of~\cite{ChenEldan}, their proof uses coordinate-by-coordinate localization on $\Omega$, whereas ours uses linear-tilt stochastic localization on $\widehat\Omega_M$; see Section~\ref{sec:proof-technique} for a proof overview. Theorem~\ref{theorem:z2-synchronization} answers the open question in \cite{BER26} about Glauber mixing for $\mathbb Z_2$ synchronization at large $\lambda$, through quasi-equilibration from sufficiently biased initializations.

\subsection{Ferromagnetic Ising model on random regular graphs}
The ferromagnetic Ising model on random regular graph is another model that exhibits phase ordering at low temperature. Let $\mathcal G_d(n)$ be the uniform distribution on simple $d$-regular
graphs on $n$ vertices with $nd$ even. For $G\sim \mathcal G_d(n)$, consider the ferromagnetic Ising
measure
\begin{equation}\label{eq:rrg-ising}
 \pi_{G,\beta}(\sigma)
 \propto\exp\Big(\beta\sum_{i\sim j}\sigma_i\sigma_j\Big),
     \qquad \sigma\in\Omega\equiv \{-1,+1\}^n.
\end{equation}
The high-temperature regime for this model is $(d-1)\tanh\beta<1$, where Mossel and Sly~\cite{MS} established rapid mixing of Glauber dynamics. Beyond this regime, Bauerschmidt, Bodineau, and Dagallier~\cite{BaBoDa-Kawasaki} proved rapid mixing of Kawasaki dynamics for $\beta<1/(8\sqrt{d-1})$. More recently, Gheissari, Sly, and Sohn~\cite{GheissariSlySohn} established rapid phase ordering for Glauber dynamics from sufficiently biased initializations for all $\beta\gtrsim \log d/d$, using a temporal recursion to control minus-spin clusters in spacetime.

As discussed in~\cite{GheissariSlySohn}, the extra factor $\log d$, compared with the critical inverse temperature $\operatorname{arctanh}
(1/(d-1))=(1+o_d(1))/d$, arises from their proof method of controlling the of minus-spin spacetime clusters through a dominating dynamics. Our next result removes this $\log d$ factor, establishing rapid phase ordering down to $\beta\asymp 1/d$. Let $m(\sigma):=n^{-1}\sum_{i=1}^n\sigma_i$ denote the magnetization, and write
\[ 
    \pi_{G,\beta}^+:=\pi_{G,\beta}(\,\cdot\mid\Omega^+),\quad\textnormal{where}\quad \Omega^+:=\{\sigma\in\Omega:m(\sigma)\ge0\}.
\]
Let $(X_t)_{t\in\mathbb Z_{\geq0}}$ be the discrete-time Glauber dynamics for $\pi_{G,\beta}$.

\begin{theorem}\label{theorem:rrg-ordering}
There are universal constants $d_0,c_0>0$ such that the following holds. Suppose that $d\ge d_0$ and $\frac{c_0}{d}\le\beta\le\frac1{16\sqrt d}$.
Let $M$ be the smallest integer with $M\equiv n\pmod2$ and
$M\ge3\beta n/2$. There exists $C=C(\beta,d)>0$ such that,
if $G\sim\mathcal G_d(n)$, with probability $1-o(1)$ as
$n\to\infty$, the discrete-time Glauber dynamics on $\widehat\Omega_M$ satisfies
\begin{equation}\label{eq:rrg-basin-mlsi}
    \rho_{\mathrm{LS}}\bigl(
        \pi_{G,\beta}(\,\cdot\mid\widehat\Omega_M)
    \bigr)\ge\frac{1}{Cn}.
\end{equation}
Moreover, with probability $1-o(1)$, we have for every $X_0$ with $m(X_0)\ge2\beta$,
\begin{equation}\label{eq:rrg-phase-ordering}
    \bigl\|\PP_{X_0}(X_t\in\cdot)-\pi_{G,\beta}^+\bigr\|_{\TV}\le n^{-10},\qquad Cn\log n\le t\le e^{n/C},\quad t\in\mathbb Z_{\ge0}.
\end{equation}
\end{theorem}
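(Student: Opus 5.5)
Write $J=\frac{\beta}{2}A$, where $A$ is the adjacency matrix of $G$ and $h=0$, so that $\pi_{G,\beta}=\mu_{J,0}$. The plan has three parts: apply Theorem~\ref{theorem:core-func-ineq}(b) on the core $\widehat\Omega_M$ with $M\approx 3\beta n/2$; use the lifting drift~\eqref{assumption:tail-stability} (or a direct coupling argument) to bring the dynamics from $m(X_0)\ge 2\beta$ into the core; and use a bottleneck argument to show that the core carries almost all of the mass of $\pi^+_{G,\beta}$.

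\textbf{Step 1: sparse spectral width.} We need $8\omega_M(J)\le 1-\eps$, where $|T|\le n-M\le n$. The trivial bound is $\|A_{T,T}\|_{\op}\le d$, which is far too weak. For the window $|T|\le n$ we can however use Friedman's theorem: with probability $1-o(1)$, the nontrivial eigenvalues of $A$ lie in $[-2\sqrt{d-1}-o(1),\,2\sqrt{d-1}+o(1)]$. Write $A=\frac dn\1\1^\top+A'$ with $\|A'\|_{\op}\le 2\sqrt{d-1}+o(1)$. Then for every $T$,
\[
\lambda_{\max}(J_{T,T})-\lambda_{\min}(J_{T,T})\le \tfrac{\beta}{2}\Bigl(\tfrac{d|T|}{n}+4\sqrt d+o(1)\Bigr).
\]
With $|T|\le n-M$ this is at most $\frac\beta2\bigl(d(1-\tfrac{3\beta}{2})+4\sqrt d\bigr)$, which still contains a term of order $\beta d$.

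This shows the rank-one part must be handled differently. The first fix to try is to absorb it as a field: $\frac{\beta d}{2n}(\sum_i\sigma_i)^2$ is a function of magnetization alone. Alternatively, one can note that the spectral independence argument (Proposition~\ref{prop:sparse-covariance}) is applied to submatrices of size at most $n-M$, and that the rank-one direction contributes a nonnegative PSD term. A PSD rank-one term shifts $\lambda_{\max}$ but is harmless in the Chen--Eldan decomposition, which requires only an upper bound on $J-\lambda_{\min}I$ in directions of positive curvature. I expect this step to be the main obstacle: the stated range $\beta\le 1/(16\sqrt d)$ corresponds to $8\cdot\frac\beta2\cdot 4\sqrt d\le 1-\eps$, which matches applying the sparse-width condition to $A'$ only. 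So I would either reprove Theorem~\ref{theorem:core-func-ineq} with the ferromagnetic rank-one part excluded from $\omega_M$, or verify that the constant-vector component restricted to $\widehat\Omega_M$ can be treated via the tilt in $h$.

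\textbf{Step 2: drift conditions at $\mathbf 1$.} At $\sigma=\mathbf 1$ every local field is $2\sum_j J_{ij}=\beta d$. Hence
\[
\sum_i\psi(\beta d)\le n\,\psi(c_0)\lesssim n\,c_0e^{-2c_0}.
\]
This is at most $\delta_*R_M=\delta_*(n-M)/2\approx\delta_* n/2$ once $c_0$ is a large universal constant. This gives~\eqref{assumption:stability-entropy}, and with Step~1, Theorem~\ref{theorem:core-func-ineq}(b) yields~\eqref{eq:rrg-basin-mlsi}; the factor $e^{-C\|J\|_{\op}/\eps}$ is at least $e^{-C\beta d}$, which is a constant depending on $(\beta,d)$.

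\textbf{Step 3: phase ordering.} For $\sigma$ with $m(\sigma)\in[3\beta/2\text{-ish},\,2\beta]$ (and more generally for $m(\sigma)\ge 3\beta/2$), expander mixing gives typical local fields $\approx\beta d\,m(\sigma)\ge 3c_0/2$ for most vertices. Consequently $q_+-q_-\gtrsim(1-m)$, and the magnetization is a supermartingale toward the core. A coupling argument as in~\cite{GheissariSlySohn} then shows that the unrestricted chain started from $m(X_0)\ge 2\beta$ enters $\widehat\Omega_M$ within $O(n)$ steps and afterwards stays coupled with the reflected chain up to time $e^{n/C}$, since exiting requires a large deviation of a drifted walk.

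Combining this with the mixing bound~\eqref{eq:mixing-time-bounds} (where $\log\log(1/\nu_{\min})=O(\log n)$) gives mixing to $\pi_{G,\beta}(\cdot\mid\widehat\Omega_M)$. Finally, $\pi_{G,\beta}(\Omega^+\setminus\widehat\Omega_M\mid\Omega^+)\le e^{-n/C}$ follows from the same drift via a Lyapunov/Cheeger bottleneck estimate, so $\mathrm{TV}\bigl(\pi_{G,\beta}(\cdot\mid\widehat\Omega_M),\pi^+_{G,\beta}\bigr)\le n^{-10}$.
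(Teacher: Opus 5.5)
\textbf{There is a genuine gap in Step~1, and the fixes you suggest cannot close it.}

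On the core $\widehat\Omega_M$ with $M\approx 3\beta n/2$, the sparse width $\omega_M(J)$ involves sets $T$ of size up to $n-M\approx n$. So the rank-one part of $J=\frac\beta2 A_G$ contributes $\frac{\beta d|T|}{2n}$, which is of order $\beta d\ge c_0$, and $8\omega_M(J)\le 1-\eps$ fails.

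\begin{itemize}
\item The term $\frac{\beta d}{2n}(\sum_i\sigma_i)^2$ is quadratic and not constant on $\widehat\Omega_M$, so it cannot be absorbed into $h$.
\item It is not harmless in the Bauerschmidt--Bodineau/Chen--Eldan step either. In Proposition~\ref{prop:sparse-covariance}, the law of $X_B$ given $(B,X_A)$ has interaction $2J_{B,B}$. Its rank-one part has norm $\beta d|B|/n$, which is unbounded when $|B|$ is linear in $n$.
\item The needed covariance hypothesis is in fact false on this core, not merely unproven. Tilt by a uniform field $h\1$ with $h\approx\log 2-\beta d/2$. This balances the mass near $m=3\beta/2$ against the mass near $m=1$, with an exponentially deep valley between them. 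Hence $\|\Cov(\mu_{J,h\1}(\cdot\mid\widehat\Omega_M))\|_{\op}$ is of order $n$.
\item Localizing only the non-rank-one part does not help. The time-$1$ measure is then a conditioned Curie--Weiss model rather than a product, so Lemma~\ref{lemma:general-boundary-decomp} is unavailable, and the inequality fails for the same bimodality reason.
\end{itemize}

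\textbf{The missing idea is to prove the core inequality on a much smaller core and then lift it.} Set $\kappa=\beta d$, $R=\lfloor ne^{-\kappa}\rfloor$ and $L=n-2R$. Configurations in $\widehat\Omega_L$ have at most $2R$ minus spins.
\begin{itemize}
\item Lemma~\ref{lemma:rrg-spectral-geometry} then gives $8\omega_L(J)\le 8\kappa e^{-\kappa}+16\beta\sqrt d\,(1-\frac1{4d})\le 1-\frac1{8d}$. This is where $\beta\le 1/(16\sqrt d)$ enters, as you guessed.
\item Condition \eqref{assumption:stability-entropy} still holds at cutoff $L$, since $n\psi(\kappa)\le(2\kappa+1)ne^{-2\kappa}\le\delta_*R$.
\item Then apply Theorem~\ref{theorem:lifting}(b) from $\widehat\Omega_L$ to $\widehat\Omega_M$. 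This requires \eqref{assumption:tail-stability} on the whole band $3\beta/2\le m\le 1-2e^{-\kappa}$, with $\zeta$ depending on $\beta,d$. It also requires the trivial bound \eqref{assumption:H4} with $\eta=\kappa+\log 2$, and $L\le(1-e^{-\kappa})n$.
\end{itemize}
So your Step~3 drift must feed into the lifting theorem, not only into a coupling. Note also that $m(X_0)\ge2\beta$ already lies inside $\widehat\Omega_M$.

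\textbf{Your heuristic for the drift is also off.} At $m=3\beta/2$ the mean local field is $\frac32\beta^2 d\le\frac3{512}$, not $\ge\frac32 c_0$. The drift there is therefore only of order $\beta^2 d$. It comes from $\tanh x\ge x-\frac3{10}x^2$ combined with $\frac1n\sum_i(H_i-\kappa u)^2\le4\beta^2d(1-u^2)$, where $H_i=\beta\sum_{j\sim i}\sigma_j$ and $u=m(\sigma)$. This balance is what forces the threshold $m\asymp\beta$.

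\textbf{Finally, the mass bound cannot come from ``the same drift''.} No drift estimate is available on $0\le m<3\beta/2$. Instead bound $\pi^+_{G,\beta}(\widehat\Omega_M^c)$ by edge expansion. Such $\sigma$ have $n/3\le|T|\le n/2$, hence $|\partial T|\ge dn/6$, and so $\pi_{G,\beta}(\sigma)/\pi_{G,\beta}(\1)=e^{-2\beta|\partial T|}\le e^{-2n}$.
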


Note that Theorem~\ref{theorem:rrg-ordering} permits an initial bias of order $\beta$ throughout $c_0/d\le\beta\le1/(16\sqrt d)$. In~\cite[Theorem~2]{GheissariSlySohn}, the sufficient bias is of order $\max\{d^{-1/2},(\beta d)^{-1}\}$; our bound improves upon this bias when $\beta\le cd^{-1/2}$ for sufficiently small $c>0$. The improvement follows from the refined magnetization drift estimate in Section~\ref{sec:applications}, which verifies the inward drift condition~\eqref{assumption:tail-stability}. It would be interesting to determine whether these bias scales are optimal, by constructing initializations with bias $C\beta$, or $C/\lambda$ in Theorem~\ref{theorem:z2-synchronization}, that fail to quasi-equilibrate rapidly to the favored phase.

\section{Proof overview}\label{sec:proof-technique}
This section proves Theorem~\ref{theorem:core-func-ineq} with intermediate steps deferred to Sections~\ref{sec:spectral-independence}-\ref{sec:boundary}. We then deduce Theorem~\ref{theorem:z2-synchronization} from Theorem~\ref{theorem:core-func-ineq} and Theorem~\ref{theorem:lifting}, deferring the proof of the required estimate, Proposition~\ref{prop:z2-inputs}, to Section~\ref{sec:applications}. The proof of Theorem~\ref{theorem:lifting} is in Section~\ref{sec:lifting}, while the proofs of Corollaries~\ref{cor:high-field-func-ineq}, \ref{corollary:SK-high-field-mlsi}, and Theorem~\ref{theorem:rrg-ordering} are in Section~\ref{sec:applications}.
\subsection{Proof of the core functional inequalities}

The starting point of our proof of Theorem~\ref{theorem:core-func-ineq} is the following result, which is  a large-magnetization conditioned analog of \cite[Lemma 5.2]{ChenEldan}. Its proof in Section~\ref{sec:spectral-independence} combines
the decomposition obtained in~\cite{BB19} with the method of ``duplicate''
variables~\cite{percus1975correlation, lebowitz1974ghs}, where the sparse spectral width condition $8\omega_M(J)\leq1-\eps$
controls the interaction on the disagreement set of two i.i.d. samples;
see~\eqref{eq:conditional:law:XB:given:XA}. Recall $\mu_{J,h,M}(\cdot)=\mu_{J,h}(\cdot\mid \widehat{\Omega}_M)$ and $\widehat\Omega_M=\{\sigma: \sum_{i}\sigma_i\geq M\}$.

\begin{proposition}\label{prop:sparse-covariance}
Assume that $8\omega_M(J)\le1-\eps$ for some $0\leq M\leq n-2$ and $\eps\in(0,1)$. Then, for every $h\in\R^n$,
\[
    \|\Cov(\mu_{J,h,M})\|_{\mathrm{op}} \leq \frac{2}{\eps} 
\]
\end{proposition}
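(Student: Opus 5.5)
The plan is to express $\Var_{\mu_{J,h,M}}(\langle v,\sigma\rangle)$, for a unit vector $v$, through two i.i.d.\ samples $\sigma,\tau\sim\mu_{J,h,M}$ (the duplicate variables), via
\[
\Var(\langle v,\sigma\rangle)=\tfrac12\,\E\bigl[\langle v,\sigma-\tau\rangle^2\bigr].
\]
Let $D:=\{i:\sigma_i\ne\tau_i\}$ and $A:=[n]\setminus D$. Each of $\sigma,\tau$ has at most $R_M$ minus spins, so $|D|\le 2R_M=n-M$. Hence the sparse width $\omega_M(J)$ controls $J_{D,D}$. Write $x:=\sigma_A=\tau_A$ and $s:=\sigma_D$, so that $\tau_D=-s$ and $\langle v,\sigma-\tau\rangle=2\langle v_D,s\rangle$.

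\textbf{Step 1: conditional law of $s$.} Condition on $(D,x)$ and expand the joint weight $\exp(\langle J\sigma,\sigma\rangle+\langle J\tau,\tau\rangle+\langle h,\sigma+\tau\rangle)$.
\begin{itemize}
\item The field contributes only through $x$, since $(\sigma+\tau)_D=0$.
\item The cross terms $\pm 2\langle x,J_{A,D}s\rangle$ cancel.
\item What remains in $s$ is $\exp(2\langle J_{D,D}s,s\rangle)$.
\item The two constraints $\sum\sigma_i\ge M$ and $\sum\tau_i\ge M$ become $|\sum_{i\in D}s_i|\le \sum_{i\in A}x_i-M=:k$.
\end{itemize}
The constraint $s_i\ne -s_i$ is automatic. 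So $s$ given $(D,x)$ has law
\[
\nu(s)\propto \exp\bigl(2\langle K s,s\rangle\bigr)\one_{C}(s),\qquad C:=\Bigl\{s:\Bigl|\sum_{i\in D}s_i\Bigr|\le k\Bigr\},
\]
where $K:=J_{D,D}-\lambda_{\min}(J_{D,D})I$. Replacing $J_{D,D}$ by $K$ is allowed because $\langle s,s\rangle=|D|$ is constant. This $K$ satisfies $0\preceq K\preceq \omega_M(J)I\preceq \tfrac{1-\eps}{8}I$. The measure $\nu$ is invariant under $s\mapsto -s$, so it has mean zero. Therefore
\[
\Var(\langle v,\sigma\rangle)=2\,\E\bigl[\langle v_D,\Cov(\nu)v_D\rangle\bigr]\le 2\sup_\nu\|\Cov(\nu)\|_{\op},
\]
and it suffices to show $\|\Cov(\nu)\|_{\op}\le 1/\eps$.

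\textbf{Step 2: Gaussian decomposition.} Following \cite{BB19}, I would use Hubbard--Stratonovich to write
\[
\exp\bigl(2\langle Ks,s\rangle\bigr)=\E_{y\sim\cN(0,4K)}\bigl[e^{\langle y,s\rangle}\bigr].
\]
This makes $\nu$ a mixture of tilted measures $\nu_y(s)\propto e^{\langle y,s\rangle}\one_C(s)$. The standard renormalization argument then gives $\Cov(\nu)=\E\Cov(\nu_y)+\Cov(\text{mean of }\nu_y)$. Combined with the Gaussian integration by parts identity for the mixing law, this yields
\[
\Cov(\nu)\preceq \bigl(\Sigma^{-1}-4K\bigr)^{-1},\qquad \Sigma:=\sup_y \Cov(\nu_y).
\]
This is the analogue of \cite[Lemma 5.2]{ChenEldan}, where one would alternatively interpolate along $t\mapsto \nu_t\propto e^{2t\langle Ks,s\rangle}\one_C$ and integrate a differential inequality for $\Cov(\nu_t)$. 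Given $\Sigma\preceq 2I$, we get $\Sigma^{-1}-4K\succeq \tfrac12 I-\tfrac{1-\eps}{2}I=\tfrac{\eps}{2}I$. Hence $\|\Cov(\nu)\|_{\op}\le 2/\eps$. I would then track constants, in particular the factor $\tfrac12$ in the duplicate identity, to land at the stated $2/\eps$; the target is $\Sigma\preceq I$ if achievable.

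\textbf{Step 3 (main obstacle): uniform covariance bound for tilted measures.} Each $\nu_y$ is a product measure $\bigotimes_i \mathrm{Ber}_\pm(\tanh y_i)$ conditioned on $\sum_i s_i$ lying in an interval. I would argue via strong Rayleigh / negative dependence. Product measures are strongly Rayleigh, and conditioning on the sum lying in an interval preserves negative association, because it is a truncation of the rank sequence, which is ultra log-concave. Negative association gives $\Cov(s_i,s_j)\le 0$ for $i\ne j$. For any unit $u$ I would then bound $\Var(\langle u,s\rangle)$ using the decomposition along $\mathbf 1$ and its complement, together with the fact that conditioning a sum of independent variables to an interval does not increase its variance beyond that of the unconditioned product. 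I expect this step to be the delicate one. Negative off-diagonal covariances alone only give $\|\Cov\|_{\op}\le 2\max_i\Var(s_i)\le 2$, which suffices up to constants, so the fallback is to accept $\Sigma\preceq 2I$. To reach exactly $2/\eps$, I would first try proving $\Cov(\nu_y)\preceq \Cov(\text{product})\preceq I$ directly via the strong Rayleigh property of the truncated measure.
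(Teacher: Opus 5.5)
Your architecture matches the paper's: duplicate replicas, the conditional law of the disagreement spins as an Ising measure with interaction $2J_{D,D}$, zero field and the symmetric constraint $|\sum_{i\in D}s_i|\le k$, then the Bauerschmidt--Bodineau covariance transfer with $L=2$ from negative correlation. The gap is in the constant.

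As written, your argument proves only $\|\Cov(\mu_{J,h,M})\|_{\op}\le 4/\eps$. In Step 1 you bound $\E[\langle v_D,\Cov(\nu)v_D\rangle]\le\sup\|\Cov(\nu)\|_{\op}$ using $\|v_D\|_2\le1$, which leaves you needing $\|\Cov(\nu)\|_{\op}\le1/\eps$. That target, and the proposed route to it ($\Sigma\preceq I$), is false, so $L=2$ is sharp. Take $\sigma,\tau$ with exactly $R_M$ minus spins each, differing in two coordinates; this is possible since $M\le n-2$. Then $k=0$, $|D|=2$, and $\nu$ is uniform on $\{(+1,-1),(-1,+1)\}$ for every $J$. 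Its covariance $\bigl(\begin{smallmatrix}1&-1\\-1&1\end{smallmatrix}\bigr)$ has eigenvalue $2>1/\eps$ whenever $\eps>1/2$, e.g.\ for $J=0$. The same example already violates $\Cov(\nu_y)\preceq\Cov(\text{product})$ at $y=0$, so the strong Rayleigh sharpening in Step 3 cannot work. More generally, the uniform measure on $\{s\in\{\pm1\}^m:\sum_i s_i=0\}$ has top eigenvalue $m/(m-1)>1$. The factor $\tfrac12$ in the duplicate identity does not help either: it is already spent in producing the prefactor $2$.

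The missing factor comes from the size of the disagreement set, which is how the paper concludes. Keep $\E\|v_D\|_2^2$ instead of bounding it by $1$. Since the replicas are independent, $\P(i\in D)=2p_i(1-p_i)\le\tfrac12$ with $p_i:=\P(\sigma_i=1)$. Hence $\E\|v_D\|_2^2=\sum_i v_i^2\,\P(i\in D)\le\tfrac12$. Together with $\|\Cov(\nu)\|_{\op}\le 2/\eps$ from $L=2$ and $8\|K\|_{\op}\le1-\eps$, this gives $\Var(\langle v,\sigma\rangle)\le 2\cdot\tfrac{2}{\eps}\cdot\tfrac12=\tfrac{2}{\eps}$.

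The weaker $4/\eps$ would still serve every downstream use in the paper. It only enters through $\alpha(t)$ in an exponent carrying unspecified universal constants. It does not, however, prove the proposition as stated.
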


   We then apply the stochastic localization scheme by Eldan~\cite{eldan2013thin}, in which this covariance norm bound provides the \emph{approximate conservation of variance and entropy} (cf. Proposition~\ref{fact:EC-conservation}). For the sake of the exposition, let $\mu$ be a probability measure supported on $\Omega\equiv \{-1,+1\}^n$, and define the measure-valued process $(\nu_t)_{t\geq 0}$ by the following system of SDE:
        \begin{equation}\label{eq:def:SL}
        d\nu_t(x)=\nu_t(x)\big\langle x- b(\nu_t), C_t dB_t\big\rangle\,,\qquad\forall x\in \Omega\,,\qquad \nu_0=\mu.
        \end{equation}
     Here, $(B_t)_{t\geq 0}$ is a standard Brownian motion on $\R^n$, $b(\nu_t)=\sum_{x\in \Omega} x \nu_t(x)$ is the barycenter, and $(C_t)_{t\geq 0}$ is an adapted process of $n\times n$ positive semi-definite matrices. The uniqueness and existence of $(\nu_t)_{t}$ was established in \cite{eldan2013thin, eldan20clt}. By Ito's formula (see \cite[Fact~2.14]{ChenEldan}), $\nu_t(\cdot)$ is a probability measure on $\Omega$, which tilts $\mu$ by a random quadratic factor
        \begin{equation} \label{eq:SL}
          \nu_t(x)\propto e^{-\frac{1}{2}\langle x, Q_tx\rangle + \langle h_t,x\rangle}\mu(x)\,,\quad Q_t=\int_{0}^{t}C_s^2 ds\,,\quad h_t=\int_{0}^{t} \Big(C_s dB_s+C_s^2 b(\nu_s) ds\Big).
        \end{equation}
       For $\mu=\mu_{J,h}$ with $J\succ0$, choosing $C_t\equiv(2J)^{1/2}$ gives $\nu_t=\mu_{(1-t)J,h+h_t}$. At time $t=1$, the quadratic interaction is removed, so $\nu_1$ is a product measure. It therefore satisfies tensorization of variance and entropy~\cite{CMT15}, and hence Poincar\'{e} and MLSI \emph{almost surely} (i.e. pathwise) in the external field $h+h_1$. This interpolation has been central to recent advances on the mixing of Glauber dynamics for Ising models (see, e.g.,~\cite{AKV24, caputo2025factorizations,chen2025rapidthreshold}).

     To establish Poincar\'{e} and MLSI for $\mu_{J,h,M}$, a natural approach is to apply stochastic localization on $\widehat\Omega_M$, interpolating $\mu_{J,h,M}$ with a product measure conditioned on $\widehat\Omega_M$. However, as~\eqref{eq:failure:uniform:gap} shows, these conditioned product measures \emph{do not} satisfy tensorization of variance uniformly in the external fields. Thus, the functional inequalities at $t=1$ needed for the pathwise localization argument~\cite{Eldan2023analysis,ChenEldan} need not hold almost surely with uniform constants. Even a high-probability bound would leave a difficulty as these arguments require control of variance and entropy in expectation over the localization process, uniformly over $f:\widehat\Omega_M\to \R$, whereas exceptional paths may contribute substantially to these expectations for some $f$.

   To address this difficulty, we consider the localization process $\nu_t$ on the ambient space $\Omega$ together with its restriction $\widehat\nu_t:=\nu_t(\cdot\mid\widehat\Omega_M)$. As shown in Section~\ref{sec:localization}, under a change of probability law, $\widehat\nu_t$ satisfies the localization SDE with its own barycenter, starting from $\mu_{J,h,M}$. We then extend functions constantly outside $\widehat\Omega_M$ and apply tensorization of variance and entropy at $t=1$, obtaining inequalities with \textit{boundary terms}, which are defined below.

    To make this precise, we consider a general setting where $\nu$ is a given probability measure on $\Omega\equiv \{-1,+1\}^n$ and a subset $\widehat\Omega\subset \Omega$ is viewed as the core, with $\nu(\widehat\Omega)>0$. We are interested in obtaining functional inequalities for the conditioned measure $\widehat{\nu}:=\nu(\,\cdot\mid \widehat\Omega)$. For a function $f:\widehat\Omega\to \R$ and $a\in \R$, define the constant extension
    \begin{equation}\label{eq:constant:extension}
    f_a(\sigma)=
    \begin{cases}
        f(\sigma) & \sigma\in \widehat\Omega\\
        a & \sigma\notin \widehat\Omega.
    \end{cases}
    \end{equation}
Since $P_\nu$ and $P_{\widehat\nu}$ agree on flips within $\widehat\Omega$, reversibility and the definition of the Dirichlet form~\eqref{eq:dirichlet:form} give
    \[
    \begin{split}
    \mathcal{E}_{\nu}(f_a)
    &=\frac{1}{2}\sum_{\sigma, \tau \in \widehat\Omega}\nu(\sigma)P_{\nu}(\sigma, \tau)\big(f(\sigma)-f(\tau)\big)^2+\sum_{\sigma\in \widehat\Omega, \tau \notin \widehat\Omega}\nu(\sigma)P_{\nu}(\sigma, \tau)\big(f(\sigma)-a\big)^2\\
    &=\nu(\widehat\Omega)\bigg(\mathcal{E}_{\widehat{\nu}}(f)+\sum_{\sigma\in \widehat\Omega, \tau \notin \widehat\Omega}\widehat\nu(\sigma)P_{\nu}(\sigma, \tau)\big(f(\sigma)-a\big)^2\bigg), 
    \end{split}
    \]
    This motivates the \emph{quadratic boundary term} $\mathcal{Q}_{\nu}^{\widehat\Omega}(f)$ defined by
    \begin{equation}\label{eq:arbitrary-quadratic-boundary}
         \mathcal Q_{\nu}^{\widehat\Omega}(f):=\inf_{a\in\R}\mathcal Q_{\nu}^{\widehat\Omega}(f;a),\quad\textnormal{where}\quad\mathcal Q_{\nu}^{\widehat\Omega}(f;a):=\sum_{\sigma\in \widehat\Omega,\tau\notin \widehat\Omega}\widehat\nu(\sigma)P_{\nu}(\sigma, \tau)\big(f(\sigma)-a\big)^2.
    \end{equation}
    It is worth noting that, in general, $\mathcal{Q}_{\nu}^{\widehat\Omega}(f;a)$ is not determined by $\widehat\nu$ alone. Rather, it is a function of $\widehat\nu$ and one-step exit probabilities $P_\nu(\sigma,\tau)$, where $\sigma\in\widehat\Omega$ and $\tau\notin\widehat\Omega$. Moreover, this boundary form $\mathcal{Q}_{\nu}^{\widehat\Omega}(f;a)$ may be viewed as a penalty term, which penalizes deviations of $f$ from $a$, weighted by the one-step exit probabilities. Its role is analogous to that of the term $\|f\|_\infty^2$ appearing in weak Poincar\'{e} inequalities~\cite{rockner2001weak}, although here the penalty is confined to the boundary.
    
    Define the Dirichlet form with boundary by
    \[
    \mathcal E_\nu^\partial(f):=\inf_{a\in\R}\mathcal E_\nu^\partial(f;a),\quad\textnormal{where}\quad
    \mathcal E_\nu^\partial(f;a)
:=\mathcal E_{\widehat\nu}(f)+\mathcal Q_\nu^{\widehat\Omega}(f;a).
    \]
A crucial observation is that including the quadratic boundary term as above gives the uniform variance bound needed at $t=1$ in the localization argument: when $\nu$ is a \textit{product measure}, for every $a\in\R$,
    \begin{equation}\label{eq:conditioned:product:poincare}
 \Var_{\widehat\nu}(f)
\le\frac{\Var_\nu(f_a)}{\nu(\widehat\Omega)}
\le\frac{n\mathcal E_\nu(f_a)}{\nu(\widehat\Omega)}
=n\mathcal E_\nu^\partial(f;a).
    \end{equation}
    where the first inequality follows by law of total variance and the second follows since product measures satisfy tensorization of variance. Taking the infimum over $a\in\R$ gives $\Var_{\widehat\nu}(f)\le n\mathcal E_\nu^\partial(f)$. Thus, the conditioned product measure satisfies a uniform variance bound once the boundary term is included.

   For MLSI, we introduce an analogous \emph{entropic boundary term}. We work with the conditional entropy form: for a probability measure $\nu$ on $\Omega$, define
    \[
    \mathcal H_\nu(f):=\frac1n\sum_{i=1}^n\E_\nu\!\left[
        \Ent_{\nu(\cdot\mid\sigma_{-i})}(f)\right],\qquad f\geq0,
\]
where $\nu(\cdot\mid\sigma_{-i})$ denotes the conditional law of $\nu$ given all spins except the $i$'th spin. For $p\in[0,1]$ and $x,y\geq0$, write
    \[
        \ent_p(x,y):=px\log\bigg(\frac{x}{px+(1-p)y}\bigg)+(1-p)y\log\bigg(\frac{y}{px+(1-p)y}\bigg),
    \]
    where it is understood $0\log 0=0$. This is the entropy of a function taking values $x,y$ under probabilities $p$ and $1-p$. Thus, for $\sigma$ with $\nu(\sigma)>0$,
    \[
    \Ent_{\nu(\cdot\mid \sigma_{-i})}(f)=\ent_{\frac{\nu(\sigma)}{\nu(\sigma)+\nu(\sigma^{(i)})}}\Big(f(\sigma),f(\sigma^{(i)})\Big),
    \]
    where $\sigma^{(i)}$ is obtained by flipping the $i$th spin. The expectation $\E_{\nu}$ in $\mathcal H_\nu(f)$ averages this quantity over $\sigma$ distributed according to $\nu$. Moreover, since $\mathcal H_\nu(f)\le\mathcal E_\nu(f,\log f)$ for $f>0$ (see, e.g.,~\cite{CMT15}), proving the approximate tensorization of entropy $\Ent_\nu(f)\le Cn\mathcal H_\nu(f)$ suffices to establish MLSI. 
    
    For $f:\widehat\Omega\to[0,\infty)$ and $b\geq0$,
    define the entropic boundary term by
    \begin{equation}\label{eq:arbitrary-entropic-boundary}
       \mathcal B_\nu^{\widehat\Omega}(f)
        :=\inf_{b\ge0}\mathcal B_\nu^{\widehat\Omega}(f;b)\quad\textnormal{where}\quad \mathcal B_{\nu}^{\widehat\Omega}(f;b):=\frac1n\sum_{\substack{\sigma\in \widehat\Omega,\tau\notin \widehat\Omega\\\sigma\sim \tau}}
        \frac{\nu(\sigma)+\nu(\tau)}{\nu\big(\widehat\Omega\big)}
        \ent_{\frac{\nu(\sigma)}{\nu(\sigma)+\nu(\tau)}}\bigl(f(\sigma),b\bigr).
    \end{equation}
   Here and below, summands with $\nu(\sigma)=\nu(\tau)=0$ are understood to be $0$, and $\sigma\sim \tau$ denotes that $\tau$ is accessible from $\sigma$ by a single spin flip. Let
    \begin{align*}
         \mathcal H_\nu^\partial(f)
        :=\inf_{b\geq0}\mathcal H_\nu^\partial(f;b)\quad\textnormal{where}\quad\mathcal H_\nu^\partial(f;b)
        :=\mathcal H_{\widehat\nu}(f)+\mathcal B_\nu^{\widehat\Omega}(f;b).
    \end{align*}
    The following lemma gives the almost sure functional inequality for the stochastic localization process at $t=1$, the conditional product measure.
\begin{lemma}\label{lemma:general-boundary-decomp}
Let $\nu=\otimes_{i=1}^{n}\nu_i$ be a product probability measure on $\Omega=\{-1,+1\}^n$ and let $\widehat\Omega \subseteq \Omega$ be a subset with $\nu(\capOmega)>0$. Then for any $ f:\capOmega\to\R$, $\Var_{\widehat\nu}(f)\le n\mathcal E_\nu^\partial(f)$ holds. Moreover, for any $f: \capOmega\to [0,\infty)$,
\[
\Ent_{\widehat\nu}(f)\le n\mathcal H_\nu^\partial(f).
\]
\end{lemma}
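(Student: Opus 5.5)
The variance statement has essentially been proved in \eqref{eq:conditioned:product:poincare}, so the plan is to make that argument rigorous and then run the same scheme for entropy.

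\emph{Variance.} Fix $a\in\R$ and let $f_a$ be the constant extension \eqref{eq:constant:extension}. By the law of total variance with respect to the partition $\{\widehat\Omega,\widehat\Omega^c\}$,
\[
\Var_\nu(f_a)\ge\nu(\widehat\Omega)\Var_{\widehat\nu}(f).
\]
Next, tensorization of variance for the product measure $\nu$ gives
\[
\Var_\nu(g)\le\sum_i\E_\nu\bigl[\Var_{\nu_i}(g)\bigr].
\]
For a single site with flip partner $\sigma^{(i)}$, we have the identity $\Var_{\nu(\cdot\mid\sigma_{-i})}(g)=p(1-p)(g(\sigma)-g(\sigma^{(i)}))^2$. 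Summing over $\sigma$ produces exactly $n\mathcal E_\nu(g)$, since $\nu(\sigma)P_\nu(\sigma,\sigma^{(i)})=\frac1n\frac{\nu(\sigma)\nu(\sigma^{(i)})}{\nu(\sigma)+\nu(\sigma^{(i)})}$. The computation before \eqref{eq:arbitrary-quadratic-boundary} then gives
\[
\mathcal E_\nu(f_a)=\nu(\widehat\Omega)\,\mathcal E^\partial_\nu(f;a).
\]
Dividing by $\nu(\widehat\Omega)$ and taking the infimum over $a$ finishes this part. One point needs care: if some $\nu_i$ is degenerate, the terms with zero mass vanish on both sides, so nothing changes.

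\emph{Entropy.} Fix $b\ge0$ and extend $f$ by $f_b$. Entropy also satisfies a chain rule over the partition: writing $q=\nu(\widehat\Omega)$,
\[
\Ent_\nu(f_b)=q\Ent_{\widehat\nu}(f)+(1-q)\Ent_{\nu(\cdot\mid\widehat\Omega^c)}(b)+\ent_q\bigl(\widehat\nu[f],b\bigr)\ge q\,\Ent_{\widehat\nu}(f),
\]
because the entropy of a constant is zero and $\ent_q\ge0$. Tensorization of entropy for product measures gives
\[
\Ent_\nu(f_b)\le\sum_i\E_\nu\bigl[\Ent_{\nu(\cdot\mid\sigma_{-i})}(f_b)\bigr]=n\mathcal H_\nu(f_b).
\]
It remains to decompose $n\mathcal H_\nu(f_b)$ according to whether the edge $\{\sigma,\sigma^{(i)}\}$ stays inside $\widehat\Omega$, leaves it, or lies entirely outside.
\begin{itemize}
\item Edges entirely outside contribute $\ent(b,b)=0$.
\item For an edge with both endpoints in $\widehat\Omega$, the pair $\sigma,\sigma^{(i)}$ contributes $(\nu(\sigma)+\nu(\sigma^{(i)}))\ent_p(f(\sigma),f(\sigma^{(i)}))$. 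The conditional law of $\widehat\nu$ on that edge is the same two-point law, so after dividing by $q$ this reproduces $\mathcal H_{\widehat\nu}(f)$.
\item Each crossing edge $\sigma\in\widehat\Omega$, $\tau=\sigma^{(i)}\notin\widehat\Omega$ is counted once from $\sigma$ and once from $\tau$ in $\E_\nu$. Together these give $(\nu(\sigma)+\nu(\tau))\,\ent_{\nu(\sigma)/(\nu(\sigma)+\nu(\tau))}(f(\sigma),b)$, which after the factor $\frac1n$ and division by $q$ is exactly $\mathcal B^{\widehat\Omega}_\nu(f;b)$.
\end{itemize}
Hence $\Ent_{\widehat\nu}(f)\le n\mathcal H_\nu^\partial(f;b)$, and taking the infimum over $b$ gives the claim.

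The main obstacle is this bookkeeping: one must check that the normalizations in $\mathcal H_{\widehat\nu}$ (where $\E_{\widehat\nu}$ averages the edge entropy) and in $\mathcal B$ match the doubled edge sums without a stray factor of $2$. Zero-mass edges also need attention, and they are handled by the stated convention. The analogous check in the variance case is already done in the displayed Dirichlet-form identity, so I would mirror that calculation.
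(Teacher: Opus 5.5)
Your proposal is correct and takes the same approach as the paper. It applies the law of total variance (resp.\ the entropy chain rule) to the constant extension $f_a$ (resp.\ $f_b$), uses tensorization for the product measure $\nu$, and splits $n\mathcal H_\nu(f_b)$ edge by edge into $\nu(\widehat\Omega)\bigl(n\mathcal H_{\widehat\nu}(f)+n\mathcal B^{\widehat\Omega}_\nu(f;b)\bigr)$, with your normalization checks (each crossing pair counted twice in $\E_\nu$ but once in $\mathcal B$, and the $p(1-p)$ identity giving $n\mathcal E_\nu$) matching the paper's $\frac{1}{2n}\sum_{\sigma\sim\tau}$ bookkeeping.
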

\begin{proof}
The variance bound was proved in \eqref{eq:conditioned:product:poincare}. For entropy, consider $f:\widehat\Omega\to[0,\infty)$, $b\ge0$, and the constant extension $f_b:\Omega\to[0,\infty)$~\eqref{eq:constant:extension}. Note that
\[
\begin{split}
\mathcal H_\nu(f_b)
&=\frac{1}{2n}\sum_{(\sigma,\tau)\in \Omega^2, \sigma\sim \tau} (\nu(\sigma)+\nu(\tau))\ent_{\frac{\nu(\sigma)}{\nu(\sigma)+\nu(\tau)}}\big(f_b(\sigma),f_b(\tau)\big)
\\
&=\frac{1}{2n}\sum_{(\sigma,\tau)\in \widehat\Omega^2, \sigma\sim \tau} (\nu(\sigma)+\nu(\tau))\ent_{\frac{\nu(\sigma)}{\nu(\sigma)+\nu(\tau)}}\big(f(\sigma),f(\tau)\big)+\nu\big(\widehat\Omega\big)\mathcal{B}_{\nu}^{\widehat\Omega}(f;b).
\end{split}
\]
Since the first term in the right-hand side equals $\nu(\widehat\Omega)\mathcal{H}_{\widehat\nu}(f)$, we have $\mathcal{H}_{\nu}(f_b)=\nu(\widehat\Omega)\mathcal{H}_\nu^{\partial}(f;b)$. Thus, combining with total law of entropy and the tensorization of entropy for product measures~\cite{CMT15} yields
\[
\Ent_{\widehat\nu}(f)
\le\frac{\Ent_\nu(f_b)}{\nu\big(\widehat\Omega\big)}
\le\frac{n\mathcal H_\nu(f_b)}{\nu\big(\widehat\Omega\big)}
=n\mathcal H_\nu^\partial(f;b),
\]
and taking infimum over $b\geq 0$ concludes the proof.
\end{proof}

We remark that we could also consider a boundary term based on $\mathcal E_\nu(f,\log f)$, involving $(f(\sigma)-b)(\log f(\sigma)-\log b)$ for $f>0$ and $b>0$. The analog version of Lemma~\ref{lemma:general-boundary-decomp} (and also Proposition~\ref{prop:boundary-interaction-annealing} below) would still hold. However, the terms such as $\log f(\sigma)-\log b$ introduce complications for \emph{boundary absorption} step in Proposition~\ref{prop:defect-absorption} that we wish to avoid. For this step, $\mathcal{H}_{\nu}(f)$ is more convenient, which allows to use log-Sobolev inequalities for two point distributions~\cite{DSC96} to bound $\ent_p(x,y)$ in terms of $(\sqrt{x}-\sqrt{y})^2$. This allows to compare $\mathcal{B}(f)$ with $\mathcal{Q}(\sqrt{f})$; see Lemma~\ref{lemma:entropic-boundary-reduction}.

Motivated by Lemma~\ref{lemma:general-boundary-decomp}, we establish the following proposition in Section~\ref{sec:localization} using the localization scheme described above. This result generalizes \cite[Theorem 5.1]{ChenEldan}  to conditioned measures by including boundary terms and may be of independent interest.

\begin{proposition}\label{prop:boundary-interaction-annealing}
Let $\nu$ be a probability measure on $\Omega$, let $\widehat\Omega\subseteq\Omega$ satisfy $\nu(\widehat\Omega)>0$, and let $J\in\R^{n\times n}$ be symmetric positive definite. Write
$\widehat\nu:=\nu(\cdot\mid\widehat\Omega)$. For $t\in[0,1]$ and $h\in\R^n$, define
\begin{equation}\label{eq:nu:t:h}
    \nu_{t,h}(\sigma)\propto e^{-t\langle J\sigma,\sigma\rangle+\langle h,\sigma\rangle}\nu(\sigma),
    \qquad
    \widehat\nu_{t,h}:=\nu_{t,h}(\,\cdot\,|\widehat\Omega).
\end{equation}
Suppose that for some, $\alpha:[0,1]\to[0,\infty)$,
\begin{equation}\label{eq:annealing-covariance}
    \Cov(\widehat\nu_{t,h})\preceq\alpha(t)I_n,
    \qquad \forall t\in[0,1],\quad h\in\R^n.
\end{equation}
Then, we have the following.
\begin{itemize}
    \item[(a)] If for some $C_{\mathrm V}>0$, we have for every $h\in\R^n$ and every $f:\widehat\Omega\to\R$ that
    \[
     \Var_{\widehat\nu_{1,h}}(f)
        \le C_{\mathrm V}\mathcal E_{\nu_{1,h}}^\partial(f),
    \]
    then
    \begin{equation}\label{eq:annealing-variance-conclusion}
        \Var_{\widehat\nu}(f)\leq C_{\mathrm V}\exp\bigg( 2\|J\|_{\mathrm{op}}\int_0^1\alpha(t)dt \bigg)
        \mathcal E_\nu^\partial(f).
    \end{equation}

    \item[(b)] If for some $C_{\mathrm E}>0$, we have for every $h\in\R^n$ and every $f:\widehat\Omega\to[0,\infty)$ that
    \[
    \Ent_{\widehat\nu_{1,h}}(f)
        \le C_{\mathrm E}\mathcal H_{\nu_{1,h}}^\partial(f),
    \]
    then, 
    \begin{equation}\label{eq:annealing-entropy-conclusion}
        \Ent_{\widehat\nu}(f)\leq C_{\mathrm E}\exp\bigg( 2\|J\|_{\mathrm{op}}\int_0^1\alpha(t)dt \bigg)
        \mathcal H_\nu^\partial(f).
    \end{equation}
\end{itemize}
\end{proposition}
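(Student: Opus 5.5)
We will run Eldan's stochastic localization on the ambient space $\Omega$ with driving matrix $C_t\equiv(2J)^{1/2}$, started from $\nu_0=\nu$, so that by \eqref{eq:SL} we have $\nu_t=\nu_{t,h_t}$ for an adapted field process $(h_t)$. The conditioned process $\widehat\nu_t:=\nu_t(\cdot\mid\widehat\Omega)=\widehat\nu_{t,h_t}$ is not itself a localization process under the original law, because its increments involve $b(\nu_t)$ rather than $b(\widehat\nu_t)$. Our first step is therefore a Girsanov change of measure. Define $\widetilde B_t:=B_t-\int_0^t C_s(b(\widehat\nu_s)-b(\nu_s))\,ds$. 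An It\^o computation of $d\nu_t(\widehat\Omega)$ and of $d\widehat\nu_t(x)=d\big(\nu_t(x)/\nu_t(\widehat\Omega)\big)$ shows that, under the tilted law $d\widetilde{\P}\propto \nu_t(\widehat\Omega)\,d\P$ (so $\nu_t(\widehat\Omega)/\nu(\widehat\Omega)$ is the density martingale), the process $\widetilde B$ is a Brownian motion and
\[
d\widehat\nu_t(x)=\widehat\nu_t(x)\big\langle x-b(\widehat\nu_t),C_t\,d\widetilde B_t\big\rangle .
\]
Thus $\widehat\nu_t$ is a genuine linear-tilt localization of $\widehat\nu$, and in particular it is again of the form $\widehat\nu_{t,\widetilde h_t}$ for some adapted field $\widetilde h_t$.

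Next we run the standard Chen--Eldan approximate conservation argument for this process. For a fixed $f$, the process $t\mapsto\widehat\nu_t[f]$ is a $\widetilde{\P}$-martingale, and It\^o's formula gives
\[
d\,\Var_{\widehat\nu_t}(f)=-\big\|C_t\Cov_{\widehat\nu_t}(x,f)\big\|^2dt+d(\mathrm{mart}),
\]
with $\|C_t\Cov_{\widehat\nu_t}(x,f)\|^2\le 2\|J\|_{\op}\,\|\Cov(\widehat\nu_t)\|_{\op}\Var_{\widehat\nu_t}(f)\le 2\|J\|_{\op}\alpha(t)\Var_{\widehat\nu_t}(f)$ by \eqref{eq:annealing-covariance}. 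Grönwall in expectation then yields
\[
\Var_{\widehat\nu}(f)\le e^{2\|J\|_{\op}\int_0^1\alpha}\,\widetilde\E\,\Var_{\widehat\nu_1}(f).
\]
The entropic analogue $d\,\Ent_{\widehat\nu_t}(f)=-\tfrac12\,\frac{\|C_t\Cov(x,f)\|^2}{\widehat\nu_t[f]}\,dt+d(\mathrm{mart})$, combined with the covariance bound in the form used in \cite[Lemma~5.2/Thm.~5.1]{ChenEldan}, gives the same statement with $\Ent$ in place of $\Var$.

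At $t=1$ we apply the hypothesis pathwise with $h=\widetilde h_1$. This gives $\Var_{\widehat\nu_1}(f)\le C_{\mathrm V}\,\mathcal E^\partial_{\nu_1}(f)\le C_{\mathrm V}\,\mathcal E^\partial_{\nu_1}(f;a)$ for any fixed $a$, and we then choose $a$ to be the (deterministic) optimizer for $\mathcal E^\partial_\nu(f)$. It remains to show that $\widetilde\E\,\mathcal E^\partial_{\nu_1}(f;a)\le\mathcal E^\partial_\nu(f;a)$, and this is the key supermartingale step, which we expect to be the main obstacle. Writing $\mathcal E^\partial_{\nu_t}(f;a)=\mathcal E_{\nu_t}(f_a)/\nu_t(\widehat\Omega)$, the change of measure turns $\widetilde\E[\,\cdot\,/\nu_1(\widehat\Omega)]$ into $\E[\,\cdot\,]/\nu(\widehat\Omega)$. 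It therefore suffices that $\E\,\mathcal E_{\nu_1}(f_a)\le\mathcal E_\nu(f_a)$. Each edge term $\nu_t(\sigma)P_{\nu_t}(\sigma,\tau)=\frac1n\frac{\nu_t(\sigma)\nu_t(\tau)}{\nu_t(\sigma)+\nu_t(\tau)}$ is a concave (harmonic-mean-type) function of the two martingales $\nu_t(\sigma),\nu_t(\tau)$, so it is a supermartingale, exactly as in \cite{ChenEldan}. The entropic case is identical: $(\nu(\sigma)+\nu(\tau))\ent_{p}(x,y)=\nu(\sigma)\,x\log\frac{x(\nu(\sigma)+\nu(\tau))}{x\nu(\sigma)+y\nu(\tau)}+\dots$ is jointly concave (perspective of a convex function, with sign) in $(\nu(\sigma),\nu(\tau))$. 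Hence $\mathcal H_{\nu_t}(f_b)$ is a supermartingale, and we fix $b$ at the optimizer of $\mathcal H^\partial_\nu(f)$.

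The point requiring care is the interplay between the two laws. We must apply the Grönwall step under $\widetilde\P$ while applying the supermartingale step under $\P$, and check that the density factor $\nu_1(\widehat\Omega)/\nu(\widehat\Omega)$ converts one into the other exactly. We must also check the integrability and positivity conditions, since $\nu_t(\widehat\Omega)>0$ almost surely for all $t$ because the tilts are finite.
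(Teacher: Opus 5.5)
Your proposal is correct and is essentially the paper's proof. Your Girsanov tilt with density $\nu_t(\widehat\Omega)/\nu(\widehat\Omega)$ is exactly the paper's conditioned law $\widehat{\P}=\P(\cdot\mid X\in\widehat\Omega)$ in the El Alaoui--Montanari posterior representation, your conversion $\widetilde{\E}[\mathcal E_{\nu_1}(f_a)/\nu_1(\widehat\Omega)]=\E[\mathcal E_{\nu_1}(f_a)]/\nu(\widehat\Omega)$ is the same computation by which the paper shows $w_t=\nu_t/\nu_t(\widehat\Omega)$ is a $\widehat{\P}$-martingale, and the Chen--Eldan conservation step and the pathwise use of the hypothesis at $t=1$ match the paper. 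The remaining differences are cosmetic (you fix a deterministic optimizer $a$, resp.\ $b$, instead of showing the infimum is itself a supermartingale), and two points are worth stating explicitly: the field produced by \eqref{eq:SL} under $\widetilde{\P}$ is the same $h_t$, so the boundary form at $t=1$ is indeed that of $\nu_1=\nu_{1,h_1}$; and the entropy step needs the covariance bound uniformly over linear tilts (Chen--Eldan's Proposition~3.19/Lemma~3.20), which the hypothesis supplies because $(\widehat\nu_t)_h=\widehat\nu_{t,h_t+h}$.
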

Combining the preceding proposition applied with $\nu=\mu_{J,h}$ and
Lemma~\ref{lemma:general-boundary-decomp} immediately gives the following corollary. The case $J\succeq0$ follows by replacing $J$ with $J+\delta I$, and letting $\delta\downarrow0$.
\begin{corollary}\label{cor:ising-restrictions}
Consider a non-empty subset $\widehat\Omega\subseteq\Omega$ and let $J\succeq0$.
Suppose that for some $\alpha:[0,1]\to[0,\infty)$,
\begin{equation}\label{eq:ising-restriction-covariance}
\Big\|\Cov\Big(\mu_{tJ,h}\big(\,\cdot\big|\widehat\Omega\big)\Big)\Big\|_{\op}\leq \alpha(t),\qquad \forall t\in[0,1],\quad h\in\mathbb R^n.
\end{equation}
Then, for every $h\in\R^n$, with $\widehat\nu:=\mu_{J,h}(\cdot\mid\widehat\Omega)$, we have
\[
\begin{split}
    \Var_{\widehat\nu}(f)&\leq n\exp\bigg( 2\|J\|_{\mathrm{op}}\int_0^1\alpha(t)dt \bigg)
       \mathcal E_{\mu_{J,h}}^\partial(f),\qquad
     f:\widehat\Omega\to\R, \\
    \Ent_{\widehat\nu}(f)&\leq n\exp\bigg( 2\|J\|_{\mathrm{op}}\int_0^1\alpha(t)dt \bigg)
       \mathcal H_{\mu_{J,h}}^\partial(f),\qquad
     f:\widehat\Omega\to[0,\infty).
\end{split}
\]
\end{corollary}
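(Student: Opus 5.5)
The plan is to apply Proposition~\ref{prop:boundary-interaction-annealing} with $\nu:=\mu_{J,h}$ and to supply its endpoint hypotheses from Lemma~\ref{lemma:general-boundary-decomp}. First suppose $J\succ0$. For $t\in[0,1]$ and $h'\in\R^n$, the tilted measure in \eqref{eq:nu:t:h} is
\[
\nu_{t,h'}(\sigma)\propto e^{-t\langle J\sigma,\sigma\rangle+\langle h',\sigma\rangle}e^{\langle J\sigma,\sigma\rangle+\langle h,\sigma\rangle}=\mu_{(1-t)J,\,h+h'}(\sigma).
\]
Hence $\widehat\nu_{t,h'}=\mu_{(1-t)J,h+h'}(\cdot\mid\widehat\Omega)$. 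Since $h+h'$ ranges over all of $\R^n$, the covariance assumption \eqref{eq:ising-restriction-covariance} at parameter $1-t$ gives $\Cov(\widehat\nu_{t,h'})\preceq\alpha(1-t)I_n$. Thus \eqref{eq:annealing-covariance} holds with $\tilde\alpha(t):=\alpha(1-t)$, and $\int_0^1\tilde\alpha=\int_0^1\alpha$.

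Next I check the endpoint hypotheses. At $t=1$ we have $\nu_{1,h'}=\mu_{0,h+h'}$, which is a product measure on $\Omega$. Lemma~\ref{lemma:general-boundary-decomp} applies to it and to the subset $\widehat\Omega$; the measure is fully supported, so $\nu_{1,h'}(\widehat\Omega)>0$. It gives $\Var_{\widehat\nu_{1,h'}}(f)\le n\mathcal E^\partial_{\nu_{1,h'}}(f)$ and $\Ent_{\widehat\nu_{1,h'}}(f)\le n\mathcal H^\partial_{\nu_{1,h'}}(f)$ for all $h'$. So both parts of the proposition apply with $C_{\mathrm V}=C_{\mathrm E}=n$. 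Since $\nu=\mu_{J,h}$, the forms $\mathcal E^\partial_\nu$ and $\mathcal H^\partial_\nu$ on the right-hand side are exactly $\mathcal E^\partial_{\mu_{J,h}}$ and $\mathcal H^\partial_{\mu_{J,h}}$. This proves the claim for $J\succ0$.

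For $J\succeq0$, set $J_\delta:=J+\delta I$ with $\delta>0$. Because $\sigma_i^2=1$, the term $\langle\delta I\sigma,\sigma\rangle=\delta n$ is constant on $\Omega$, so $\mu_{tJ_\delta,h}=\mu_{tJ,h}$ for every $t$. Therefore the covariance hypothesis holds for $J_\delta$ with the same $\alpha$. The measure $\mu_{J_\delta,h}=\mu_{J,h}$ is also unchanged, and hence so are its Glauber kernel and the boundary forms. Applying the positive definite case to $J_\delta$ gives the bound with factor $\exp(2\|J+\delta I\|_{\op}\int\alpha)$. This factor is at most $\exp(2(\|J\|_{\op}+\delta)\int\alpha)$, and letting $\delta\downarrow0$ yields the stated bound.

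The only step needing care is the reparametrization $t\mapsto1-t$ in the covariance hypothesis together with the freedom in the external field, and this is straightforward.
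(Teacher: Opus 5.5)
Your proposal is correct and follows the paper's proof: apply Proposition~\ref{prop:boundary-interaction-annealing} with $\nu=\mu_{J,h}$, supply the $t=1$ hypotheses with $C_{\mathrm V}=C_{\mathrm E}=n$ from Lemma~\ref{lemma:general-boundary-decomp} (since $\nu_{1,h'}=\mu_{0,h+h'}$ is a product measure), and handle $J\succeq0$ via $J+\delta I$ with $\delta\downarrow0$. You also spell out that $\widehat\nu_{t,h'}=\mu_{(1-t)J,h+h'}(\cdot\mid\widehat\Omega)$, so the covariance hypothesis enters as $\alpha(1-t)$ with the same integral; the paper leaves this step implicit.
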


Next, we apply Corollary~\ref{cor:ising-restrictions} with $\widehat\Omega=\widehat\Omega_M$ to prove Theorem~\ref{theorem:core-func-ineq}. Under the sparse spectral width condition $8\omega_M(J)\le1-\eps$, Proposition~\ref{prop:sparse-covariance} verifies~\eqref{eq:ising-restriction-covariance} with $\alpha(t)=2/\eps$ (assuming w.l.o.g. $J\succeq 0$). The corollary therefore gives variance and entropy bounds in terms of $\mathcal E_{\mu_{J,h}}^\partial$ and $\mathcal H_{\mu_{J,h}}^\partial$.

To establish Poincar\'{e} and MLSI, it remains to compare these forms with $\mathcal E_{\mu_{J,h,M}}$ and $\mathcal H_{\mu_{J,h,M}}$, respectively. Their differences are precisely the quadratic and entropic boundary terms, $\mathcal Q_M(f)$ and $\mathcal B_M(f)$, where we abbreviate
\[
\begin{aligned}
\mathcal Q_M(f)&:=\inf_{a\in\mathbb R}\mathcal Q_M(f;a),\qquad \mathcal Q_M(f;a):=\mathcal Q^{\widehat\Omega_M}_{\mu_{J,h}}(f;a)\\
\mathcal B_M(f)&:=\inf_{b\geq0}\mathcal B_M(f;b),\qquad \mathcal B_M(f;b):=\mathcal B^{\widehat\Omega_M}_{\mu_{J,h}}(f;b).
\end{aligned}
\]
We control these terms by $\mathcal E_{\mu_{J,h,M}}(f)$ and $\mathcal H_{\mu_{J,h,M}}(f)$, respectively, up to a negligible error in terms of the variance and the entropy. This \emph{boundary absorption} step is where the drift conditions~\eqref{assumption:stability} and~\eqref{assumption:stability-entropy} enter: under the condition $\omega_M(J)<1/2$, they give uniform upward magnetization drift near the boundary of the core $\widehat\Omega_M$, i.e. $\{\sigma:\sum_i\sigma_i=M\}$ (cf. Lemma~\ref{prop:boundary-stability}). We prove the following proposition in Section~\ref{sec:boundary}.

\begin{proposition}
\label{prop:defect-absorption}
Suppose
$\omega_M(J)<1/2$, and let $R_M\equiv (n-M)/2\geq1$. There are universal constants $C\geq1$ and
$c>0$ such that the following hold.
\begin{itemize}
    \item[(a)] If \eqref{assumption:stability} holds, then every
    $f:\capOmega_M\to\R$ satisfies
    \[
        \mathcal Q_M(f)
        \leq C\mathcal E_{\mu_{J,h,M}}(f)
             +\frac{Ce^{-cR_M}}{n}\Var_{\mu_{J,h,M}}(f),
    \]
    and
    \[
        \Var_{\mu_{J,h,M}}(f)
        \leq Cn e^{8R_M\omega_M(J)}\mathcal E_{\mu_{J,h,M}}(f).
    \]

    \item[(b)] If \eqref{assumption:stability-entropy} holds, then every
    $f:\capOmega_M\to[0,\infty)$ satisfies
    \[
        \mathcal B_M(f)
        \leq C\mathcal H_{\mu_{J,h,M}}(f)
             +\frac{Ce^{-cR_M}}{n}\Ent_{\mu_{J,h,M}}(f),
    \]
    and
    \[
        \Ent_{\mu_{J,h,M}}(f)
        \leq Cn e^{8R_M\omega_M(J)}\mathcal H_{\mu_{J,h,M}}(f).
    \]
\end{itemize}
\end{proposition}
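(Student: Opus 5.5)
We prove Proposition~\ref{prop:defect-absorption} by comparing two structures on the core: the boundary layer $L_0:=\{\sigma:\sum_i\sigma_i=M\}$ (configurations with exactly $R_M$ minus spins), where the exit probabilities live, and the deep interior $L_{\mathrm{in}}:=\{\sigma:\#\{i:\sigma_i=-1\}\le 2R_M/3\}$. The drift near the boundary (Lemma~\ref{prop:boundary-stability}: $q_-\le\frac45 q_+$ on $\widehat\Omega_M$ for configurations with more than $2R_M/3$ minus spins) pushes mass and paths from $L_0$ inward.

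\textbf{Step 1 (exit weights are controlled by up-moves).} Since $\omega_M(J)<1/2$, the local fields along flips inside the minus-set $T$ with $|T|\le 2R_M$ change by a bounded amount, so for $\sigma\in L_0$ we get
\[
\sum_{\tau\notin\widehat\Omega_M}P(\sigma,\tau)=q_-(\sigma)\le \tfrac45 q_+(\sigma).
\]
Write the infimum over $a$ in $\mathcal Q_M(f;a)$ with $a:=\mu_{J,h,M}[f\mid L_{\mathrm{in}}]$, or a suitable interior average. Then $\mathcal Q_M(f;a)\lesssim\sum_{\sigma\in L_0}\mu_{J,h,M}(\sigma)q_-(\sigma)(f(\sigma)-a)^2$.

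\textbf{Step 2 (transport from the boundary inward).} Construct a canonical path, or rather a random flow: from $\sigma\in L_0$, run the magnetization-increasing chain, flipping minus spins to plus with the rates from $q_+$, for about $R_M/3$ steps until hitting $L_{\mathrm{in}}$. Bound $(f(\sigma)-a)^2$ by the Cauchy--Schwarz sum of squared increments along the path plus $(f(\text{endpoint})-a)^2$. The key weighting is that $\mu(\sigma)q_-(\sigma)$ shrinks geometrically along inward moves. This follows from the drift ratio $q_-/q_+\le 4/5$ together with detailed balance, which gives that the stationary mass of level $k$ below the boundary is comparable to $(4/5)^{k}$ times that of deeper levels, up to the factor in $\omega_M$. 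The geometric decay makes the congestion sum over path edges bounded by $C\,\mathcal E_{\mu_{J,h,M}}(f)$. The endpoint term, with its weight decaying like $(4/5)^{R_M/3}$, contributes $Ce^{-cR_M}n^{-1}\Var(f)$. Here the factor $n^{-1}$ comes from $q_-\le\delta_*R_M/n\cdot O(1)$ relative to the level mass, which uses \eqref{assumption:stability}.

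\textbf{Step 3 (crude bounds).} The second inequality in each part is a comparison argument. The measure $\mu_{J,h,M}$ is a bounded perturbation of the product measure $\mu_{0,h'}$ conditioned on $\widehat\Omega_M$ with $h'_i=h_i+2\sum_{j}J_{ij}$. Indeed, the interaction on the minus set $T$ differs from linear by at most $4R_M\omega_M(J)$ in log weight, which gives a density ratio of at most $e^{8R_M\omega_M}$ in both directions. For the conditioned product measure, use \eqref{assumption:stability} and the drift (a ``birth--death in levels'' argument, or directly Lemma~\ref{lemma:general-boundary-decomp} plus Step 2 for $J=0$) to get gap $\gtrsim 1/n$. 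Then transfer via the standard perturbation lemma for variance and Dirichlet forms.

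\textbf{Part (b).} Repeat the argument with $\ent_p(x,b)\lesssim \log\frac1{p(1-p)}\,(\sqrt x-\sqrt b)^2$ from two-point log-Sobolev inequalities (Lemma~\ref{lemma:entropic-boundary-reduction}). This reduces $\mathcal B_M(f)$ to a $\psi$-weighted version of $\mathcal Q_M(\sqrt f)$, and \eqref{assumption:stability-entropy} plays the role of \eqref{assumption:stability}. Then apply the part (a) path bound to $\sqrt f$, compare $\mathcal E(\sqrt f)\lesssim\mathcal H(f)$, and compare $\Var(\sqrt f)\le\Ent(f)$.

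\textbf{Main obstacle.} The main obstacle is Step 2: choosing the flow so that its congestion is $O(1)$ uniformly in $R_M$ while the endpoint contribution is exponentially small. This requires a careful level-by-level accounting of the drift ratio with the $\omega_M$ errors absorbed into the constant $4/5$.
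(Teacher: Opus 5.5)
\textbf{Verdict: genuine gap.} Your architecture matches the paper's:
\begin{itemize}
\item the drift lemma on levels with more than $2R_M/3$ minus spins;
\item transport of the boundary term from $\Omega_M$ to the level $n-2r_0$, where $q_-\le R_M/(3n)$ (this produces the term $\frac{R_M}{n}\rho^{L}\Var_\mu(f)$, with $\mu=\mu_{J,h,M}$);
\item a Holley--Stroock comparison with a conditioned product measure for the crude bounds;
\item the two-point log-Sobolev reduction to $\sqrt f$ for entropy.
\end{itemize}

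\textbf{The gap is Step~2.} You flag it as the main obstacle but do not resolve it, and as written it fails. Unweighted Cauchy--Schwarz along a path of length $L\asymp R_M/3$ costs a factor $L$. Edges at the first level can carry flow comparable to $\tfrac45$ of their conductance, so the method only yields $\mathcal Q_M(f)\lesssim R_M\,\mathcal E_{\mu}(f)+\dots$. A loss of $R_M$ (up to $n/2$) destroys the order-$1/n$ gap in Theorem~\ref{theorem:core-func-ineq}.

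Your stated reason for the decay is also off. Detailed balance and $q_-\le\tfrac45 q_+$ do not make the level masses $\mu(\Omega_k)$ decay geometrically. What they do give is this: push the flow started from $\mu q_-$ on $\Omega_M$ inward $j$ steps by the up-chain (moving $\sigma\to\sigma^{(i)}$, $\sigma_i=-1$, with probability $c_i(\sigma)/(nq_+(\sigma))$). The result is dominated pointwise by $(\tfrac45)^j\mu q_-$ on $\Omega_{M+2j}$. Hence every depth-$j$ edge carries at most $(\tfrac45)^{j+1}$ times its conductance $\mu(\sigma)c_i(\sigma)/n$.

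\textbf{The missing ingredient is a geometrically weighted Cauchy--Schwarz:} $\bigl(\sum_{j<L}x_j+y\bigr)^2\le\frac{1}{1-\beta}\bigl(\sum_{j<L}\beta^{-j}x_j^2+\beta^{-L}y^2\bigr)$ with $\tfrac45<\beta<1$. Depth-$j$ edges then carry $\tfrac45(\tfrac{4}{5\beta})^j$ times their conductance, which is summable. The endpoint carries $(\tfrac{4}{5\beta})^L\widetilde{\mathcal Q}_{n-2r_0}(f;a)$. So the choice of flow is fine; the weighting is what matters.

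The paper avoids paths entirely. From $q_-\le\alpha q_+$, the inequality $(x+y)^2\le(1+\theta)x^2+(1+\theta^{-1})y^2$, and reversibility $\mu(\sigma)c_i(\sigma)=\mu(\sigma^{(i)})c_i(\sigma^{(i)})$, it derives the one-level bound $\widetilde{\mathcal Q}_k(f;a)\le\rho\,\widetilde{\mathcal Q}_{k+2}(f;a)+\frac{\rho}{\theta}\mathcal E_\mu^k(f)$ with $\rho=\alpha(1+\theta)<1$. Iterating it gives exactly the weighted bound.

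\textbf{Part~(b).} Here \eqref{assumption:stability-entropy} does not simply replace \eqref{assumption:stability} in the transport, since no ratio below $1$ is available for the $\psi$-weight. It is used only on $\Omega_M$, where $\frac1n\sum_{i:\sigma_i=+1}\psi(\ell_i(\sigma))\le R_M/n\le 4q_+(\sigma)$. One inward step then converts the term into $\widetilde{\mathcal Q}_{M+2}(\sqrt f;a)+\mathcal E_\mu(\sqrt f)$, and only after that does the part~(a) transport apply.

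\textbf{Two smaller points in Step~3.}
\begin{itemize}
\item Absorbing $Ce^{-cR_M}\Var_\mu(f)$ into the left side works only for $R_M$ above a universal constant, but the crude bound must hold for every $R_M\ge1$. For $J=0$, \eqref{assumption:stability} gives the much better ratio $q_-/q_+\le\delta_*/(2/3-\delta_*)<\tfrac14$. One can then take $\rho=\tfrac12$, and the endpoint coefficient becomes $\frac{R_M}{3}2^{-L}\le L2^{-L}\le\tfrac12$ for all $R_M\ge1$.
\item The reference field must be $h+2(J-\diag(J))\1$, i.e.\ built from $\sum_{j\ne i}J_{ij}$. If you include the diagonal, then $\log\frac{d\mu}{d\nu}=4\one_T^\top J\one_T+\mathrm{const}$, which $\omega_M(J)$ does not control (take $J=cI$). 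Also, \eqref{assumption:stability} no longer transfers to the product measure.
\end{itemize}
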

Assuming Propositions~\ref{prop:sparse-covariance}, \ref{prop:boundary-interaction-annealing}, and \ref{prop:defect-absorption}, Theorem~\ref{theorem:core-func-ineq} is immediate.
\begin{proof}[Proof of Theorem~\ref{theorem:core-func-ineq}]
For simplicity, write $R= R_M\geq 1$, $\nu=\mu_{J,h,M}$, and $\gamma=\|J\|_{\op}$. Let
\[
    J':=J-\lambda_{\min}(J)I_n\succeq0,
    \qquad \|J'\|_{\op}\leq2\gamma.
\]
Then $8\omega_M(tJ')=8t\omega_M(J)\leq1-\eps$ and  $\mu_{tJ',h}=\mu_{tJ,h}$ for all $t\in[0,1]$ and $h\in\R^n$. Thus, Proposition~\ref{prop:sparse-covariance} gives $\|\Cov(\mu_{tJ,h,M})\|_{\op}\leq 2/\eps$ for all $t\in [0,1]$ and $h\in \R^n$. Thus, Corollary~\ref{cor:ising-restrictions} applies with
$\alpha(t)\equiv2/\eps$. Combining it with the first estimate in Proposition~\ref{prop:defect-absorption}, under
\eqref{assumption:stability}, gives
\[
    \Var_\nu(f)
    \le Cne^{8\gamma/\eps}\mathcal E_\nu(f)
       +Ce^{8\gamma/\eps-cR}\Var_\nu(f),
\]
for universal $C,c>0$. If $R\ge C_0(1+\gamma/\eps)$ for a sufficiently
large universal $C_0$, absorb the last term into the left-hand side.
Otherwise, $8R\omega_M(J)\le R\lesssim1+\gamma/\eps$, and the second estimate in Proposition~\ref{prop:defect-absorption}-(a) gives the same bound.
Repeating the argument for entropy under
\eqref{assumption:stability-entropy}, we obtain
\begin{align*}
    \Var_\nu(f)
    &\le C_1ne^{C_2\gamma/\eps}\mathcal E_\nu(f),
    &&\text{under \eqref{assumption:stability}}\\
    \Ent_\nu(f)
    &\le C_1ne^{C_2\gamma/\eps}\mathcal H_\nu(f),
    &&\text{under \eqref{assumption:stability-entropy}},
\end{align*}
for universal $C_1,C_2>0$. The definitions of $\gap(\nu)$ and
$\rho_{\mathrm{LS}}(\nu)$, together with
$\mathcal H_\nu(f)\le\mathcal E_\nu(f,\log f)$ (see e.g.~\cite{CMT15}), complete the proof.
\end{proof}

\subsection{Phase ordering and posterior sampling for synchronization}
\label{subsec:z2-proof}
We now prove Theorem~\ref{theorem:z2-synchronization} from Theorem~\ref{theorem:core-func-ineq} and Theorem~\ref{theorem:lifting}, which give rapid mixing for the reflected dynamics in a suitable region. To obtain phase ordering for the unrestricted dynamics, we also need to show that it remains in this region long enough to mix. Coupled using the same Glauber updates, the two chains agree until the unrestricted chain leaves the region.

The following lemma provides this control through an upward magnetization drift. 
\begin{lemma}\label{lemma:basin-retention}
Let $(X_t)_{t\in\Z_{\geq0}}$ be a Markov chain on $\Omega$ whose
transition kernel $P$ changes at most one coordinate at each step.
Write $m(\sigma):=n^{-1}\sum_{i=1}^n\sigma_i$.
Let $-1<a<b<1$ and $\delta\in(0,1)$, and suppose that
\[
    n\bigl((Pm)(\sigma)-m(\sigma)\bigr)\geq\delta\quad\textnormal{for all $\sigma\in \Omega$ such that}\quad a\leq m(\sigma)\leq b.
\]
Consider the stopping times $\tau_0:=\inf\{t\geq0:m(X_t)<a\}$ and
$\tau_1:=\inf\{t\geq0:m(X_t)\geq b\}$.
There are constants $C,c,\gamma>0$, depending only on $a,b,\delta$,
such that, for all sufficiently large $n$ and every $x\in\Omega$
with $m(x)\geq(a+b)/2$,
\begin{equation}\label{ineq:positive-drift-retention}
    \mathbb P_x\left(
        \tau_1>Cn
        \ \text{or}\
        \tau_0\leq\lfloor e^{\gamma n}\rfloor
    \right)
    \leq e^{-cn}.
\end{equation}
\end{lemma}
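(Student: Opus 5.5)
The plan is to track the integer-valued process $Y_t:=n\,m(X_t)$, which moves by $0$ or $\pm2$ at each step because $P$ changes at most one coordinate. In the window $[na,nb]$ its conditional drift is at least $\delta$. The main tool will be an exponential supermartingale $Z_t:=e^{-\theta Y_t}$ with a small $\theta>0$, in the spirit of standard gambler's-ruin or Lyapunov arguments. Whenever $a\le m(X_t)\le b$, a second-order expansion gives
\[
\mathbb E[e^{-\theta(Y_{t+1}-Y_t)}\mid X_t]\le 1-\theta\,\mathbb E[Y_{t+1}-Y_t\mid X_t]+C\theta^2\,\mathbb P(Y_{t+1}\ne Y_t\mid X_t)\le 1-\theta\delta+4e^{2\theta}\theta^2 .
\]
Choosing $\theta=\theta(\delta)$ small makes this at most $1-\theta\delta/2<1$. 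So $Z_{t\wedge\tau}$ is a supermartingale up to exit from the window, and it in fact decays geometrically.

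For the upper exit bound, I will start from $m(x)\ge(a+b)/2$. If $m(x)\ge b$ already, then $\tau_1=0$. Otherwise the geometric decay gives $\mathbb E[Z_{t}\one_{t<\tau_0\wedge\tau_1}]\le (1-\theta\delta/2)^t e^{-\theta Y_0}$. On the event $\{t<\tau_0\wedge\tau_1\}$ we have $Z_t\ge e^{-\theta nb}$, so $\mathbb P(\tau_0\wedge\tau_1>t)\le (1-\theta\delta/2)^t e^{\theta n(b-a)}$, which is $e^{-cn}$ once $t=Cn$ with $C$ large. Separately, $\mathbb P(\tau_0<\tau_1)\le e^{-\theta n(b-a)/2}$ by optional stopping: $Z$ starts at most $e^{-\theta n(a+b)/2}$, and on $\{\tau_0<\tau_1\}$ it reaches at least $e^{-\theta(na-2)}$, the overshoot being at most one step of size $2$. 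Combining the two bounds gives $\mathbb P_x(\tau_1>Cn)\le e^{-cn}$.

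For the lower exit bound, I will decompose the trajectory into excursions. After $\tau_1$ the chain sits at $m\ge b$. Each time it enters the window from above, it does so at $Y\ge nb-2$. From there, reaching below $na$ before returning to $\ge nb$ has probability at most $e^{-\theta(n(b-a)-4)}$, by the same optional stopping argument applied to $Z$ (the supermartingale property only holds inside the window, and that is exactly where this excursion lives). There are at most $e^{\gamma n}$ such entries before time $e^{\gamma n}$. A union bound then gives $\mathbb P(\tau_1<\tau_0\le e^{\gamma n})\le e^{\gamma n}e^{-\theta n(b-a)/2}$, which is at most $e^{-cn}$ if $\gamma<\theta(b-a)/4$. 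The case $\tau_0<\tau_1$ is already covered by the previous paragraph.

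The main obstacle is the bookkeeping at the boundaries. The drift hypothesis is available only on $a\le m\le b$, so the supermartingale must be stopped exactly on exiting the window, and the one-step overshoots have to be absorbed into constants. Also, $e^{-\theta Y}$ only controls downward exits, so the excursion counting must restart at each downward crossing of level $b$; no control above $b$ is needed. Taking $n$ large absorbs the lattice effects.
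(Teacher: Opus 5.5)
Your argument is correct, but it obtains the single-excursion estimate with a different tool than the paper. The paper uses the additive Doob martingale $N_t=m(X_t)-m(X_0)-\sum_{s<t}\bigl((Pm)(X_s)-m(X_s)\bigr)$, whose increments are bounded by $4/n$. Both failure modes ($\tau_0\wedge\tau_1>T$ with $T=\lceil n(b-a)/\delta\rceil$, and $\tau_0<\tau_1$ with $\tau_0\le T$) force $\min_{t\le T}N_{t\wedge\tau}\le-(b-a)/2$, and the maximal Azuma--Hoeffding inequality bounds that event. The union bound over downward crossings of level $b$, via the strong Markov property, is then the same as yours.

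You instead use the exponential Lyapunov function $e^{-\theta n m}$. Its one-step contraction factor $1-\theta\delta/2$ inside the window gives, separately, a geometric tail for $\tau_0\wedge\tau_1$ and a horizon-free gambler's-ruin bound $\mathbb{P}(\tau_0<\tau_1)\le e^{-\theta n(b-a)/2}$, by optional stopping of a bounded stopped process. Your route is self-contained: it needs no concentration inequality, only a second-order Taylor bound that uses the step size $2$. It also fits the Foster--Lyapunov viewpoint of Section~\ref{sec:lifting}. The paper's route needs only the first-moment drift and bounded increments, and it packages both failure modes into one deviation event. Both give exponents of order $\delta(b-a)n$.

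One small slip: on $\{\tau_0<\tau_1\}$ you only know $n\,m(X_{\tau_0})<na$, so $Z_{\tau_0}>e^{-\theta na}$. The overshoot bound $n\,m(X_{\tau_0})\ge na-2$ gives an \emph{upper} bound on $Z_{\tau_0}$, so it does not yield $Z_{\tau_0}\ge e^{-\theta(na-2)}$. Using $e^{-\theta na}$ gives exactly your stated bound $e^{-\theta n(b-a)/2}$. The overshoot is needed only at entry from above, where $n\,m\ge nb-2$, and you handle that case correctly.
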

\begin{proof}
The proof closely follows the stopped magnetization martingale argument in Gheissari, Sly, and Sohn~\cite[Lemmas~4.11 and~5.1]{GheissariSlySohn}, and we include the proof for completeness. Fix $x\in\Omega$ with $m(x)\geq(a+b)/2$.
Let $\mathcal F_t:=\sigma(X_0,\ldots,X_t)$,
 define the $\mathcal F_t$-martingale 
\[
    N_t:=m(X_t)-m(X_0)
        -\sum_{s=0}^{t-1}\bigl((Pm)(X_s)-m(X_s)\bigr).
\]
which has uniform increments $|N_{t+1}-N_t|\leq4/n$. Now, consider the stopping time $\tau:=\tau_0\wedge\tau_1$. For $s<\tau$, the drift assumption gives
$(Pm)(X_s)-m(X_s)\geq\delta/n$, so
\begin{equation*}
    N_{t\wedge\tau}
    \leq m(X_{t\wedge\tau})-m(x)
        -\frac{\delta}{n}(t\wedge\tau).
\end{equation*}
It follows that 
$\tau>T:=\lceil n(b-a)/\delta\rceil$ implies $N_T\leq-(b-a)/2$. Also, since $m(x)\geq (a+b)/2$, $\tau_0<\tau_1$ and $\tau_0\leq T$ imply
$N_{\tau_0}\leq-(b-a)/2$. Thus,
\begin{equation*}
\begin{split}
    \sup_{x:\,m(x)\geq(a+b)/2}
    \P_x(\tau_1>T\ \text{or}\ \tau_0<\tau_1)
    &\leq  \sup_{x:\,m(x)\geq(a+b)/2}\P_x\bigg(\min_{0\leq t\leq T}N_{t\wedge \tau}\leq -\frac{b-a}{2}\bigg)\\
    &\leq
    \exp\left(-\frac{(b-a)^2n^2}{128T}\right)\leq e^{-\kappa n}
\end{split}
\end{equation*}
where the second inequality holds by maximal Azuma-Hoeffding inequality (see e.g. \cite[Theorem 3.2.1]{roch2024modern}), applied to the stopped martingale $(-N_{t\wedge\tau})_{t\ge0}$, whose increments are bounded in absolute value by $4/n$. The last inequality holds
for some $\kappa=\kappa(a,b,\delta)>0$.

Each downward crossing of $b$ lands at a configuration $y$ with
$m(y)\in[b-2/n,b)$, hence $m(y)\geq(a+b)/2$ for large $n$.
The preceding bound shows that each such excursion reaches below
$a$ before returning to $b$ with probability at most $e^{-\kappa n}$.
A union bound over at most $\lfloor e^{\kappa n/3}\rfloor$ crossings, together with the strong
Markov property at $\tau_1$, gives
\[
    \sup_{x:\,m(x)\geq(a+b)/2}
    \P_x(\tau_1>T\ \text{or}\ \tau_0\leq \lfloor e^{\kappa n/3}\rfloor)
    \leq(\lfloor e^{\kappa n/3}\rfloor+1)e^{-\kappa n}.
\]
Using $T\leq Cn$, with $C=2(b-a)/\delta$, proves the claim. 
\end{proof}
Recall the posterior distribution $\pi_Y$ in \eqref{eq:posterior:Z2}. Under the change of variables $\tau_i=\theta_i\sigma_i$, we have $\pi_Y(\diag(\theta)\tau)=\mu_{J_0,0}(\tau)$, where
\[
J_{0}=\frac{\lambda^2}{2n}\1\1^\top+\frac{\lambda}{2} \diag(\theta)W\diag(\theta).
\]
The following result collects the estimates needed to apply Theorems~\ref{theorem:lifting} to $\mu_{J_0,0}$. Its proof is given in Section~\ref{subsec:z2-estimates}.

\begin{proposition}\label{prop:z2-inputs}
There are universal constants $C',\lambda_0>0$ such that
the following holds for every fixed $\lambda\geq\lambda_0$.
Put
\[
    m_0:=\frac{C'}{2\lambda},
    \qquad
    R:=\left\lfloor\frac{n}{\lambda^3}\right\rfloor,
    \qquad
    M:=n-2R.
\]
Let $P$ denote the discrete time Glauber transition kernel for $\mu_{J_0,0}$. With probability $1-o(1)$ over $(\theta,W)$,
we have $8\omega_M(J_0)\leq1/2$, and that
\eqref{assumption:stability-entropy} holds at cutoff $M$,
and
\begin{equation}\label{eq:z2-input-drift}
    n\bigl((Pm)(\tau)-m(\tau)\bigr)
    \geq\frac{1-m(\tau)}2,
    \quad\textnormal{for all $\tau$ such that}\quad m_0\leq m(\tau)<M/n.
\end{equation}
\end{proposition}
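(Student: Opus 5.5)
We verify the three claims separately, working with $J_0=\frac{\lambda^2}{2n}\1\1^\top+\frac{\lambda}{2}\widetilde W$, where $\widetilde W:=\diag(\theta)W\diag(\theta)\sim\mathrm{GOE}(n)$ is independent of $\theta$. Throughout we use two standard random-matrix facts, each holding with probability $1-o(1)$.

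\emph{Fact 1 (operator norm):} $\|\widetilde W\|_{\op}\le 2+o(1)$.

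\emph{Fact 2 (sparse principal submatrices):} uniformly over $|T|\le k$ with $k=\delta n$, $\|\widetilde W_{T,T}\|_{\op}\le C\sqrt{\delta\log(e/\delta)}$. This follows from a net argument: for fixed $T$ and a unit vector $u$ supported on $T$, $\langle u,\widetilde Wu\rangle$ is Gaussian with variance $O(1/n)$. A $1/4$-net over $\binom{n}{k}9^k\le e^{k\log(9en/k)}$ vectors then gives tail $\exp(-cnt^2)$ against this entropy.

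\textbf{Sparse spectral width.} Take $|T|\le n-M=2R\le 2n/\lambda^3$. The rank-one part contributes at most $\frac{\lambda^2}{2n}|T|\le\lambda^{-1}$ to the width. The noise part contributes at most $\lambda\cdot C\sqrt{\lambda^{-3}\log\lambda}=C\sqrt{\log\lambda/\lambda}$. Both are $\le 1/32$ for $\lambda\ge\lambda_0$, so $8\omega_M(J_0)\le 1/2$.

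\textbf{Condition \eqref{assumption:stability-entropy}.} Here $h=0$, and
\[
\ell_i(\1)=2\sum_{j\ne i}(J_0)_{ij}=\lambda^2\tfrac{n-1}{n}+\lambda g_i,\qquad g_i:=\textstyle\sum_{j\ne i}\widetilde W_{ij}.
\]
Each $g_i$ is approximately $\cN(0,1)$, and the empirical distribution of $(g_i)$ concentrates. Since $\psi(u)\lesssim (1+|u|)e^{-2u}$ for $u\ge0$ and $\psi\le \log 4+O(|u|)$ for $u<0$, we have $\E\psi(\ell_i)\lesssim e^{-c\lambda^2}$: the bulk gives $(1+\lambda^2)e^{-\lambda^2}$, and the event $g_i<-\lambda/2$ has probability $e^{-\lambda^2/8}$ times polynomial factors. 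By Markov's inequality, or by a variance bound using weak dependence of the $g_i$ through row sums, $\sum_i\psi(\ell_i(\1))\le n e^{-c\lambda^2}$ with high probability. This is $\le \delta_* R\approx \delta_* n\lambda^{-3}$ for large $\lambda$.

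\textbf{Drift \eqref{eq:z2-input-drift}.} This is the main obstacle, since it must hold uniformly over all $\tau$ with $m(\tau)\in[m_0,M/n)$. Write $k$ for the number of minus spins, so $1-m=2k/n$, and $\ell_i(\tau)=\lambda^2 m(\tau)+\lambda(\widetilde W\tau)_i+O(\lambda^2/n)$. We need $2(q_+-q_-)\ge (1-m)/2=k/n$. For minus sites we use $1-\phi\ge\one\{\ell_i>0\}$, and for plus sites $\phi(\ell_i)\le \one\{\ell_i< \lambda^2m/2\}+e^{-\lambda^2 m}$.

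Call a site \emph{bad} if $\lambda|(\widetilde W\tau)_i|\ge\lambda^2 m/2$, that is, $|(\widetilde W\tau)_i|\ge \lambda m/2\ge C'/4$. Since $\|\widetilde W\tau\|_2^2\le 4.1n$, the number of bad sites $b$ satisfies $b\le 17n/(\lambda m)^2$ deterministically. This is a crude bound, so we refine it. Let $S$ be the bad set and $u=\sign((\widetilde W\tau)_S)$. Then $\langle u,\widetilde W\tau\rangle\ge b\lambda m/2$. Splitting $\tau=\1-2\tau^-$, where $\tau^-$ is the indicator of the minus set, we get $|\langle u,\widetilde W\1\rangle|\lesssim \sqrt{b n\log(en/b)}$ by a union bound over $(S,u)$. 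The term $\langle u,\widetilde W\tau^-\rangle$ is bounded by Fact 2 on the union of supports, giving $\lesssim\sqrt{bk}\sqrt{\frac{b+k}{n}\log}+\ldots$; alternatively one can use the full operator norm, $\le 2.1\sqrt{bk}$. Combining yields $b\lesssim (n\log(\cdot)+k)/(\lambda m)^2$.

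The bad count must be compared against $k$. In the regime $k\ge R=n/\lambda^3$, that is, near the boundary region, with $m\ge m_0=C'/(2\lambda)$, we have $(\lambda m)^2\ge C'^2/4$. Then $b\lesssim k/C'^2+(n/C'^2)\,\mathrm{polylog}$. I expect the step needing the most care to be the error term $\sqrt{bn\log(en/b)}$ coming from $\widetilde W\1$. Handling it is where the $1-m$ scaling and possibly the choice $m\le M/n$ enter: when $k$ is a small fraction of $n$, we use Fact 2 with $\sqrt{\log}$ losses, absorbed by large $C'$ and $\lambda_0$. With this, good minus sites contribute $q_+\ge (k-b)/n$ and good plus sites contribute $q_-\le b/n+e^{-\lambda^2m}$. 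The bound $e^{-\lambda^2 m}\le e^{-C'\lambda/2}\ll \lambda^{-3}\le k/n$ holds since $k\ge R$. Choosing $C'$ large makes $2(q_+-q_-)\ge 2(k-3b)/n-o(k/n)\ge k/n$, which is the claimed drift.
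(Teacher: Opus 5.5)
\textbf{Parts that match the paper.} Your arguments for $8\omega_M(J_0)\le 1/2$ and for \eqref{assumption:stability-entropy} are the paper's. One caveat: Markov's inequality alone only gives failure probability of order $e^{-c\lambda^2}$, which is not $o(1)$ for fixed $\lambda$. You need the law-of-large-numbers/variance route, which is the one the paper uses.

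\textbf{The gap in the drift estimate.} The gap is at exactly the step you flagged. For $m(\tau)>1/2$ the number $k$ of minus spins can be as small as $R\approx n/\lambda^3$. So you must show that the number of sites where the row-sum field $\lambda g_i$, with $g_i=\sum_{j\ne i}\widetilde W_{ij}$, lies below $-\lambda^2 m/4$ is $o(n\lambda^{-3})$. Any bound that goes through $\|\widetilde W\1\|_2$ or $\|\widetilde W\|_{\op}$ only gives a Chebyshev-type count $O(n/(\lambda m)^2)=O(n/\lambda^2)$. Your union bound $|\langle u,\widetilde W\1\rangle|\lesssim\sqrt{bn\log(en/b)}$ leads to the same $b\lesssim n\log(\cdot)/(\lambda m)^2$. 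Neither bound is below the needed $n/\lambda^3$, so $2(k-3b)/n\ge k/n$ is not established near $m=M/n$. Invoking Fact~2 cannot fix this, because $\1$ is not sparse. Fact~2 only improves the $\widetilde W\tau^-$ term, and that term was already fine.

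\textbf{The missing ingredient.} What is missing is an exponential (Gaussian) tail for the $\tau$-independent vector $g$. The paper writes $\ell_i(\sigma)=\lambda^2(m-\sigma_i/n)+\lambda g_i-2\lambda(\widetilde W\one_T)_i$ and treats the two random parts separately.

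The $\sigma$-dependent part is handled as you do. From $\|\widetilde W\one_T\|_2^2\le K^2k$, at most $256K^2k/C'^2\le k/8$ sites have $|(\widetilde W\one_T)_i|>\lambda m/8$, a count proportional to $k$.

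For $g$, which does not depend on $\sigma$, the paper uses convergence of the empirical distribution of $(g_i)$ to $\Phi$. Since the threshold $-\lambda m/4$ is monotone in $m$, two fixed thresholds suffice:
\begin{itemize}
\item when $m\le 1/2$ (so $k\ge n/4$), at most $2n\Phi(-C'/8)\le k/8$ sites are bad;
\item when $m>1/2$, at most $2n\Phi(-\lambda/8)=o(\lambda^{-3})n\le k/8$ sites are bad.
\end{itemize}

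\textbf{Your union bound can be repaired.} The union-bound route also works once the variance is computed correctly. For fixed $S$ and signs $u\in\{\pm1\}^S$ with $|S|=b$, the variable $\langle u,\widetilde W\1\rangle$ has variance $O(b)$, not $O(bn)$. Hence w.h.p.\ $|\langle u,\widetilde W\1\rangle|\lesssim b\sqrt{\log(en/b)}$ uniformly over $(S,u)$. This gives $b\lesssim ne^{-c(\lambda m)^2}+k/(\lambda m)^2$, which is $\ll k$ in both regimes.

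\textbf{A minor slip.} The inequality $1-\phi(\ell)\ge\one\{\ell>0\}$ is false; one only has $1-\phi(\ell)\ge 1/2$ for $\ell>0$. Use $1-\phi(\ell_i)\ge 1-e^{-\lambda^2 m}$ at good sites, or write $q_+-q_-=\frac1n\bigl(k-\sum_i\phi(\ell_i(\tau))\bigr)$ as the paper does.
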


\begin{proof}[Proof of Theorem~\ref{theorem:z2-synchronization}]
We use the notations $m_0,R,M,$ and $P$ as in Proposition~\ref{prop:z2-inputs}, and work on its high-probability event together with $\|W\|_{\op}\leq C$, where the latter holds w.h.p. since $W$ is a GOE (see e.g.~\cite[Theorem 4.4.3]{Vershynin2026}). Let $S$ be the smallest integer of the same parity as $n$ with $S\geq m_0n$, and set
\[
    \widehat\Omega_{\theta,S}:=\{\sigma:m_\theta(\sigma)\geq m_0\},
    \qquad
    \widehat\pi_S:=\pi_Y(\,\cdot\mid\widehat\Omega_{\theta,S}).
\]
Write $\widehat P$ for the discrete time Glauber transition kernel on $\widehat\pi_S$. The change of variables $\tau_i=\theta_i\sigma_i$ identifies $\widehat\pi_S$ and $\widehat P$ with $\mu_{J_0,0,S}$ and discrete time Glauber dynamics on $\mu_{J_0,0,S}$. In these coordinates, we apply Theorem~\ref{theorem:lifting}-(b) and Lemma~\ref{lemma:basin-retention}. 

We first verify the conditions of Theorem~\ref{theorem:lifting}-(b). Since we work on the w.h.p. event of Proposition~\ref{prop:z2-inputs}, $8\omega_M(J_0)\leq 1/2$ holds and \eqref{assumption:stability-entropy} is satisfied. Moreover, since $n((Pm)(\tau)-m(\tau))=2(q_{+}(\tau)-q_{-}(\tau))$, \eqref{eq:z2-input-drift} verifies
\eqref{assumption:tail-stability} with $\zeta=1/2$.
Finally, for sufficiently large $\lambda_0$ and $n$, we have
$0\leq S<M$ and $M\leq(1-\lambda^{-3})n$, and since
$\|J_0\|_{\op}\leq\lambda^2/2+C\lambda/2$, it follows that
\begin{equation}\label{eq:z2-input-information}
    \max_{\sigma\in\widehat\Omega_{\theta,S}}
    \log\bigg(\frac1{\widehat\pi_S(\sigma)}\bigg)
    \leq n\bigl(\log2+2\|J_0\|_{\op}\bigr)
    \leq C_\lambda n.
\end{equation}
Thus, \eqref{assumption:H4} holds with $\eta=C_{\lambda}$. Therefore, Theorem~\ref{theorem:lifting}-(b) with $\eps=1/2$ and
$\vartheta=\lambda^{-3}$ yields
$\rho_{\mathrm{LS}}(\widehat\pi_S)\geq1/(C_\lambda n)$.
Together with \eqref{eq:z2-input-information} and
\eqref{eq:mixing-time-bounds}, this implies the mixing bound
\[
    \sup_{x\in\widehat\Omega_{\theta,S}}
    \|\widehat P^t(x,\cdot)-\widehat\pi_S\|_{\TV}
    \leq n^{-12},
    \qquad t\geq C_\lambda n\log n.
\]
Increasing $\lambda_0$ ensures that $3m_0\leq1/2<M/n$.
On $m_0\leq m\leq3m_0$, \eqref{eq:z2-input-drift} gives
$n(Pm-m)\geq1/4$. Lemma~\ref{lemma:basin-retention},
applied with $a=m_0$, $b=3m_0$, and $\delta=1/4$,
therefore bounds the probability of leaving
$\widehat\Omega_{\theta,S}$ before $e^{\gamma_\lambda n}$
by $e^{-c_\lambda n}$, uniformly over initial configurations
with $m_\theta(x)\geq2m_0$. Coupling the unrestricted chain $(X_t)_{t\geq 0}$ and the
reflected chain using the same updates gives
\begin{equation}\label{eq:z2-basin-coupling}
    \sup_{x:\,m_\theta(x)\geq2m_0}
    \|\P_x(X_t\in\cdot)-\widehat\pi_S\|_{\TV}
    \leq n^{-12}+e^{-c_\lambda n},\quad\textnormal{for all}\quad C_\lambda n\log n\leq t\leq e^{\gamma_\lambda n}.
\end{equation}
To compare $\widehat{\pi}_S$ with $\pi_Y(\cdot\mid \Omega_{\theta}^{+})$, observe that,
for any $\sigma\in \Omega$,
\[
    \log\bigg(\frac{\pi_Y(\theta)}{\pi_Y(\sigma)}\bigg)=n\left(\frac{\lambda^2}{2}\big(1-m_\theta(\sigma)^2\big)+\frac{\lambda}{2n}\big(\langle \theta,W\theta\rangle -\langle \sigma, W\sigma\rangle \big)\right)
    \geq n\left(\frac{\lambda^2}{2}\big(1-m_\theta(\sigma)^2\big)-C\lambda\right).
\]
In particular, if $|m_{\theta}(\sigma)|\leq m_0$, then the right-hand side is at least $n\lambda^2/4$ after increasing $\lambda_0$. Consequently,
\[
    \|\widehat\pi_S-\pi_Y(\,\cdot\mid\Omega_\theta^+)\|_{\TV}=\frac{\pi_Y\big(\{\sigma:0\leq m_\theta(\sigma)<m_0\}\big)}{\pi_Y\big(\Omega_{\theta}^{+}\big)}\leq 2^{n+1}e^{-n\lambda^2/4},
\]
where the last inequality holds by a union bound and $\pi_Y(\Omega_{\theta}^{+})\geq 1/2$ by spin-flip symmetry. Combining this with \eqref{eq:z2-basin-coupling} proves
\eqref{eq:z2-phase-ordering}, since $2m_0=C'/\lambda$,
after adjusting $C_0=C_0(\lambda)$.

For spectral initialization, let $v=v(Y)$ be a unit leading
eigenvector. Let $s\in \{-1,+1\}$ such that $s\langle \theta, v\rangle =|\langle \theta, v\rangle |$, and set $A=\{i: \sign(v_i)=s\theta_i\}$. Then,
\[
    |m_\theta(\sign(v))|\geq \frac{s\langle \theta, \sign(v)\rangle}{n}
    =\frac{2|A|}{n}-1\geq \frac{2|\langle \theta,v\rangle|^2}{n}-1,
\]
where in the last inequality, we used $|\langle \theta, v\rangle|\leq \sum_{i\in A}|v_i|$ by definition of $s$ and $A$, and Cauchy-Schwartz. By~\cite[Theorem~2.2]{benaych2011eigenvalues}, we know that for every fixed $\lambda>1$, $|\langle\theta/\sqrt n,v\rangle|^2$ converges in probability to $1-\lambda^{-2}$. Thus, after increasing $\lambda_0$, we have $|m_{\theta}(\sign(v))|\geq 2m_0$ with probability $1-o(1)$. Moreover, by spin-flip symmetry,
\[
    \frac12\pi_Y(\,\cdot\mid\widehat\Omega_{\theta,S})
    +\frac12\pi_Y(\,\cdot\mid-\widehat\Omega_{\theta,S})
    =\pi_Y(\,\cdot\mid |m_\theta|\geq m_0).
\]
Thus, \eqref{eq:z2-basin-coupling} and an independent fair sign
$\xi$ yield
\[
    \|\P_{\sign(v)}(\xi X_t\in\cdot)-\pi_Y\|_{\TV}
    \leq n^{-12}+e^{-c_\lambda n}+2^{n+1}e^{-n\lambda^2/4}
    \leq n^{-10}
\]
throughout $C_0n\log n\leq t\leq e^{n/C_0}$, after adjusting $C_0=C_0(\lambda)$, which concludes the proof.
\end{proof}

\section{Spectral independence of the core-conditioned measure}
\label{sec:spectral-independence}
This section proves Proposition~\ref{prop:sparse-covariance}. Fix $v\in \R^n$ and let $X,Y\stackrel{iid}{\sim}\mu_{J,h,M}$. Then,
\[
v^{\top}\Cov(\mu_{J,h,M})v=\Var\big(v^\top X\big)=\frac{1}{2}\E\Big[\big(v^\top(X-Y)\big)^2\Big].
\]
Define the (random) agreement and disagreement sets of spins of $X,Y$ by
\[
A:=\left\{i\in [n]:X_i=Y_i\right\}, \qquad B:=\left\{i\in[n]:X_i\neq Y_i\right\}.
\]
For a set $I\subset[n]$, write $X_I:=(X_i)_{i\in I}\in \R^{I}$, and use analogous notation for other vectors. Since the spins take values in $\{+1,-1\}$,
\[
X_A=Y_A\in \{+1,-1\}^A, \qquad X_B=-Y_B\in \{+1,-1\}^B.
\]
Hence $v^\top (X-Y)=2v_B^\top X_B$, and the tower property gives
\begin{equation}
\label{eq:duplicate:identity}
v^{\top}\Cov(\mu_{J,h,M})v=2\E\Big[\E\Big[\big(v_B^\top X_B\big)^2\Big| B, X_A\Big]\Big].
\end{equation}
We next calculate the conditional law of $X_B$ given $B,X_A$.
The key observation is that, for $x,y\in\{\pm1\}^n$
with agreement set $A$ and disagreement set $B$,
\[
\langle h,x\rangle+\langle x,Jx\rangle +\langle h,y\rangle +\langle y,Jy\rangle = 2\langle h_A,x_A\rangle + 2\langle x_A,J_{A,A} x_A\rangle+ 2\langle x_B, J_{B,B} x_B\rangle.
\]
That is, the external field on $B$ and the interactions between
$A$ and $B$ cancel between the two replicas. This identity using ``duplicate'' variables dates back to the work by Percus~\cite{percus1975correlation} and Lebowitz~\cite{lebowitz1974ghs} (see Eq.~(2.6) therein) proving correlation inequalities for Ising models. 

Note that the two constraints
$(x_A,x_B)\in\widehat{\Omega}_M$ and
$(x_A,-x_B)\in\widehat{\Omega}_M$ are equivalent to
\[
    \bigg|\sum_{i\in B}x_i\bigg|
    \le \sum_{i\in A}x_i-M.
\]
Given $B$, $X_A=x_A$, and $X_B=x_B$, we necessarily have $Y=(x_A,-x_B)$. Thus, for $x_B\in \{+1,-1\}^B$,
\begin{equation}\label{eq:conditional:law:XB:given:XA}
\begin{split}
    \mathbb P\big(X_B=x_B\big| B,X_A=x_A\big)
    &=\frac{\P(X_A=x_A, X_B=x_B\mid B)}{\sum_{x'_B\in \{+1,-1\}^B}\P(X_A=x_A, X_B=x'_B\mid B)}
    \\
    &\propto
    \exp\Big(2\big\langle x_B,J_{B,B}x_B\big\rangle\Big)
    \one_{\left\{
        \left|\sum_{i\in B}x_i\right|
        \le \sum_{i\in A}x_i-M
    \right\}}.
\end{split}
\end{equation}
Thus, conditional on $B$ and $X_A=x_A$, the law of $X_B$
is an Ising measure with interaction matrix $2J_{B,B}$ and
zero external field, restricted to  $\left|\sum_{i\in B}X_i\right|\le \sum_{i\in A}x_i-M$.

To bound this covariance, we use the following extension of Mikulincer and Sohn~\cite[Lemma~1]{mikulincer2025fast}.
The argument comes from the work of
Bauerschmidt and Bodineau~\cite{BB19}, also used by
Chen and Eldan~\cite[Lemma~5.2]{ChenEldan}. 

\begin{lemma}\label{lem:covaraince:transfer:MS}
    Let $\rho$ be a probability measure on $\{+1,-1\}^m$, and for $J\in \R^{m\times m}$ and $h\in \R^m$, write
\[
    \rho_{J,h}(x)\propto
    e^{\langle Jx,x\rangle+\langle h,x\rangle}\rho(x).
\]
Suppose that, for some $L>0$,
\[
    \sup_{h\in\mathbb R^m}
    \|\operatorname{Cov}(\rho_{0,h})\|_{\mathrm{op}}\leq L,
\]
and $J\succeq0$ with $2L\|J\|_{\mathrm{op}}<1$. Then, 
\[
    \sup_{h\in\mathbb R^m}
    \|\operatorname{Cov}(\rho_{J,h})\|_{\mathrm{op}}
    \leq\frac{L}{1-2L\|J\|_{\mathrm{op}}}.
\]

\end{lemma}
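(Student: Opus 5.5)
The plan is the Bauerschmidt–Bodineau Gaussian decomposition, done one field at a time. Fix $h\in\R^m$. Since $J\succeq0$, we can write
\[
e^{\langle Jx,x\rangle}=c\,\E\big[e^{\langle 2J^{1/2}G,\,x\rangle}\big]\quad\text{up to a normalization},
\]
or equivalently use a Gaussian variable $y$ with covariance $\Sigma:=2J$. Concretely, $e^{\langle Jx,x\rangle}\propto\int e^{\langle y,x\rangle}e^{-\frac14\langle y,J^{-1}y\rangle}dy$ on the range of $J$. This represents $\rho_{J,h}$ as a mixture
\[
\rho_{J,h}=\int \rho_{0,h+y}\,\kappa(dy).
\]
Here the mixing law $\kappa$ on $y$ has density proportional to
\[
e^{-\frac14\langle y,J^{-1}y\rangle}\,Z_{0,h+y},\qquad Z_{0,h+y}:=\sum_x e^{\langle h+y,x\rangle}\rho(x).
\]
(If $J$ is singular, restrict to its range; alternatively replace $J$ by $J+\delta I$ and let $\delta\downarrow0$, since both sides are continuous in $J$.)

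By the law of total covariance,
\[
\Cov(\rho_{J,h})=\E_\kappa[\Cov(\rho_{0,h+y})]+\Cov_\kappa\big(b(h+y)\big),
\]
where $b(h')$ is the barycenter of $\rho_{0,h'}$. The first term is $\preceq L I$ by hypothesis.

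For the second term, note that $\log Z_{0,h+y}$ is the log-partition function, with gradient $b(h+y)$ and Hessian $\Cov(\rho_{0,h+y})\preceq L I$. Hence the potential of $\kappa$,
\[
V(y)=\tfrac14\langle y,J^{-1}y\rangle-\log Z_{0,h+y},
\]
has Hessian $\succeq \tfrac12 J^{-1}-LI$. Because $\tfrac12J^{-1}\succeq \frac{1}{2\|J\|_{\op}}I$ and $2L\|J\|_{\op}<1$, this Hessian is $\succeq \big(\tfrac1{2\|J\|_{\op}}-L\big)I\succ0$, so $\kappa$ is strongly log-concave.

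The Brascamp–Lieb inequality then gives, for any unit $v$,
\[
\Var_\kappa\big(\langle v,b(h+y)\rangle\big)\le \E_\kappa\big[\langle \nabla_y\langle v,b\rangle,(\nabla^2V)^{-1}\nabla_y\langle v,b\rangle\rangle\big].
\]
Since $\nabla_y\langle v,b\rangle=\Cov(\rho_{0,h+y})v=:Cv$ has norm at most $L$, the right-hand side is controlled by $\langle Cv,(\nabla^2V)^{-1}Cv\rangle$. A cruder bound using only $\nabla^2V\succeq(\frac1{2\|J\|}-L)I$ gives $L^2/(\frac1{2\|J\|}-L)$, and adding the first term yields
\[
L+\frac{2L^2\|J\|}{1-2L\|J\|}=\frac{L}{1-2L\|J\|}.
\]
This is exactly the claimed bound, and it holds uniformly in $h$.

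The main obstacle is the step where singular $J$ or unbounded $J^{-1}$ must be handled, and making the mixture/Hessian identities rigorous. I would first treat $J\succ0$ via the direct Gaussian integral and then pass to the limit $\delta\downarrow0$, using that $m$ is finite so everything is continuous in $J$. If the cruder Brascamp–Lieb bound turned out to be insufficient, I would instead use the sharper matrix form $C(\frac12J^{-1}-C)^{-1}C$ with $C\preceq L I$, but the computation above already closes.
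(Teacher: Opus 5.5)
Your proposal is correct and follows essentially the same route as the paper's proof, which is sketched from Mikulincer--Sohn. The common steps are: the Gaussian mixture representation $\rho_{J,h}=\int\rho_{0,h+y}\,\kappa(dy)$; strong log-concavity of $\kappa$ from $\nabla^2V\succeq\bigl(\frac{1}{2\|J\|_{\op}}-L\bigr)I$; Brascamp--Lieb for the barycenter term together with the law of total covariance; and reducing to $J\succ0$ via $J+\delta I$, which leaves the measure unchanged on the hypercube. One small slip: the mixing variable should be $(2J)^{1/2}G$ rather than $2J^{1/2}G$, but your concrete density $e^{-\frac14\langle y,J^{-1}y\rangle}$ (covariance $2J$) is the right one, so the rest of the computation is unaffected.
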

\begin{proof}
    We sketch the proof since the argument is identical to the proof of \cite[Lemma 1]{mikulincer2025fast}. By adding a multiple of the identity matrix, we assume w.l.o.g. that $0\prec J \prec \frac{1}{2L}I_m$. By Gaussian integration, one can check that for $x\in \{+1,-1\}^m$,
    \[
    \rho_{J,h}(x)\propto \int_{\R^n}e^{-f(y)}\rho_{0,h+y}(x)\, dy,\qquad f(y)=\frac{1}{2}\big\langle y, (2J)^{-1}y \rangle-\log Z_{h+y},
    \]
    where $Z_{h+y}=\sum_{x\in \{+1,-1\}^m}\exp(\langle x,h+y\rangle)\rho(x)$. By the stated assumption, we have
    \[
    \nabla^2 f(y)=(2J)^{-1}-\Cov(\rho_{0,h+y})\succeq \left((\|2J\|_{\op})^{-1}-L\right) I_m\succ 0.
    \]
    This log-concavity of the measure $e^{-f(y)}dy$, together with Brascamp-Lieb inequality~\cite[Theorem 4.1]{BL76} and total law of variance, gives the desired conclusion.
\end{proof}
\noindent
Combining Lemma~\ref{lem:covaraince:transfer:MS} with the theory of negative dependence~\cite{pemantle2000towards, BBL09} gives the following.
\begin{lemma}\label{lem:covariance:transfer}
    Let $J\in \R^{m\times m}$ be a symmetric matrix with $\lambda_{\max}(J)-\lambda_{\min}(J)\leq \frac{1-\eps}{4}$ for some $\eps>0$. For $h\in \R^m$ and $I=[a,b]$, where $a,b\in \mathbb{Z}$, consider the probability measure $\nu$ on $\{+1,-1\}^m$ defined by
    \[
    \nu(x)\propto \exp\big(\langle x, Jx\rangle + \langle h, x\rangle\big)\one_{\{\sum_{i=1}^{m}x_i\in I\}}.
    \]
    Then
    \[
    \left\|\Cov(\nu)\right\|_{\op}\leq \frac{2}{\eps}.
    \]
\end{lemma}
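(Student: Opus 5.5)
The plan is to apply Lemma~\ref{lem:covaraince:transfer:MS} with base measure $\rho$ equal to the uniform measure on $\{x\in\{+1,-1\}^m:\sum_i x_i\in I\}$, i.e.\ the measure $\rho(x)\propto\one_{\{\sum_i x_i\in I\}}$. First, shift $J$ by a multiple of the identity: since $\langle x,(J-\lambda_{\min}(J)I_m)x\rangle=\langle x,Jx\rangle-m\lambda_{\min}(J)$ is constant on the hypercube, $\nu$ is unchanged if $J$ is replaced by $J':=J-\lambda_{\min}(J)I_m\succeq0$, and $\|J'\|_{\op}=\lambda_{\max}(J)-\lambda_{\min}(J)\le(1-\eps)/4$. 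Thus $\nu=\rho_{J',h}$ in the notation of Lemma~\ref{lem:covaraince:transfer:MS}.

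The key input is the bound $L=1$ on $\sup_h\|\Cov(\rho_{0,h})\|_{\op}$. The measure $\rho_{0,h}$ is a product measure $\otimes_i\mathrm{Bern}$ with fields $h_i$, conditioned on the total number of $+1$ spins lying in an interval. A product measure on $\{0,1\}^m$ is strongly Rayleigh, i.e.\ its generating polynomial $\prod_i(q_i+p_iz_i)$ is real stable. Conditioning a strongly Rayleigh measure on its rank (the number of ones) lying in an interval preserves the strongly Rayleigh property; this is the truncation result of Borcea--Br\"and\'en--Liggett~\cite{BBL09}. Strongly Rayleigh measures are negatively associated, and in particular have pairwise nonpositive covariances $\Cov(x_i,x_j)\le0$ for $i\ne j$ (\cite{pemantle2000towards,BBL09}).

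Hence $\Cov(\rho_{0,h})$ has diagonal entries $\Var(x_i)\le1$ and nonpositive off-diagonal entries. Moreover, each row sum equals $\Cov(x_i,\sum_jx_j)$. I would bound the operator norm by the maximal absolute row sum $\max_i\sum_j|\Cov(x_i,x_j)|=\max_i\bigl(2\Var(x_i)-\Cov(x_i,\sum_j x_j)\bigr)$. To see this, note $\sum_{j\ne i}|\Cov(x_i,x_j)|=\Var(x_i)-\Cov(x_i,S)$, where $S:=\sum_j x_j$. This is the main obstacle: $-\Cov(x_i,S)$ must be controlled.

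Under conditioning on $S\in I$, I expect $\Cov(x_i,S)\ge0$. Conditional on $x_i$, the remaining coordinates form a truncated product measure, and conditioning on $x_i=+1$ rather than $-1$ shifts the allowed range of $S-x_i$ downward by 2. The resulting law of $S$ is therefore stochastically comparable; the monotone-coupling property of strongly Rayleigh measures (stochastic covering) should give $\E[S\mid x_i=1]\ge\E[S\mid x_i=-1]$, i.e.\ $\Cov(x_i,S)\ge0$. Granting this, the row sums are at most $2\Var(x_i)\le2$, so $L\le2$.

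Plugging $L=2$ and $\|J'\|_{\op}\le(1-\eps)/4$ into Lemma~\ref{lem:covaraince:transfer:MS} gives $2L\|J'\|_{\op}\le 1-\eps<1$ and
\[
\|\Cov(\nu)\|_{\op}\le\frac{2}{1-(1-\eps)}=\frac{2}{\eps}.
\]
This matches the claimed constant, which suggests $L=2$ is the intended bound. If the positivity of $\Cov(x_i,S)$ proves delicate, a fallback is to use that $S$ has a log-concave law under truncation of a Poisson-binomial, and to argue via the rank-sequence monotonicity in~\cite{BBL09} directly.
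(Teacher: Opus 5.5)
Your architecture matches the paper's: the same base measure $\rho$ (uniform on $\{\sum_i x_i\in I\}$), the same identity shift giving $J\succeq0$ with $\|J\|_{\op}\le(1-\eps)/4$, negative correlation of the truncated product measure from \cite{BBL09}, and Lemma~\ref{lem:covaraince:transfer:MS} with $L=2$. Your opening claim that the key input is $L=1$ is a slip. For $m=2$, $I=\{0\}$, $h=0$ one gets $\Cov(\rho_{0,h})=\begin{pmatrix}1&-1\\-1&1\end{pmatrix}$, which has norm $2$.

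The real divergence is how you get from the sign pattern to $\|\Cov(\rho_{0,h})\|_{\op}\le2$, and here your write-up has an unproved step. The maximal-absolute-row-sum bound forces you to show $\Cov(x_i,S)\ge0$, which you only ``expect''. This does not follow from positive semidefiniteness and nonpositive off-diagonals alone. Take $\Sigma_{11}=\Sigma_{jj}=1$, $\Sigma_{1j}=\Sigma_{j1}=-k^{-1/2}$ for $2\le j\le k+1$, and zeros elsewhere. This matrix is PSD with operator norm $2$, but its first absolute row sum is $1+\sqrt k$. So on your route you genuinely need an extra property of the truncated measure.

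That property is true and provable. Let $N'$ be the number of $+1$'s among $(x_j)_{j\ne i}$. Under the untruncated product measure $N'$ is Poisson-binomial, hence log-concave. Given $x_i=\pm1$, the truncated measure restricts $N'$ to a window, and the window for $x_i=+1$ is the one for $x_i=-1$ shifted down by one. By monotone likelihood ratio, this shift lowers the conditional mean of $N'$ by at most one, which gives $\Cov(x_i,S)\ge0$. Alternatively, use the stochastic covering property of strongly Rayleigh measures.

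The paper avoids this step entirely. Set $\Sigma:=\Cov(\rho_{0,h})$, take any unit $v$, and let $|v|:=(|v_1|,\ldots,|v_m|)$. Using $\Sigma_{ij}\le0$ for $i\ne j$ and then $|v|^\top\Sigma|v|\ge0$,
\[
v^\top\Sigma v\le\sum_i\Sigma_{ii}v_i^2-\sum_{i\ne j}\Sigma_{ij}|v_i||v_j|=2\sum_i\Sigma_{ii}v_i^2-|v|^\top\Sigma|v|\le2\max_i\Sigma_{ii}\le2.
\]
This is exactly the paper's one-line deduction: PSD, nonpositive off-diagonal entries and diagonal at most $1$ give norm at most $2$. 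Replacing your row-sum step with it completes the proof using nothing beyond negative correlation.
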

\begin{proof}
Let $\rho$ be the uniform measure on $\{x\in \{+1,-1\}^m:\sum_{i=1}^{m}x_i\in I\}$. Then $\rho_{0,h}(x)\propto e^{\langle h,x\rangle}\rho(x)$ is the law of independent $\{+1,-1\}$ valued variables conditioned on their sum
belonging to $I$. It is known (see e.g.~\cite[Theorem 3.7]{BBL09}) that such measures are negatively correlated. Thus, $\Cov(\rho_{0,h})$ is positive semi-definite with non-positive off-diagonal entries and diagonal entries at most $1$, which implies that
    \[
\left\|\Cov(\rho_{0,h})\right\|_{\op}\leq 2.
    \]
    Also, by shifting by a multiple of identity, we may assume w.l.o.g. $0\preceq J\preceq \frac{1-\eps}{4}I_m$.
    Therefore, applying Lemma~\ref{lem:covaraince:transfer:MS} with $L=2$ gives $\|\Cov(\nu)\|_{\op}
\le \frac{2}{1-4\|J\|_{\op}}
\le \frac{2}{\eps}$.
\end{proof}
We apply Lemma~\ref{lem:covariance:transfer} with $\nu$ the conditional law of $X_B$ given $B,X_A$ to prove Proposition~\ref{prop:sparse-covariance}.
\begin{proof}[Proof of Proposition~\ref{prop:sparse-covariance}]
  Fix a unit vector $v\in\R^n$. By
\eqref{eq:conditional:law:XB:given:XA}, conditional on $B,X_A$,
the law of $X_B$ is an Ising measure with interaction matrix
$2J_{B,B}$ and zero external field, restricted to $|\sum_{i\in B}X_i|
\le \sum_{i\in A}X_i-M.$ This law is invariant under $X_B\mapsto-X_B$, so
$\E[X_B\mid B,X_A]=0$.

Since each disagreement contributes one negative spin to the
pair $(X,Y)$, we have 
\[
|B|\le2R_M\le n-M
\]
Thus, we have almost surely
\[
2\bigl(\lambda_{\max}(J_{B,B})-\lambda_{\min}(J_{B,B})\bigr)
\le 2\omega_M(J)\le\frac{1-\eps}{4}.
\]
Lemma~\ref{lem:covariance:transfer} therefore gives
\[
\|\Cov(X_B\mid B,X_A)\|_{\op}\le\frac{2}{\eps}.
\]
Combining with \eqref{eq:duplicate:identity} and $\E[X_B\mid B,X_A]=0$, we have
\[
\begin{aligned}
v^\top\Cov(\mu_{J,h,M})v=2\E\Big[\E\big[(v_B^\top X_B)^2\mid B,X_A\big]\Big]\le\frac{4}{\eps}\E\Big[\|v_B\|_2^2\Big].
\end{aligned}
\]
Finally, independence of $X,Y$ gives $\P(i\in B)
=2\P(X_i=1)\bigl(1-\P(X_i=1)\bigr)\le\frac12.$ Hence
$\E[\|v_B\|_2^2]=\sum_i v_i^2\P(i\in B)\le1/2$,
which proves the desired bound.
\end{proof}

\section{Stochastic localization with a boundary form}\label{sec:localization}

This section proves Proposition~\ref{prop:boundary-interaction-annealing} using the stochastic localization scheme~\eqref{eq:def:SL} of Eldan~\cite{eldan2013thin}. Throughout, we fix a probability measure $\nu$ on $\Omega\equiv \{-1,+1\}^n$, a symmetric positive definite matrix $J\in \R^{n\times n}$, and a subset $\widehat{\Omega}\subseteq \Omega$ such that $\nu(\widehat\Omega)>0$. 

In our conditioning argument below, it is useful to adopt the following information-theoretic view of stochastic localization by El Alaoui and Montanari~\cite{elalaoui2022information}: when $C_t\equiv C$ in the SDE~\eqref{eq:def:SL}, the random measure $\nu_t$ can be represented as the posterior distribution of $X\sim\nu$ given the noisy observation $Y_t=tCX+B_t$. When the state space is discrete, we have the following.
\begin{theorem}[{\cite[Theorem~2]{elalaoui2022information}}]
\label{theorem:gaussian-localization}
Let $\nu$ be a probability measure on $\Omega$ and let $C\in \R^{n\times n}$ be a symmetric
positive definite matrix. Under $\P$, let $X\sim\nu$ be independent of
standard Brownian motion $(B_t)_{t\geq0}$ in $\R^n$, and set
\[
    Y_t:=tCX+B_t,
    \qquad \mathcal F_t:=\sigma(Y_s:0\leq s\leq t).
\]
Then the conditional distribution $\nu_t(x):=\P(X=x\mid\mathcal F_t)$
satisfies
\begin{equation}\label{eq:full-posterior-localization}
    \nu_t(x)\propto
    \exp\bigg(\langle CY_t,x\rangle-\frac t2\|Cx\|_2^2\bigg)\nu(x).
\end{equation}
Moreover, letting $b(\nu_t)\equiv \sum_{x\in \Omega}x\nu_t(x)$ denote the barycenter, $ \widetilde B_t:=Y_t-\int_0^t Cb(\nu_s)ds$ is an $(\mathcal F_t)$-Brownian motion under $\P$, and
\[
    d\nu_t(x)=\nu_t(x)\langle x-b(\nu_t),C\,d\widetilde B_t\rangle,
    \qquad \nu_0=\nu.
\]
\end{theorem}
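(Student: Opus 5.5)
The proof has three steps: obtain the posterior formula from Girsanov and Bayes, show the innovation process is a Brownian motion via Lévy's characterization, and derive the SDE by Itô's formula. The state space $\Omega$ is finite, so every expectation is a finite sum and no integrability issues arise.

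\textbf{Posterior formula.} Condition on $X=x$. Then $(Y_s)_{s\le t}$ is a Brownian motion with constant drift $Cx$. By Girsanov's theorem (or the Cameron--Martin formula, since the drift is deterministic given $x$), its law on path space has density
\[
\exp\Big(\int_0^t\langle Cx,dY_s\rangle-\tfrac t2\|Cx\|_2^2\Big)=\exp\Big(\langle CY_t,x\rangle-\tfrac t2\|Cx\|_2^2\Big)
\]
with respect to Wiener measure. Here we used that $Cx$ is constant in $s$ and that $C$ is symmetric. This density depends on the path only through $Y_t$. Bayes' rule over the finite prior $\nu$ then gives \eqref{eq:full-posterior-localization} as a regular conditional law given $\mathcal F_t$.

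\textbf{Innovation process.} The process $\widetilde B_t$ is continuous and $(\mathcal F_t)$-adapted. Fix $s<t$. Since $B_t-B_s$ is independent of $\mathcal F_s\vee\sigma(X)$, we have $\E[B_t-B_s\mid\mathcal F_s]=0$. By the tower property, $\E[b(\nu_r)\mid\mathcal F_s]=\E[X\mid\mathcal F_s]=b(\nu_s)$ for $r\ge s$. Therefore
\[
\E[\widetilde B_t-\widetilde B_s\mid\mathcal F_s]=(t-s)C\,b(\nu_s)-\int_s^t C\,\E[b(\nu_r)\mid\mathcal F_s]\,dr=0,
\]
so $\widetilde B$ is an $(\mathcal F_t)$-martingale. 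It differs from $B$ by a continuous finite-variation process, so $\langle\widetilde B\rangle_t=\langle B\rangle_t=tI_n$. Lévy's characterization then shows that $\widetilde B$ is an $(\mathcal F_t)$-Brownian motion.

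\textbf{The SDE.} Write $N_t(x):=\exp(\langle CY_t,x\rangle-\tfrac t2\|Cx\|^2)\nu(x)$ and $Z_t:=\sum_x N_t(x)$, so that $\nu_t(x)=N_t(x)/Z_t$. Since $d\langle Y\rangle_t=I\,dt$, Itô's formula gives
\[
dN_t(x)=N_t(x)\langle Cx,dY_t\rangle,\qquad dZ_t=Z_t\langle Cb(\nu_t),dY_t\rangle.
\]
Applying Itô's formula to the quotient $N_t(x)/Z_t$ yields
\[
d\nu_t(x)=\nu_t(x)\Big[\langle Cx-Cb(\nu_t),dY_t\rangle-\langle Cx,Cb(\nu_t)\rangle dt+\|Cb(\nu_t)\|^2dt\Big].
\]
The two $dt$ terms combine to $-\langle C(x-b(\nu_t)),Cb(\nu_t)\rangle\,dt$. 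Substituting $dY_t=d\widetilde B_t+Cb(\nu_t)\,dt$ gives $d\nu_t(x)=\nu_t(x)\langle x-b(\nu_t),C\,d\widetilde B_t\rangle$. The initial condition $\nu_0=\nu$ holds because $Y_0=0$.

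\textbf{Main obstacle.} The only delicate point is the first step: identifying the conditional law given the path $\sigma$-algebra $\mathcal F_t$ as the Bayes formula built from the Girsanov density, which needs the density to be a jointly measurable function of $(x,\text{path})$. With finitely many $x$ and a constant drift this is immediate. The remaining steps are routine Itô calculus.
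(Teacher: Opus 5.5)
Your proof is correct. The Cameron--Martin/Bayes computation gives the posterior formula, the martingale-plus-bracket check with L\'evy's characterization identifies $\widetilde B$ as an $(\mathcal F_t)$-Brownian motion, and the It\^o quotient computation with $dY_t=d\widetilde B_t+Cb(\nu_t)\,dt$ yields the stated SDE. The paper does not prove this result itself but imports it from El Alaoui--Montanari, and your argument is the standard filtering-theoretic proof of that cited theorem, so there is no substantive difference in route to compare.
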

To prove Proposition~\ref{prop:boundary-interaction-annealing}, we take $C=(2J)^{1/2}$ in Theorem~\ref{theorem:gaussian-localization}. With $\nu_t$ defined in \eqref{eq:full-posterior-localization} and $\nu_{t,h}$ in \eqref{eq:nu:t:h}, we have
\[
\nu_t=\nu_{t,h_t},\quad\textnormal{where}\quad h_t=(2J)^{1/2}Y_t.
\]
We consider the process $(\nu_t)_{t\in [0,1]}$ conditioned on $\widehat\Omega$. Define
\[
     \widehat\nu_t:=\nu_t(\,\cdot\mid\widehat\Omega),\qquad \widehat{\P}:=\P(\,\cdot\mid X\in\widehat\Omega),
\]
and write $\widehat{\E}$ for expectation under $\widehat{\P}$. These conditional measures are well-defined, since $\nu(\widehat\Omega)>0$ and \eqref{eq:full-posterior-localization} imply $\nu_t(\widehat\Omega)>0$ almost surely for all $t\in [0,1]$. In particular, $\widehat\nu_t=\widehat\nu_{t,h_t}$.

A first observation is that $(\widehat\nu_t)_{t\in [0,1]}$ is itself a stochastic localization process under $\widehat\P$. Indeed, Bayes rule gives
\begin{equation}\label{eq:capped-posterior-localization}
    \widehat\nu_t(x)
    =\frac{\P(X=x\mid \cF_t)\one_{x\in \widehat\Omega}}{\P(X\in \widehat\Omega\mid \cF_t)}
    =\widehat{\P}(X=x\mid\mathcal F_t).
\end{equation}
Note that under $\widehat{\P}$, $X$ has law
$\widehat\nu\equiv \nu(\,\cdot\mid\widehat\Omega)$ and remains independent
of $B$. Thus, applying Theorem~\ref{theorem:gaussian-localization} with prior
$\widehat\nu$ yields
\begin{equation}\label{eq:restricted-localization}
    d\widehat\nu_t(x)
    =\widehat\nu_t(x)
       \big\langle x-b(\widehat\nu_t),C\,d\widehat B_t\big\rangle,
    \qquad \widehat\nu_0=\widehat\nu,
\end{equation}
where $(\widehat B_t)_{t\geq 0}$ is an $(\mathcal F_t)$-Brownian motion under
$\widehat{\P}$.

Using this observation, we prove that the Dirichlet and conditional entropy forms with boundary terms are also supermartingales under $\widehat{\P}$. This generalizes a crucial property used in stochastic localization arguments, established without boundary terms in~\cite[Lemma~9]{EKZ} and~\cite[Proposition~4.3]{ChenEldan}.

\begin{proposition}\label{prop:defect-supermartingale}
For $f:\widehat\Omega\to \R$, the process $\bigl(\mathcal E_{\nu_t}^\partial(f)\bigr)_{0\leq t\leq1}$ is a $\mathcal{F}_t$-supermartingale under $\widehat{\P}$. The same holds for $\bigl(\mathcal H_{\nu_t}^\partial(f)\bigr)_{0\leq t\leq1}$ for every $f:\widehat\Omega\to[0,\infty)$.
\end{proposition}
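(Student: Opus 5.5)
The plan is to reduce both claims to the case of a fixed constant, $a$ for the Dirichlet form and $b$ for entropy. An infimum of a family of supermartingales need not itself be a supermartingale, so instead I will use the fact that for each fixed $a$ the unconditioned forms of the constant extension are supermartingales. The identity from the introduction gives $\mathcal E_{\nu_t}(f_a)=\nu_t(\widehat\Omega)\,\mathcal E^\partial_{\nu_t}(f;a)$, and the proof of Lemma~\ref{lemma:general-boundary-decomp} gives $\mathcal H_{\nu_t}(f_b)=\nu_t(\widehat\Omega)\,\mathcal H^\partial_{\nu_t}(f;b)$; neither identity uses the product structure. So $\mathcal E^\partial_{\nu_t}(f;a)=\mathcal E_{\nu_t}(f_a)/\nu_t(\widehat\Omega)$.

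Next I pass between $\P$ and $\widehat\P$. By \eqref{eq:capped-posterior-localization}, $\nu_t(\widehat\Omega)=\P(X\in\widehat\Omega\mid\mathcal F_t)$ is a $\P$-martingale, and $d\widehat\P/d\P|_{\mathcal F_t}=\nu_t(\widehat\Omega)/\nu(\widehat\Omega)$. The abstract Bayes formula says that if $Z_t$ is a $\P$-supermartingale, then $Z_t/\nu_t(\widehat\Omega)$ is a $\widehat\P$-supermartingale. For $s<t$,
\[
\widehat\E\Big[\tfrac{Z_t}{\nu_t(\widehat\Omega)}\,\Big|\,\mathcal F_s\Big]=\frac{\E[Z_t\mid\mathcal F_s]}{\nu_s(\widehat\Omega)}\le \frac{Z_s}{\nu_s(\widehat\Omega)}.
\]
This holds because $\nu_t(\widehat\Omega)>0$ a.s., and it needs $Z_t\ge0$ or integrability, which is fine since everything is bounded. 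Applying it with $Z_t=\mathcal E_{\nu_t}(f_a)$ shows that $\mathcal E^\partial_{\nu_t}(f;a)$ is a $\widehat\P$-supermartingale for each fixed $a$. The same argument with $Z_t=\mathcal H_{\nu_t}(f_b)$ handles each fixed $b\ge0$.

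The $\P$-supermartingale property of $t\mapsto\mathcal E_{\nu_t}(g)$ and $t\mapsto\mathcal H_{\nu_t}(g)$ for a fixed function $g$ on $\Omega$, along the linear-tilt localization with $C=(2J)^{1/2}$ from Theorem~\ref{theorem:gaussian-localization}, is exactly \cite[Lemma~9]{EKZ} and \cite[Proposition~4.3]{ChenEldan}. Their argument writes each edge term as a function of $\nu_t(\sigma),\nu_t(\tau)$: $\nu(\sigma)\nu(\tau)(g(\sigma)-g(\tau))^2/(\nu(\sigma)+\nu(\tau))$, respectively $(\nu(\sigma)+\nu(\tau))\ent_p$. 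This function is jointly concave and $1$-homogeneous. For neighbors, $d\nu_t(\sigma)/\nu_t(\sigma)$ and $d\nu_t(\tau)/\nu_t(\tau)$ differ only in the $x$ term, and Itô's formula then gives a nonpositive drift. I will invoke this for $g=f_a$ and $g=f_b$.

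Finally, I handle the infimum. Fix $s<t$ and let $a^*$ be an $\mathcal F_s$-measurable minimizer of $a\mapsto\mathcal E^\partial_{\nu_s}(f;a)$. This quadratic in $a$ has an explicit minimizer, a weighted average of boundary values; if the boundary weight vanishes, any $a$ works. Then
\[
\widehat\E[\mathcal E^\partial_{\nu_t}(f)\mid\mathcal F_s]\le\widehat\E[\mathcal E^\partial_{\nu_t}(f;a^*)\mid\mathcal F_s]\le\mathcal E^\partial_{\nu_s}(f;a^*)=\mathcal E^\partial_{\nu_s}(f).
\]
The middle inequality applies the fixed-$a$ result with an $\mathcal F_s$-measurable $a$. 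This is justified by conditioning, or by approximating $a^*$ by countably-valued $\mathcal F_s$-measurable variables, using continuity in $a$ and monotone or dominated convergence. For entropy, $b\mapsto\mathcal H^\partial_{\nu_s}(f;b)$ is continuous and convex on $[0,\infty)$ and tends to infinity. A measurable minimizer exists by a measurable selection, or one can use $\epsilon$-minimizers from a countable dense set, giving the same inequality up to $\epsilon$. The main obstacle I expect is this measurability and approximation step, together with confirming that the cited supermartingale property really holds for arbitrary fixed $g$ on $\Omega$ under the $\P$-localization. Both look routine, and the change of measure itself is clean.
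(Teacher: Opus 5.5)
Your proposal is correct. It is a reorganization of the paper's argument rather than a new idea.

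The paper never passes through the unconditioned forms. It shows directly that the normalized weights $w_t(x)=\nu_t(x)/\nu_t(\widehat\Omega)$ are $\widehat{\P}$-martingales for every $x\in\Omega$, including $x\notin\widehat\Omega$. It does this by exactly the Bayes computation you use, applied to the $\P$-martingale $\nu_t(x)$. It then writes $\mathcal E^\partial_{\nu_t}(f;a)$ and $\mathcal H^\partial_{\nu_t}(f;b)$ as sums of concave, $1$-homogeneous edge functions of pairs $(w_t(\sigma),w_t(\tau))$, and concludes by conditional Jensen.

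By $1$-homogeneity, your quantity $\mathcal E_{\nu_t}(f_a)/\nu_t(\widehat\Omega)$ is the same function evaluated at $w_t$. So the only difference is whether the change of measure is applied to the weights or to the form. Your version isolates a clean transfer lemma and uses the unconditioned result as a black box. The paper's version is self-contained and shows that only the martingale property of the weights and concavity matter.

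The same remark simplifies the unconditioned step you cite. No It\^o computation and no neighbor structure ("differ only in the $x$ term") are needed. Since $\nu_t(x)=\P(X=x\mid\mathcal F_t)$ is a Doob martingale, conditional Jensen applied to the concave edge functions immediately gives the $\P$-supermartingale property.

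One premise in your write-up is mistaken, though it does no harm. A countable infimum of nonnegative supermartingales \emph{is} a supermartingale. Writing $X^a_t:=\mathcal E^\partial_{\nu_t}(f;a)$, for each fixed $a\in\Q$ you have $\widehat{\E}[\inf_{a'\in\Q}X^{a'}_t\mid\mathcal F_s]\le\widehat{\E}[X^a_t\mid\mathcal F_s]\le X^a_s$. Taking the infimum over $a\in\Q$ then gives the claim, and the entropy case is identical with $b\in\Q_{\ge0}$. This is exactly the paper's reduction: continuity in $a$ and $b$ lets the infima be taken over countable sets. Your measurable-minimizer and $\epsilon$-minimizer arguments are valid but unnecessary; it is the \emph{supremum} of supermartingales that can fail to be one.
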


\begin{proof}
By continuity, the infima defining $\mathcal{E}_{\nu_t}^{\partial}(f)\equiv \inf_{a\in \R}\mathcal{E}_{\nu_t}^{\partial}(f;a)$, and $\mathcal{H}_{\nu_t}^{\partial}(f)\equiv \inf_{b\geq 0}\mathcal{H}_{\nu_t}^{\partial}(f;b)$ may be taken over $a\in \Q$ and $b\in \Q_{\geq 0}$, respectively. It therefore suffices to prove that $\mathcal E_{\nu_t}^{\partial}(f;a)$ and $\mathcal H_{\nu_t}^{\partial}(f;b)$ are supermartingales for every fixed $a\in\R$ and $b\geq0$, respectively.

Recall that these forms are not determined by $\widehat\nu_t$ alone:
they also depend on the transition probabilities $P_{\nu_t}(x,y)$
with $x\in\widehat\Omega$ and $y\notin\widehat\Omega$.
We therefore consider the normalized weights
\[
w_t(x):= \frac{\nu_t(x)}{\nu_t(\widehat\Omega)},\qquad x\in \Omega.
\]
Then, $w_t(x)=\widehat\nu_t(x)$ for $x\in \widehat\Omega$, so $w_t(x)$ is a martingale for $x\in \widehat\Omega$. We first extend this property to $x\notin \widehat\Omega$: indeed, for $s\leq t$,
\[
    \widehat{\E}\big[
       w_t(x)\big|\mathcal F_s
    \big]=\frac{1}{\nu_s(\widehat\Omega)}\E\left[
        \one_{\{X\in\widehat\Omega\}}
       w_t(x)\,\middle|\,\mathcal F_s
    \right]=\frac1{\nu_s(\widehat\Omega)}\E[\nu_t(x)\mid\mathcal F_s]
    =w_s(x).
\]
The first equality follows from Bayes rule, the second from $\E[\one_{{X\in\widehat\Omega}}|\mathcal F_t]=\nu_t(\widehat\Omega)$ and the tower property,
and the last from the martingale property of $\nu_t(x)$ under $\P$.

Note that $\mathcal{E}_{\nu_t}^{\partial}(f;a)$ and $\mathcal{H}_{\nu_t}^{\partial}(f;b)$ can be expressed with $(w_t(x))_{x\in \Omega}$ as follows. By definition of $\mathcal{E}_{\nu}^{\partial}(f;a)$ and $P_{\nu}(\sigma,\tau)$, we have
\[
\mathcal{E}_{\nu_t}^{\partial}(f;a)=\frac{1}{2n}\sum_{\sigma,\tau\in \widehat\Omega,\sigma\sim\tau}\frac{w_t(\sigma)w_t(\tau)}{w_t(\sigma)+w_t(\tau)}\big(f(\sigma)-f(\tau)\big)^2+\frac{1}{n}\sum_{\sigma\in \widehat\Omega,\tau\notin \widehat\Omega, \sigma\sim \tau}\frac{w_t(\sigma)w_t(\tau)}{w_t(\sigma)+w_t(\tau)}\big(f(\sigma)-a\big)^2.
\]
Similarly,
\[
\begin{split}
\mathcal{H}_{\nu_t}^{\partial}(f;b)
&=\frac{1}{2n}\sum_{\sigma,\tau\in \widehat\Omega, \sigma\sim \tau} \big(w_t(\sigma)+w_t(\tau)\big)\ent_{\frac{w_t(\sigma)}{w_t(\sigma)+w_t(\tau)}} \big(f(\sigma),f(\tau)\big)\\
&\qquad\quad+\frac{1}{n}\sum_{\sigma\in \widehat\Omega,\tau\notin \widehat\Omega, \sigma\sim \tau} \big(w_t(\sigma)+w_t(\tau)\big)\ent_{\frac{w_t(\sigma)}{w_t(\sigma)+w_t(\tau)}} \big(f(\sigma),b\big)
\end{split}
\]
Here, it is understood that the summands with $w_t(\sigma)+w_t(\tau)=0$ are $0$. Recall that $(w_t(\sigma))_{\sigma\in \Omega}$ is a non-negative martingale.  For any two non-negative martingales $M_t,N_t$ and fixed $u,v\geq0$, both $\frac{M_tN_t}{M_t+N_t}$ and
\[
(M_t+N_t)\ent_{\frac{M_t}{M_t+N_t}}(u,v)=uM_t\log u+vN_t\log v+(uM_t+vN_t)\log\bigg(\frac{M_t+N_t}{uM_t+vN_t}\bigg)
\]
are supermartingales since $(x,y)\mapsto \frac{xy}{x+y}$ and $(x,y)\mapsto x\log (y/x)$ are concave functions (see also~\cite[Fact A.2]{ChenEldan}). Both supermartingales are non-negative and bounded by a constant depending only on $u,v$ times $M_t+N_t$, so they are integrable. Since $\mathcal E_{\nu_t}^{\partial}(f;a)$ and $\mathcal H_{\nu_t}^{\partial}(f;b)$ are sums of such supermartingales, they are also supermartingales, which concludes the proof.
\end{proof}
The following result of~\cite{ChenEldan} shows that covariance bounds uniform over linear tilts give \emph{approximate conservation of variance and entropy} in expectation along the localization process.

\begin{proposition}[{\cite[Proposition~3.19 and Lemma~3.20]{ChenEldan}}]\label{fact:EC-conservation}
Let $(\nu_t)_{0\leq t\leq1}$ be a stochastic localization process satisfying~\eqref{eq:def:SL} with initial measure $\nu_0=\nu$ and an adapted process of $n\times n$ positive semi-definite matrices $(C_t)_{t\geq 0}$. 
For $h\in\R^n$, write
$(\nu_t)_h(x)\propto e^{\langle h,x\rangle}\nu_t(x)$.
Suppose a deterministic function $r:[0,1]\to[0,\infty)$
satisfies
\[
    \|C_t\Cov((\nu_t)_h)C_t\|_{\op}\leq r(t)
\]
almost surely for all $t\in[0,1]$ and $h\in\R^n$.
Then
\begin{align*}
    \E[\Var_{\nu_1}(f)]
    &\geq e^{-\int_0^1r(t)dt}\Var_{\nu}(f),
    &&f:\Omega\to\R,\\
   \E[\Ent_{\nu_1}(f)]
    &\geq e^{-\int_0^1r(t)dt}\Ent_{\nu}(f),
    &&f:\Omega\to[0,\infty).
\end{align*}
\end{proposition}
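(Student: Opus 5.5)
The plan is to follow the Chen–Eldan argument: compute the Itô drift of $\Var_{\nu_t}(f)$ and $\Ent_{\nu_t}(f)$ along~\eqref{eq:def:SL}, bound the (negative) drift by $r(t)$ times the current functional, and conclude by Gr\"onwall. Fix $f$ and set $F_t:=\nu_t[f]=\sum_x f(x)\nu_t(x)$. By~\eqref{eq:def:SL}, for any fixed function $g$, $\nu_t[g]$ is a martingale, and
\[
dF_t=\big\langle C_t\Cov_{\nu_t}(x,f),\,dB_t\big\rangle,
\qquad
\Cov_{\nu_t}(x,f):=\sum_x (x-b(\nu_t))f(x)\nu_t(x)\in\R^n .
\]
Since $\Omega$ is finite, all coefficients are bounded, so the local martingales below are true martingales and expectations of the drift terms may be integrated freely.

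\emph{Variance.} Write $\Var_{\nu_t}(f)=\nu_t[f^2]-F_t^2$. The first term is a martingale, and Itô's formula gives
\[
d\Var_{\nu_t}(f)=-\big\|C_t\Cov_{\nu_t}(x,f)\big\|_2^2\,dt+d(\text{mart}).
\]
For a unit vector $u$, Cauchy–Schwarz gives
\[
\langle u,C_t\Cov_{\nu_t}(x,f)\rangle=\Cov_{\nu_t}(\langle C_tu,x\rangle,f)\le\sqrt{\langle C_tu,\Cov(\nu_t)C_tu\rangle\,\Var_{\nu_t}(f)} .
\]
Using the hypothesis with $h=0$, this yields $\|C_t\Cov_{\nu_t}(x,f)\|_2^2\le r(t)\Var_{\nu_t}(f)$. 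Taking expectations, $\tfrac{d}{dt}\E\Var_{\nu_t}(f)\ge -r(t)\E\Var_{\nu_t}(f)$, and Gr\"onwall gives the first claim.

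\emph{Entropy.} For $f\ge0$ (assume $F_0>0$, otherwise there is nothing to prove), write $\Ent_{\nu_t}(f)=\nu_t[f\log f]-F_t\log F_t$. The first term is a martingale, and $d(F\log F)=(1+\log F)\,dF+\frac{d\langle F\rangle}{2F}$, so
\[
d\Ent_{\nu_t}(f)=-\frac{\|C_t\Cov_{\nu_t}(x,f)\|_2^2}{2F_t}\,dt+d(\text{mart}).
\]
By Gr\"onwall it therefore suffices to show the pathwise bound
\[
\|C\Cov_{\mu}(x,f)\|_2^2\le 2r\,\mu[f]\,\Ent_\mu(f)
\quad\text{whenever}\quad \|C\Cov(\mu_h)C\|_{\op}\le r\ \ \forall h .
\]
This is the step where uniformity over tilts is essential, and I expect it to be the main point.

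To prove this bound, let $\mu^f:=f\mu/\mu[f]$. Then $\Cov_\mu(x,f)/\mu[f]=b(\mu^f)-b(\mu)$ and $\Ent_\mu(f)/\mu[f]=\mathrm{KL}(\mu^f\|\mu)$. By the Donsker–Varadhan (Gibbs) variational formula, for every $h$,
\[
\mathrm{KL}(\mu^f\|\mu)\ge\langle h,b(\mu^f)\rangle-\Lambda(h),
\qquad
\Lambda(h):=\log\mu[e^{\langle h,x\rangle}] .
\]
Note that $\nabla\Lambda(h)=b(\mu_h)$ and $\nabla^2\Lambda(h)=\Cov(\mu_h)$. Take $h=sCu$ with $u$ a unit vector. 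Taylor's theorem along this line, together with the tilt-uniform bound $\langle Cu,\Cov(\mu_{sCu})Cu\rangle\le r$, gives
\[
\Lambda(sCu)\le s\langle Cu,b(\mu)\rangle+\tfrac{r s^2}{2}.
\]
Hence $\mathrm{KL}(\mu^f\|\mu)\ge\sup_s\big(sa-\tfrac{rs^2}{2}\big)=\tfrac{a^2}{2r}$, where $a:=\langle u,C(b(\mu^f)-b(\mu))\rangle$. Optimizing over $u$ yields
\[
\|C(b(\mu^f)-b(\mu))\|_2^2\le 2r\,\mathrm{KL}(\mu^f\|\mu),
\]
which after multiplying by $\mu[f]^2$ is exactly the required bound.

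Applying this with $\mu=\nu_t$ and $C=C_t$ (the hypothesis holds almost surely for all tilts $(\nu_t)_h$), the entropy drift is at least $-r(t)\Ent_{\nu_t}(f)$. Taking expectations and applying Gr\"onwall gives $\E[\Ent_{\nu_1}(f)]\ge e^{-\int_0^1 r}\Ent_\nu(f)$.
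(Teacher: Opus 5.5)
Your proof is correct and is essentially the Chen--Eldan argument that the paper cites without reproving. For the variance you use It\^o's formula with Cauchy--Schwarz and Gr\"onwall, and for the entropy the entropic-stability bound $\|C(b(\mu^f)-b(\mu))\|_2^2\le 2r\,\mathrm{KL}(\mu^f\|\mu)$ obtained from the Gibbs variational principle together with the tilt-uniform covariance bound. One small fix: $C_t$ need not be bounded, so justify the true-martingale property from the hypothesis rather than from finiteness of $\Omega$, e.g. $d\langle F\rangle_t=\|C_t\Cov_{\nu_t}(x,f)\|_2^2\,dt\le r(t)\|f\|_\infty^2\,dt$, or note that $e^{\int_0^t r}\Var_{\nu_t}(f)$ and $e^{\int_0^t r}\Ent_{\nu_t}(f)$ are bounded local submartingales.
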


\begin{proof}[Proof of Proposition~\ref{prop:boundary-interaction-annealing}]
For all $t\in [0,1]$, let $C_t\equiv C=(2J)^{1/2}$. We apply Proposition~\ref{fact:EC-conservation} to $(\widehat{\nu}_t)_{t\in[0,1]}$ under $\widehat{\P}$, which is a stochastic localization process by~\eqref{eq:restricted-localization}. Since $\widehat\nu_t=\widehat\nu_{t,h_t}$ with $\widehat\nu_{t,h}$ defined in~\eqref{eq:nu:t:h}, we have $(\widehat\nu_t)_h=\widehat\nu_{t,h_t+h}$. Thus, assumption~\eqref{eq:annealing-covariance} gives
\[
    \|C\Cov((\widehat\nu_t)_h)C\|_{\op}
    \leq2\|J\|_{\op}\alpha(t).
\]
Set $\gamma:=2\|J\|_{\op}\int_0^1\alpha(t)dt$. Proposition~\ref{fact:EC-conservation} thus yields
\[
    \Var_{\widehat\nu}(f)
    \leq e^\gamma\widehat{\E}[\Var_{\widehat\nu_1}(f)]\leq C_{\mathrm V}e^\gamma
          \widehat{\E}[\mathcal E_{\nu_1}^\partial(f)],
\]
where the last inequality holds by our assumption $\Var_{\widehat{\nu}_{1,h}}(f)\leq C_{\mathrm V}\mathcal{E}_{\nu_{1,h}}^{\partial}(f)$ along with $\widehat{\nu}_{1}=\widehat{\nu}_{1,h_1}$ and $\nu_1=\nu_{1,h_1}$. Since $(\mathcal{E}_{\nu_t}^{\partial}(f))_{t\in [0,1]}$ is a supermartingale under $\widehat{\P}$ by Proposition~\ref{prop:defect-supermartingale} and $\nu_0=\nu$, we have
\[
  \widehat{\E}[\mathcal E_{\nu_1}^\partial(f)]\leq \mathcal{E}_{\nu}^{\partial}(f),
\]
Combining these inequalities prove part~(a).

For part~(b), the same argument using Propositions~\ref{prop:defect-supermartingale} and~\ref{fact:EC-conservation} gives for every nonnegative $f$,
\[
    \Ent_{\widehat\nu}(f)
    \leq e^\gamma\widehat{\E}[\Ent_{\widehat\nu_1}(f)]\leq C_{\mathrm E}e^\gamma
          \widehat{\E}[\mathcal H_{\nu_1}^\partial(f)]
     \leq C_{\mathrm E}e^\gamma\mathcal H_\nu^\partial(f),
\]
which concludes the proof.
\end{proof}

\section{Absorbing the boundary form}\label{sec:boundary}\
This section proves Proposition~\ref{prop:defect-absorption}. Throughout this section, we fix $M\in \{0,\ldots, n-2\}$, and abbreviate $R=R_M\geq1$,
$\mu=\mu_{J,h,M}$, and write
$c_i(\sigma):=c_i(\sigma;\mu_{J,h})$ as the flip probability of $\sigma$. For $k\in\{-n,-n+2,\ldots,n\}$, define the magnetization slices
\[
\Omega_k:=\bigg\{\sigma\in\Omega:\sum_{i=1}^n\sigma_i=k\bigg\}.
\]
For all other $k$, it is understood that $\Omega_k=\emptyset$. Recall that the upward (resp. downward) magnetization drift $q_+$ (resp. $q_-$) are defined as
\[
q_+(\sigma)=P_{\mu_{J,h}}(\sigma,\Omega_{k+2})=\frac1n\sum_{i:\sigma_i=-1}
    \bigl(1-\phi(\ell_i(\sigma))\bigr),
\qquad
q_-(\sigma)=P_{\mu_{J,h}}(\sigma,\Omega_{k-2})=\frac1n\sum_{i:\sigma_i=+1}
    \phi(\ell_i(\sigma)).
\]
\begin{lemma}\label{prop:boundary-stability}
Suppose $\omega_M(J)<1/2$, and put $r_0:=\lfloor2R/3\rfloor$. Then,
\begin{enumerate}
\item[(a)] Under \eqref{assumption:stability},
\[
    q_-(\sigma)\le\frac R{3n},\qquad \sigma\in\Omega_{n-2r_0},
\]
and
\[
    q_-(\sigma)\le\frac45q_+(\sigma),\qquad \sigma\in\Omega_{n-2r},\quad r_0<r\le R.
\]
\item[(b)] Under \eqref{assumption:stability-entropy},
\[
    \frac1n\sum_{i:\sigma_i=+1}\psi(\ell_i(\sigma))\le\frac Rn,\qquad \sigma\in\Omega_{n-2r},\quad 1\le r\le R.
\]
\end{enumerate}
\end{lemma}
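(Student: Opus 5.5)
The plan is to reduce everything to a comparison between the local fields at $\sigma$ and at $\mathbf 1$. Fix $\sigma\in\Omega_{n-2r}$ with $1\le r\le R$, and let $T:=\{i:\sigma_i=-1\}$, so $|T|=r$. Then
\[
\ell_i(\sigma)=\ell_i(\mathbf 1)-v_i,\qquad v_i:=4\sum_{j\in T,\,j\ne i}J_{ij}.
\]
The first step is an elementary monotonicity bound for $\phi$. For $v\le 1$,
\[
\phi(u-v)\le e^{2v_+}\phi(u)\le e^{2}\phi(u),
\]
since $\phi(u-v)/\phi(u)=(1+e^{2u})/(1+e^{2u-2v})\le e^{2v}$ when $v\ge0$, and the ratio is at most $1$ when $v\le0$. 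Call a site $i$ \emph{good} if $v_i\le 1$ and \emph{bad} otherwise; the threshold $1$ may need adjusting.

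On good sites, \eqref{assumption:stability} gives
\[
\sum_{i\text{ good}}\phi(\ell_i(\sigma))\le e^2\sum_i\phi(\ell_i(\mathbf 1))\le e^2\delta_*R=\frac R4 .
\]
This explains the choice $\delta_*=1/(4e^2)$.

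The second step bounds the number of bad sites using only the sparse spectral width. Let $U$ be the set of bad sites, and suppose first that $|U|\le R$, possibly after passing to a subset. Then $|T\cup U|\le 2R=n-M$, so the principal submatrix $J_{T\cup U,T\cup U}$ has spectral width at most $\omega_M(J)<1/2$. Shifting by a multiple of the identity, $J'=J-cI$ has $\|J'_{T\cup U}\|_{\op}\le\omega_M(J)/2$. Testing the bilinear form on $\mathbf 1_U,\mathbf 1_T$ gives
\[
\frac{|U|}{4}<\sum_{i\in U}\sum_{j\in T,j\ne i}J_{ij}\le \frac{\omega_M(J)}2\sqrt{|U|\,r}+(\text{diagonal correction on }U\cap T).
\]
For the diagonal correction, the removed diagonal entries are handled by the same shift, which costs at most a width-sized term per site of $U\cap T$. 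This should yield $|U|\le \kappa r$ with $\kappa<1$ an explicit function of $\omega_M(J)<1/2$. If the a priori $|U|>R$, apply the argument to a subset of size $R\ge r$ to get a contradiction. I would tune the threshold for \emph{bad} so that $\kappa$ is a small constant.

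The third step assembles the estimates. For the plus spins ($i\notin T$), the bound $\phi\le 1$ on bad sites gives
\[
nq_-(\sigma)\le \frac R4+|U\setminus T|.
\]
For the minus spins ($i\in T$), the good ones satisfy $\phi(\ell_i(\sigma))\le e^2\phi(\ell_i(\mathbf 1))$, so
\[
nq_+(\sigma)\ge r-\frac R4-|U\cap T|.
\]
At $r=r_0=\lfloor 2R/3\rfloor$ the first bound gives $q_-\le R/(3n)$ once $|U|\le R/12$. For $r>r_0$ we need
\[
\frac R4+\kappa r\le \tfrac45\Bigl(r-\frac R4-\kappa r\Bigr),
\]
which holds for $r>2R/3$ when $\kappa$ is small. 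Part (b) follows the same route with $\psi$ in place of $\phi$. On good sites I would prove $\psi(u-v)\le C\psi(u)$ for $v\le1$; the logarithmic factor changes by at most $O(v)$, so an inequality of the form $\psi(u-v)\le e^{2}(\psi(u)+2\phi(u))$ suffices, and then $\psi\ge\log4\,\phi$ gives the needed bound. On bad sites I would use $\sup\psi<\infty$ (attained near $u=0$, of order $\log 4$). Together these give $\frac1n\sum_{\sigma_i=+1}\psi(\ell_i(\sigma))\le R/n$.

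The main obstacle I expect is the constant bookkeeping in the second step. The condition $\omega_M(J)<1/2$ alone only gives $|U|\lesssim r$ with a constant near $1$ at threshold $1$. To make the bad set a small fraction of $r$, I will likely need to raise the bad threshold, for example to $v_i>2$. That raises the good-site factor $e^{2v}$, which then conflicts with $\delta_*=1/(4e^2)$. Resolving this cleanly may require a weighted count: bounding $\sum_{i\in U}\phi(\ell_i(\sigma))$ via $\sum_i (v_i)_+^2\le \|J_{\cdot,T}\mathbf 1_T\|^2$ restricted to sparse sets, rather than a hard threshold. I would try that weighted version first if the hard threshold fails.
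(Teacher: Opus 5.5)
\textbf{There is a genuine gap.} Your primary route does not close, for exactly the reason you flag at the end, so as written the proposal is not a proof.

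\textbf{Part (a).} Take a hard threshold at $1$ and use $\phi\le 1$ on the bad plus-sites $B:=\{i\notin T: v_i>1\}$. Your counting argument then only gives $|B|<4\omega_M(J)^2 r$, which can be nearly $r$ when $\omega_M(J)$ is close to $1/2$. But at $r=r_0\approx 2R/3$ you need $|B|\le R/12$, and your final inequality for the $4/5$ ratio requires $\kappa\le 5/72$. For bad minus-sites the count is even vacuous. Removing the diagonal only gives $\|A_{T,T}\|_{\op}\le\omega_M(J)$ with $A:=J-\diag(J)$, hence only $|U\cap T|<16\omega_M(J)^2 r$. As you suspected, no other choice of hard threshold rescues this when $\omega_M(J)$ is near $1/2$.

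The missing idea is the weighted version you allude to, which is what the paper does. Keep the threshold at $1$, but control the excess on bad sites quadratically using $|\phi'|\le 1/2$:
\[
\phi(\ell_i(\mathbf 1)-v_i)\le\phi(\ell_i(\mathbf 1)-1)+\frac{v_i-1}{2}\le e^2\phi(\ell_i(\mathbf 1))+\frac{v_i^2}{8}.
\]
Your subset contradiction gives $|B|<R$, so $|B\cup T|\le 2R$ and $\|J_{B,T}\|_{\op}\le\omega_M(J)/2<1/4$. It follows that $\sum_{i\in B}v_i^2=16\|J_{B,T}\mathbf 1\|_2^2\le r$, hence $nq_-(\sigma)\le R/4+r/8$, which is $\le R/3$ at $r_0$.

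For the drift comparison, sum the same pointwise bound over all sites. With $S:=B\cup T$ and $\|A_{S,S}\|_{\op}\le\omega_M(J)$, this gives $\sum_i\phi(\ell_i(\sigma))\le R/4+2\omega_M(J)^2 r<R/4+r/2$. Then use the identity $n(q_+-q_-)=r-\sum_i\phi(\ell_i(\sigma))$, rather than separate bounds on $q_+$ and $q_-$ (which spend the $R/4$ twice). This gives $n(q_+-q_-)\ge r/8$, and hence $q_-\le\frac45 q_+$ for $r>2R/3$.

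\textbf{Part (b).} This part contains a false step: $\psi$ is not bounded. Since $\log\frac{1}{\phi(u)(1-\phi(u))}=2\log(2\cosh u)$, one has $\psi(u)\sim 2|u|$ as $u\to-\infty$. So bad sites with very negative $\ell_i(\sigma)$ cannot be handled by $\sup\psi$.

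What works is the Lipschitz bound $0\le-\psi'\le 3$. It gives $\psi(\ell_i(\mathbf 1)-v_i)\le\psi(\ell_i(\mathbf 1)-1)+3(v_i-1)\le\psi(\ell_i(\mathbf 1)-1)+\frac34 v_i^2$, followed by the same estimate $\sum_{i\in B}v_i^2\le r$.

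Your good-site estimate $\psi(u-v)\le e^2(1+2/\log 4)\psi(u)$ is correct but too lossy. With $\delta_*=1/(4e^2)$ it already spends about $0.61R$, while the bad-site term $\frac34\sum_{i\in B}v_i^2$ can be close to $\frac34 R$ when $r$ is close to $R$, so the total can exceed $R$. The sharp factor comes from $-\psi'\le 2\psi$: this means $u\mapsto e^{2u}\psi(u)$ is nondecreasing, so $\psi(u-1)\le e^2\psi(u)$ with no loss. Then $e^2\delta_*R+\frac34 r\le \frac R4+\frac{3R}{4}=R$.
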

\begin{proof}
Fix $1\le r\le R$ and $\sigma\in\Omega_{n-2r}$. Define
\[
    T:=\{j:\sigma_j=-1\},\qquad v_i:=4\sum_{j\in T, j\ne i}J_{ij}.
\]
Then $|T|=r$ and $\ell_i(\sigma)=\ell_i(\1)-v_i$. Let $B:=\{i:\sigma_i=+1,\ v_i>1\}$. If $\sigma_i=+1$ and $i\notin B$, then $v_i\le1$, so
\[
    \phi(\ell_i(\1)-v_i)\le\phi(\ell_i(\1)-1)\le e^2\phi(\ell_i(\1)).
\]
If $i\in B$, then $0\le-\phi'\le1/2$ gives
\[
    \phi(\ell_i(\1)-v_i)-\phi(\ell_i(\1)-1)
    =\int_1^{v_i}-\phi'(\ell_i(\1)-s)\,ds
    \le\frac{v_i-1}{2}\le\frac{v_i^2}{8},
\]
where the last inequality follows from $(v_i-2)^2\ge0$. Combining both cases and applying \eqref{assumption:stability}, we obtain
\begin{equation}\label{ineq:sigma-stability}
    q_-(\sigma)\le\frac{e^2}{n}\sum_{i=1}^n c_i(\1)+\frac1{8n}\sum_{i\in B}v_i^2
    \le\frac R{4n}+\frac1{8n}\sum_{i\in B}v_i^2.
\end{equation}
It remains to estimate the last sum. First, we claim that $|B|<R$. Otherwise, choose $E\subseteq B$ with $|E|=R$ and set $S:=E\cup T$. Since $E$ and $T$ are disjoint, subtracting a scalar matrix from $J_{S,S}$ leaves the block $J_{E,T}$ unchanged. Thus, using $|S|=R+r\le2R$,
\[
    \|J_{E,T}\|_{\mathrm{op}}
    \le\inf_{t\in\R}\|J_{S,S}-tI\|_{\mathrm{op}}
    =\frac{\lambda_{\max}(J_{S,S})-\lambda_{\min}(J_{S,S})}{2}
    \le\frac{\omega_M(J)}2<\frac14.
\]
Since $v_i>1$ for every $i\in E$, this gives
\[
    R<\sum_{i\in E}v_i
    =4\1^\top J_{E,T}\1
    \le4\|J_{E,T}\|_{\mathrm{op}}\sqrt{Rr}<R,
\]
a contradiction. Hence $|B|<R$. Applying the same block estimate with $S=B\cup T$, we obtain
\[
    \sum_{i\in B}v_i^2
    =16\|J_{B,T}\1\|_2^2
    \le16\|J_{B,T}\|_{\mathrm{op}}^2r\le r.
\]
Substituting into \eqref{ineq:sigma-stability} yields
\[
    q_-(\sigma)\le\frac R{4n}+\frac r{8n}.
\]
Since $r_0\le2R/3$, this proves $q_-(\sigma)\le R/(3n)$ on $\Omega_{n-2r_0}$ when $r_0\ge1$. If $r_0=0$, the same conclusion follows directly from \eqref{assumption:stability}.

To control the resampling probabilities at all coordinates, set $S:=B\cup T$. Every index satisfying $v_i>1$ belongs to $S$. Set $A=J-\operatorname{diag}(J)$. Centering $J_{S,S}$ at its spectral midpoint and using that the norm of its diagonal part is at most its operator norm, we obtain
\[
    \|A_{S,S}\|_{\mathrm{op}}
    \le2\inf_{t\in\R}\|J_{S,S}-tI\|_{\mathrm{op}}
    \le\omega_M(J).
\]
The preceding estimate for $\phi$, now summed over all coordinates, therefore gives
\begin{equation}\label{eq:total-minus-rate}
\begin{split}
    \sum_{i=1}^n\phi(\ell_i(\sigma))
    &\le e^2\sum_{i=1}^n c_i(\1)+\frac18\sum_{i\in S}v_i^2\\
    &\le\frac R4+2\|A_{S,T}\1\|_2^2
    \le\frac R4+2\omega_M(J)^2r<\frac R4+\frac r2.
\end{split}
\end{equation}
For $r_0<r\le R$, we have $r>2R/3$, and hence
\[
    q_-(\sigma)\le\frac r{2n},\qquad
    q_+(\sigma)-q_-(\sigma)
    =\frac1n\left(r-\sum_{i=1}^n\phi(\ell_i(\sigma))\right)
    \ge\frac r{8n}.
\]
Consequently,
\[
    \frac{q_-(\sigma)}{q_+(\sigma)}
    \le\frac{r/(2n)}{r/(2n)+r/(8n)}=\frac45,
\]
which proves part (a). We now prove part (b). Differentiation gives
\[
    -\psi'(t)=2\bigl[\phi(t)^2+(1-\phi(t))(\psi(t)-\phi(t))\bigr].
\]
Since $\psi(t)\ge\phi(t)\log4\ge\phi(t)$ and $0<\phi(t)<1$, we obtain $0\le-\psi'(t)\le2\psi(t)$. Moreover, using $x\log(1/x)\le1/e$,
\[
    -\psi'(t)\le2+2(1-\phi(t))\psi(t)
    =2+2\phi(t)(1-\phi(t))\log\frac1{\phi(t)(1-\phi(t))}
    \le2+\frac2e<3.
\]
The bound $-\psi'\le2\psi$ implies $(e^{2t}\psi(t))'\ge0,$ and therefore $\psi(\ell_i(\1)-1)\le e^2\psi(\ell_i(\1))$. Thus, if $\sigma_i=+1$ and $i\notin B$, monotonicity gives
\[
    \psi(\ell_i(\1)-v_i)\le\psi(\ell_i(\1)-1)\le e^2\psi(\ell_i(\1)).
\]
For $i\in B$, the bound $-\psi'\le3$ gives
\[
    \psi(\ell_i(\1)-v_i)-\psi(\ell_i(\1)-1)
    =\int_1^{v_i}-\psi'(\ell_i(\1)-s)\,ds
    \le3(v_i-1)\le\frac34v_i^2.
\]
Summing and applying \eqref{assumption:stability-entropy}, we conclude that
\[
\begin{aligned}
    \sum_{i:\sigma_i=+1}\psi(\ell_i(\sigma))
    \le e^2\sum_{i=1}^n\psi(\ell_i(\1))+\frac34\sum_{i\in B}v_i^2
    \le e^2\delta_*R+\frac{3r}{4}
    \le R,
\end{aligned}
\]
which proves part (b).
\end{proof}

\noindent We now use Lemma~\ref{prop:boundary-stability} to control $\mathcal{Q}_M(f;a)$. Recall we denoted $\mu\equiv \mu_{J,h,M}$ throughout this section. For $k=M,M+2,\ldots,n$, denote
\[
    \widetilde{\mathcal Q}_k(f;a):=\sum_{\sigma\in\Omega_k}\mu(\sigma)q_-(\sigma)(f(\sigma)-a)^2,
    \qquad f:\widehat\Omega_M\to\R,\quad a\in\R.
\]
These are the quadratic boundary forms from Section~\ref{sec:proof-technique} with the normalization fixed at $\widehat\Omega_M$:
\[
    \widetilde{\mathcal Q}_k(f;a)=\mu(\widehat\Omega_k)\mathcal Q_k(f;a),
    \qquad \widetilde{\mathcal Q}_M(f;a)=\mathcal Q_M(f;a).
\]
For $k=M,M+2,\ldots,n-2$, write the contribution to $\mathcal E_\mu(f)$ between $\Omega_k$ and $\Omega_{k+2}$ in terms of upward transitions as
\[
    \mathcal E_\mu^k(f):=\frac1n\sum_{\sigma\in\Omega_k}\sum_{i:\sigma_i=-1}\mu(\sigma)c_i(\sigma)\bigl(f(\sigma)-f(\sigma^{(i)})\bigr)^2.
\]
By reversibility, we have
\[
\mathcal E_\mu(f)=\sum_{k=M,M+2,\ldots,n-2}\mathcal E_\mu^k(f).
\]
\begin{lemma}\label{lemma:boundary-transport}
Fix integers $K \in \{M, M+2, \dots, n-2\}$ and $1\le L\le (n-K)/2$. Suppose that, for some $\alpha\in(0,1)$
\begin{equation}\label{eq:boundary-transport-drift}
    q_-(\sigma)\le\alpha q_+(\sigma),
    \qquad \sigma\in\Omega_k,\quad k=K,K+2,\ldots,K+2L-2.
\end{equation}
Choose $\theta>0$ such that $\rho:=\alpha(1+\theta)<1$. Then, for every $f:\widehat\Omega_M\to\R$, $a\in\R$, 
\begin{equation}\label{ineq:boundary-transport-step}
    \widetilde{\mathcal Q}_K(f;a)
     \leq \rho^L \, \widetilde{\mathcal Q}_{K+2L}(f;a)
       +\frac{\rho}{\theta} \, \mathcal E_\mu(f) 
\end{equation}
\end{lemma}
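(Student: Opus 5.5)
The plan is to prove a one-slice transport inequality
\[
\widetilde{\mathcal Q}_k(f;a)\le\rho\,\widetilde{\mathcal Q}_{k+2}(f;a)+\frac{\rho}{\theta}\,\mathcal E^k_\mu(f),\qquad k=K,K+2,\ldots,K+2L-2,
\]
and then iterate it $L$ times. The idea is to push the boundary mass upward one magnetization slice at a time. The drift hypothesis \eqref{eq:boundary-transport-drift} converts downward exit weight into upward weight. Reversibility then reinterprets upward weight at slice $k$ as downward weight at slice $k+2$, at the cost of a Dirichlet-form term.

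\textbf{One-slice step.} Fix $k$ in the range and $\sigma\in\Omega_k$. By \eqref{eq:boundary-transport-drift},
\[
\widetilde{\mathcal Q}_k(f;a)\le\alpha\sum_{\sigma\in\Omega_k}\mu(\sigma)\,q_+(\sigma)\bigl(f(\sigma)-a\bigr)^2
=\frac{\alpha}{n}\sum_{\sigma\in\Omega_k}\sum_{i:\sigma_i=-1}\mu(\sigma)c_i(\sigma)\bigl(f(\sigma)-a\bigr)^2 .
\]
For each such pair $(\sigma,i)$, I apply the elementary inequality
\[
(x+y)^2\le(1+\theta)x^2+(1+\theta^{-1})y^2
\]
with $x=f(\sigma^{(i)})-a$ and $y=f(\sigma)-f(\sigma^{(i)})$. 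This is legitimate because $\sigma^{(i)}\in\Omega_{k+2}\subseteq\widehat\Omega_M$, so $f(\sigma^{(i)})$ is defined. The $y$-part yields
\[
\alpha(1+\theta^{-1})\,\mathcal E^k_\mu(f)=\frac{\rho}{\theta}\,\mathcal E^k_\mu(f),
\]
by the definition of $\mathcal E^k_\mu$.

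\textbf{Reindexing the $x$-part.} I use detailed balance for $\mu_{J,h}$, which after normalizing by $\mu_{J,h}(\widehat\Omega_M)$ gives $\mu(\sigma)c_i(\sigma)=\mu(\tau)c_i(\tau)$ for $\tau=\sigma^{(i)}$. This holds because both configurations lie in the core, so no reflection issue arises. The map $(\sigma,i)\mapsto(\tau,i)$ is a bijection from pairs with $\sigma\in\Omega_k$, $\sigma_i=-1$ onto pairs with $\tau\in\Omega_{k+2}$, $\tau_i=+1$. Hence the $x$-part equals
\[
\alpha(1+\theta)\sum_{\tau\in\Omega_{k+2}}\mu(\tau)\,q_-(\tau)\bigl(f(\tau)-a\bigr)^2=\rho\,\widetilde{\mathcal Q}_{k+2}(f;a),
\]
which proves the one-slice inequality.

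\textbf{Iteration.} Iterating from $k=K$ up to $k=K+2L-2$ gives
\[
\widetilde{\mathcal Q}_K(f;a)\le\rho^L\,\widetilde{\mathcal Q}_{K+2L}(f;a)+\frac{\rho}{\theta}\sum_{j=0}^{L-1}\rho^j\,\mathcal E^{K+2j}_\mu(f).
\]
Since $\rho<1$, each $\mathcal E^k_\mu(f)\ge0$, and $\sum_k\mathcal E^k_\mu(f)=\mathcal E_\mu(f)$, the sum is at most $\mathcal E_\mu(f)$, which gives \eqref{ineq:boundary-transport-step}.

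The only delicate point is bookkeeping. One must check that $c_i(\cdot)=c_i(\cdot;\mu_{J,h})$ coincides with the $\mu_{J,h,M}$ rates on edges inside the core, so that $\mathcal E^k_\mu$ is really a piece of $\mathcal E_\mu(f)$. One must also check that $K+2L\le n$, so every slice used is a nonempty subset of $\widehat\Omega_M$. Both follow directly from the stated ranges of $K$ and $L$.
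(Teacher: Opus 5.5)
Your proposal is correct and follows the paper's proof essentially line by line. The drift hypothesis converts $q_-$-weight into $q_+$-weight, the inequality $(x+y)^2\le(1+\theta)x^2+(1+\theta^{-1})y^2$ splits off $\frac{\rho}{\theta}\mathcal E^k_\mu(f)$, detailed balance reindexes the remainder as $\rho\,\widetilde{\mathcal Q}_{k+2}(f;a)$, and iterating with $\rho<1$ together with $\sum_k\mathcal E^k_\mu(f)=\mathcal E_\mu(f)$ gives the claim.
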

\begin{proof}
Set $g:=f-a$. For each $k = K, K+2,\dots, K+2L-2$, the drift assumption \eqref{eq:boundary-transport-drift} and the definition of $q_+$ give
\[
    \widetilde{\mathcal Q}_k(f;a)
    \le\alpha\sum_{\sigma\in\Omega_k}\mu(\sigma)q_+(\sigma)g(\sigma)^2
    =\frac{\alpha}{n}\sum_{\sigma\in\Omega_k}\sum_{i:\sigma_i=-1}\mu(\sigma)c_i(\sigma)g(\sigma)^2.
\]
Using $(a+b)^2\leq (1+\theta) a^2+(1+1/\theta)b^2$, we have for each $i$ with $\sigma_i=-1$,
\[
    g(\sigma)^2
    \le(1+\theta)g(\sigma^{(i)})^2
       +\left(1+\frac1\theta\right)\bigl(g(\sigma)-g(\sigma^{(i)})\bigr)^2.
\]
Moreover, we have $\mu(\sigma)c_i(\sigma)=\mu(\sigma^{(i)})c_i(\sigma^{(i)})$ by reversibility, thus 
\[
    \frac1n\sum_{\sigma\in\Omega_k}\sum_{i:\sigma_i=-1}\mu(\sigma)c_i(\sigma)g(\sigma^{(i)})^2
    =\sum_{\tau\in\Omega_{k+2}}\mu(\tau)q_-(\tau)g(\tau)^2
    =\widetilde{\mathcal Q}_{k+2}(f;a).
\]
Combining these estimates with the definition of $\mathcal E_\mu^k$, we obtain
\[
     \widetilde{\mathcal Q}_k(f;a)
    \leq \rho\,\widetilde{\mathcal Q}_{k+2}(f;a)
       +\frac{\rho}{\theta}\mathcal E_\mu^k(f).
\]
Iterating from $K$ to $K+2L-2$ gives
\[
\begin{aligned}
    \widetilde{\mathcal Q}_K(f;a)
    \leq \rho^L\widetilde{\mathcal Q}_{K+2L}(f;a)
       +\frac{\rho}{\theta}
          \sum_{j=0}^{L-1}\rho^j\mathcal E_\mu^{K+2j}(f)
    \leq \rho^L\widetilde{\mathcal Q}_{K+2L}(f;a)
       +\frac{\rho}{\theta}\mathcal E_\mu(f),
\end{aligned}
\]
where the last inequality follows from $\rho \in (0,1)$ and $\mathcal E_\mu(f)
=\sum_{k=M,M+2,\ldots,n-2}\mathcal E_\mu^k(f)$.
\end{proof}

We remark that the preceding proof resembles the classical Dirichlet comparison method~\cite{DSC93}. Indeed, adjoining a vertex $*$
with $f(*)=a$, we may view $\widetilde{\mathcal Q}_k(f;a)$ and
$\mathcal E_\mu^k(f)+\widetilde{\mathcal Q}_{k+2}(f;a)$ as
Dirichlet forms on the same augmented space. Then, the comparison
replaces each boundary edge by two-step paths, $(\sigma,*)\mapsto(\sigma,\sigma^{(i)},*),\sigma_i=-1.$ 

We now turn to the entropy estimates. In the lemma below, we bound the entropic boundary term by quadratic terms, allowing us to subsequently apply Lemma~\ref{lemma:boundary-transport}. 

\begin{lemma}\label{lemma:entropic-boundary-reduction}
Assume $\omega_M(J)<1/2$ and \eqref{assumption:stability-entropy}. Then, for every $f:\widehat\Omega_M\to[0,\infty)$ and $a\ge0$,
\begin{equation}\label{eq:entropic-boundary-reduction}
    \mathcal B_M(f^2;a^2)\lesssim \widetilde{\mathcal Q}_{M+2}(f;a)+\mathcal E_\mu(f).
\end{equation}
\end{lemma}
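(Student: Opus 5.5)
The plan is to reduce the entropic boundary term, edge by edge, to a quadratic boundary term on the boundary slice $\Omega_M$ using the two-point log-Sobolev inequality. We then transport that quadratic term one slice upward with the same reversibility argument as in Lemma~\ref{lemma:boundary-transport}.

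\emph{Step 1: identify the boundary edges and their weights.} The pairs $\sigma\in\widehat\Omega_M$, $\tau\notin\widehat\Omega_M$ with $\sigma\sim\tau$ are exactly $\sigma\in\Omega_M$ and $\tau=\sigma^{(i)}$ with $\sigma_i=+1$. Write $p:=\frac{\mu_{J,h}(\sigma)}{\mu_{J,h}(\sigma)+\mu_{J,h}(\tau)}$. Then $1-p=\phi(\ell_i(\sigma))$, and the weight in $\mathcal B_M$ is
\[
\frac{\mu_{J,h}(\sigma)+\mu_{J,h}(\tau)}{\mu_{J,h}(\widehat\Omega_M)}=\frac{\mu(\sigma)}{p}.
\]

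\emph{Step 2: two-point log-Sobolev.} By the two-point log-Sobolev inequality~\cite{DSC96}, for $x,y\ge0$,
\[
\ent_p(x^2,y^2)\le \frac{p(1-p)\log\frac{p}{1-p}}{2p-1}\,(x-y)^2\lesssim p(1-p)\log\frac{1}{p(1-p)}\,(x-y)^2,
\]
where the constant is interpreted as $p(1-p)\cdot 2$ at $p=1/2$. Checking that this elementary comparison holds uniformly in $p\in(0,1)$ is the only delicate computation. The two ends to verify are $p\to1$, where the constant behaves like $(1-p)\log\frac1{1-p}$, and $p\to0$, where it behaves like $p\log\frac1p$ and the $1/p$ weight cancels it.

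Applying this with $x=f(\sigma)$ and $y=a$, each boundary summand is bounded by a constant times
\[
\mu(\sigma)\,\phi(\ell_i(\sigma))\log\frac1{\phi(\ell_i(\sigma))(1-\phi(\ell_i(\sigma)))}\,(f(\sigma)-a)^2=\mu(\sigma)\,\psi(\ell_i(\sigma))\,(f(\sigma)-a)^2.
\]
Summing, and then using Lemma~\ref{prop:boundary-stability}(b) with $r=R$ (so that $\sigma\in\Omega_M=\Omega_{n-2R}$), gives
\[
\mathcal B_M(f^2;a^2)\lesssim\sum_{\sigma\in\Omega_M}\mu(\sigma)\Big(\frac1n\sum_{i:\sigma_i=+1}\psi(\ell_i(\sigma))\Big)(f(\sigma)-a)^2\le \frac Rn\sum_{\sigma\in\Omega_M}\mu(\sigma)(f(\sigma)-a)^2 .
\]

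\emph{Step 3: a lower bound on the upward rate on $\Omega_M$.} For $\sigma\in\Omega_M$ there are $r=R$ minus spins. The estimate \eqref{eq:total-minus-rate} in the proof of Lemma~\ref{prop:boundary-stability} holds under $\omega_M(J)<1/2$ together with \eqref{assumption:stability}, which is implied by \eqref{assumption:stability-entropy}. It gives $\sum_i\phi(\ell_i(\sigma))<R/4+R/2$. Hence
\[
nq_+(\sigma)=\sum_{i:\sigma_i=-1}\bigl(1-\phi(\ell_i(\sigma))\bigr)\ge R-\tfrac34R=\tfrac R4,
\]
that is, $R/n\le 4q_+(\sigma)$.

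\emph{Step 4: transport one slice up.} Set $g:=f-a$. For each $i$ with $\sigma_i=-1$ use $g(\sigma)^2\le2g(\sigma^{(i)})^2+2(g(\sigma)-g(\sigma^{(i)}))^2$, together with reversibility $\mu(\sigma)c_i(\sigma)=\mu(\sigma^{(i)})c_i(\sigma^{(i)})$, exactly as in Lemma~\ref{lemma:boundary-transport}. This gives
\[
\sum_{\sigma\in\Omega_M}\mu(\sigma)q_+(\sigma)g(\sigma)^2\le 2\widetilde{\mathcal Q}_{M+2}(f;a)+2\mathcal E^M_\mu(f)\le 2\widetilde{\mathcal Q}_{M+2}(f;a)+2\mathcal E_\mu(f).
\]
Chaining Steps 2–4 yields \eqref{eq:entropic-boundary-reduction}.

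I expect the main obstacle to be Step 2, namely confirming that the two-point log-Sobolev constant is dominated by $\psi$ uniformly, including the regime $p\to0$ where the weight $1/p$ blows up. If the sharp form is awkward, I would instead split into the cases $p\ge1/2$ and $p<1/2$. In the second case, $1-p\ge1/2$ and $\psi(\ell_i)\asymp p\log(1/p)$ is bounded below, so the crude bound $\ent_p(x^2,y^2)\lesssim p\log(1/p)(x-y)^2$ (up to constants) suffices.
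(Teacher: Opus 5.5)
Your proposal is correct and follows essentially the paper's proof: the two-point log-Sobolev bound converts each boundary summand into $\mu(\sigma)\psi(\ell_i(\sigma))(f(\sigma)-a)^2$, Lemma~\ref{prop:boundary-stability}(b) at $r=R$ bounds the sum by $\frac{R}{n}\sum_{\sigma\in\Omega_M}\mu(\sigma)(f(\sigma)-a)^2$, \eqref{eq:total-minus-rate} gives $q_+\ge R/(4n)$ on $\Omega_M$, and reversibility transports the term up one slice. The only difference is in the two-point comparison: the paper proves $\frac{\log[p/(1-p)]}{2p-1}\le 2\log\frac{1}{p(1-p)}$ explicitly by a derivative argument on $p\ge 1/2$ plus symmetry, whereas your endpoint asymptotics yield a uniform constant only once you add that the ratio of the two sides is continuous on $(0,1)$ and tends to $1$ at both ends (and in your fallback remark, $\psi\asymp\log(1/p)$ rather than $p\log(1/p)$ when $p<1/2$, which is what makes the crude bound work there).
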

\begin{proof}
Since \eqref{assumption:stability-entropy} implies \eqref{assumption:stability}, the estimate \eqref{eq:total-minus-rate} in Lemma~\ref{prop:boundary-stability}, applied with $r=R$, gives
\[
    q_+(\sigma)=\frac1n\left(R-\sum_{i:\sigma_i=-1}\phi(\ell_i(\sigma))\right)\ge\frac1n\left(R-\sum_{i=1}^n\phi(\ell_i(\sigma))\right)\ge\frac R{4n},\qquad \sigma\in\Omega_M.
\]
We next establish the two-point estimate
\begin{equation}\label{eq:claim:log-sobolev}
    \ent_p(x^2,y^2)
    \leq 2p(1-p)\log\bigg(\frac1{p(1-p)}\bigg)(x-y)^2,
    \qquad p\in(0,1),\quad x,y\in\R.
\end{equation}
Indeed, the two-point log-Sobolev
inequality~\cite[Appendix~A, Theorem~A.2]{DSC96} gives 
\[
    \ent_p(x^2,y^2)
    \leq p(1-p)\frac{\log\left[p/(1-p)\right]}{2p-1}(|x|-|y|)^2,
\]
where the quotient is interpreted by continuity at $p=1/2$. Note that
\[
\frac{d}{dp}\bigg[(2p-1)\log\bigg(\frac1{p(1-p)}\bigg)
-\frac12\log\bigg(\frac p{1-p}\bigg)\bigg]=2\log\bigg(\frac{1}{p(1-p)}\bigg)+\frac{1}{2p(1-p)}-4>0,
\]
where the last inequality holds since $p(1-p)\leq 1/4$ and $2\log 4-2>0$. The expression in the brackets vanishes for $p=1/2$, thus for $p\geq 1/2$, we have $\frac{\log\left[p/(1-p)\right]}{2p-1}\leq 2\log(\frac{1}{p(1-p)})$. Together with $(|x|-|y|)^2\leq(x-y)^2$, this proves
\eqref{eq:claim:log-sobolev}.

For $\sigma_i=+1$, we have $c_i(\sigma)=\phi(\ell_i(\sigma))$. Thus, by the definition of $\mathcal B_M$, \eqref{eq:claim:log-sobolev}, and Lemma~\ref{prop:boundary-stability}-(b),
\begin{align*}
    \mathcal B_M(f^2;a^2)&=\frac1n\sum_{\sigma\in\Omega_M}\sum_{i:\sigma_i=+1}
        \frac{\mu(\sigma)}{1-c_i(\sigma)}
        \ent_{1-c_i(\sigma)}(f(\sigma)^2,a^2)\\
    &\leq\frac2n\sum_{\sigma\in\Omega_M}
        \mu(\sigma)(f(\sigma)-a)^2
        \sum_{i:\sigma_i=+1}\psi(\ell_i(\sigma))\\
    &\leq\frac{2R}{n}\sum_{\sigma\in\Omega_M}
        \mu(\sigma)(f(\sigma)-a)^2
    \leq8\sum_{\sigma\in\Omega_M}
        \mu(\sigma)q_+(\sigma)(f(\sigma)-a)^2.
\end{align*}
Finally, using $(f(\sigma)-a)^2
\leq2(f(\sigma^{(i)})-a)^2
   +2(f(\sigma)-f(\sigma^{(i)}))^2$ and reversibility give
\[
    \sum_{\sigma\in\Omega_M}\mu(\sigma)q_+(\sigma)(f(\sigma)-a)^2\le2\widetilde{\mathcal Q}_{M+2}(f;a)+2\mathcal E_\mu^M(f).
\]
Therefore,
\[
    \mathcal B_M(f^2;a^2)\le16\Big(\widetilde{\mathcal Q}_{M+2}(f;a)+\mathcal E_\mu^M(f)\Big)\le16\Big(\widetilde{\mathcal Q}_{M+2}(f;a)+\mathcal E_\mu(f)\Big).
\]
which proves the lemma. 
\end{proof}

We prove Proposition~\ref{prop:defect-absorption} by using
Lemma~\ref{lemma:boundary-transport}. For entropy,
Lemma~\ref{lemma:entropic-boundary-reduction} reduces the proof
to the same argument applied to $\sqrt{f}$.

\begin{proof}[Proof of Proposition~\ref{prop:defect-absorption}]
We first prove the boundary estimates.  Set $r_0:=\lfloor2R/3\rfloor$ and $L:=R-r_0=\lceil R/3\rceil$. Assume \eqref{assumption:stability} and fix $f:\widehat\Omega_M\to\R$. By Lemma~\ref{prop:boundary-stability}, $q_-(\sigma)\le R/(3n)$ for $\sigma\in\Omega_{n-2r_0}$. Hence, for every $a\in\R$,
\begin{equation}\label{eq:inner-layer-defect}
    \widetilde{\mathcal Q}_{M+2L}(f;a) = \widetilde{\mathcal Q}_{n-2r_0}(f;a)
    =\sum_{\sigma\in\Omega_{n-2r_0}}\mu(\sigma)q_-(\sigma)(f(\sigma)-a)^2
    \le\frac R{3n}\EE_\mu[(f-a)^2].
\end{equation}
The same lemma gives $q_-\le(4/5)q_+$ on $\Omega_{n-2r}$ whenever $r_0<r\le R$. Thus, applying Lemma~\ref{lemma:boundary-transport} with $\alpha = 4/5$, $K = M$, and using the terminal estimate \eqref{eq:inner-layer-defect}, we obtain, for some universal $\rho \in (0,1)$,
\[
  \mathcal{Q}_M(f;a) =  \widetilde{\mathcal Q}_M(f;a) \lesssim \mathcal{E}_{\mu}(f) + \rho^{L} \, \widetilde{\mathcal Q}_{n-2r_0}(f;a)
    \lesssim \mathcal E_\mu(f)+\frac R{n}\rho^{L}\EE_\mu[(f-a)^2],
\]
Hence, for $a=\EE_\mu[f]$,
\[
    \mathcal Q_M(f)\lesssim\mathcal E_\mu(f)+\frac Rn\rho^L\Var_\mu(f).
\]
We next prove the entropic boundary estimate. Assume \eqref{assumption:stability-entropy}, which implies \eqref{assumption:stability}. For $f\ge0$ and $a\ge0$, Lemma~\ref{lemma:entropic-boundary-reduction} and Lemma~\ref{lemma:boundary-transport} give
\begin{equation}\label{eq:entropic-boundary-moment}
\begin{aligned}
    \mathcal B_M(f;a^2)
    \lesssim\widetilde{\mathcal Q}_{M+2}(\sqrt f;a)+\mathcal E_\mu(\sqrt f)
    \lesssim\mathcal E_\mu(\sqrt f)+\frac Rn\rho^{L-1}\EE_\mu[(\sqrt f-a)^2].
\end{aligned}
\end{equation}
This also covers $L=1$, since then $M+2=n-2r_0$. Applying $\Var(\sqrt f)\le\Ent(f)$ to each conditional distribution gives
\[
    \mathcal E_\mu(\sqrt f)
    =\frac1n\sum_{i=1}^n\EE_\mu\!\left[\Var_{\mu(\cdot\mid\sigma_{-i})}(\sqrt f)\right]
    \le\frac1n\sum_{i=1}^n\EE_\mu\!\left[\Ent_{\mu(\cdot\mid\sigma_{-i})}(f)\right]
    =\mathcal H_\mu(f).
\]
Taking $a=\EE_\mu[\sqrt f]$ in \eqref{eq:entropic-boundary-moment} and using $\Var_\mu(\sqrt f)\le\Ent_\mu(f)$, we obtain
\[
    \mathcal B_M(f)\lesssim\mathcal H_\mu(f)+\frac Rn\rho^{L-1}\Ent_\mu(f).
\]
Since $L\ge R/3$, there is a universal $c>0$ such that
\[
    R\rho^L\le R\rho^{L-1}\le\rho^{-1}R\rho^{R/3}\lesssim e^{-cR}.
\]
Absorbing the implicit constants into $C$, proves the two boundary estimates.

\medskip

It remains to prove the second inequality in each part of the proposition. We first prove both comparison estimates when $J=0$, so that $\mu=\mu_{0,h,M}$. Under \eqref{assumption:stability}, the ambient product measure satisfies
\[
    c_i(\sigma)=c_i(\1)\quad\text{if }\sigma_i=+1,
    \qquad c_i(\sigma)=1-c_i(\1)\quad\text{if }\sigma_i=-1.
\]
Consequently, for $\sigma\in\Omega_{n-2r}$,
\[
    q_-(\sigma)\le\frac{\delta_*R}{n}\le\frac R{3n},
    \qquad q_+(\sigma)\ge\frac{r-\delta_*R}{n}.
\]
For $r_0<r\le R$, we therefore have
\[
    \frac{q_-(\sigma)}{q_+(\sigma)}
    \le\frac{\delta_*}{2/3-\delta_*}<\frac14.
\]
Fix $f:\widehat\Omega_M\to\R$ and put $a:=\EE_\mu[f]$. Applying Lemma~\ref{lemma:general-boundary-decomp} and Lemma~\ref{lemma:boundary-transport}, 
\[
\begin{split}
    \Var_\mu(f)
    &\leq n\bigl(\mathcal E_\mu(f)+\mathcal Q_M(f;a)\bigr)\\
    &\leq \frac32n\mathcal E_\mu(f)
        +n2^{-L}\widetilde{\mathcal Q}_{n-2r_0}(f;a)
    \leq \frac32n\mathcal E_\mu(f)
        +\frac R3\,2^{-L}\Var_\mu(f).
\end{split}
\]
where the last inequality follows from \eqref{eq:inner-layer-defect}. Since $R\le3L$ and $L\le2^{L-1}$,
\[
    \frac R3\,2^{-L}\le L2^{-L}\le\frac12.
\]
Absorbing the variance term yields
\begin{equation}\label{eq:product-poincare}
    \Var_\mu(f)\lesssim n\mathcal E_\mu(f).
\end{equation}
Now assume \eqref{assumption:stability-entropy}, fix $f:\widehat\Omega_M\to[0,\infty)$, and put $a:=\EE_\mu[\sqrt f]$. Combining \eqref{eq:entropic-boundary-moment} with \eqref{eq:product-poincare}, we obtain
\[
    \mathcal B_M(f;a^2)
    \lesssim\mathcal E_\mu(\sqrt f)+\frac Rn\rho^{L-1}\Var_\mu(\sqrt f)
    \lesssim\mathcal E_\mu(\sqrt f)\le\mathcal H_\mu(f),
\]
where we used $R\rho^{L-1}\lesssim1$. Lemma~\ref{lemma:general-boundary-decomp} therefore gives
\[
    \Ent_\mu(f)\le n\bigl(\mathcal H_\mu(f)+\mathcal B_M(f;a^2)\bigr)
    \lesssim n\mathcal H_\mu(f).
\]
Set $A: = J - \diag(J)$. For general $J$, we compare $\mu$ with
\[
    \nu:=\mu_{0,h+2A\1,M}
\]
by the bounded perturbation argument of Holley and Stroock~\cite{HS87}, which we give explicitly below. The ambient product measure satisfies
\begin{equation}\label{eq:shifted-product-all-plus}
    \mu_{0,h+2A\1}(\sigma_i=-1)=\phi(\ell_i(\1))=c_i(\1).
\end{equation}
Thus, \eqref{assumption:stability} and \eqref{assumption:stability-entropy} imply the corresponding assumptions for this product measure. The product estimates just proved give, under the respective assumptions,
\[
    \Var_\nu(f)\lesssim n\mathcal E_\nu(f),
    \qquad \Ent_\nu(f)\lesssim n\mathcal H_\nu(f).
\]
For $\sigma\in\widehat\Omega_M$, put $T:=\{i:\sigma_i=-1\}$ and let $\one_T$ denote its indicator vector. Since $\sigma=\1-2\one_T$,
\[
    \langle A\sigma,\sigma\rangle+\langle h,\sigma\rangle
    =\langle h+2A\1,\sigma\rangle+4\one_T^\top A\one_T-\langle A\1,\1\rangle.
\]
It follows that
\begin{equation}\label{eq:capped-product-density}
    \Psi(\sigma):=\frac{d\mu}{d\nu}(\sigma)
    \propto\exp\!\left(4\one_T^\top A\one_T\right).
\end{equation}
For every nonempty $T$ with $|T|\le R$, centering $J_{T,T}$ at its spectral midpoint gives
\[
    \|A_{T,T}\|_{\mathrm{op}}
    \le2\inf_{t\in\R}\|J_{T,T}-tI\|_{\mathrm{op}}
    \le\omega_M(J).
\]
Hence $|\one_T^\top A\one_T|\le |T|\omega_M(J)\le R\omega_M(J)$, with the same bound holding trivially for $T=\varnothing$. By \eqref{eq:capped-product-density},
\begin{equation}\label{eq:capped-product-oscillation}
    \operatorname{osc}_{\widehat\Omega_M}\log\Psi\le8R\omega_M(J).
\end{equation}
Set $\Psi_{\min}:=\min_{\widehat\Omega_M}\Psi$ and $\Psi_{\max}:=\max_{\widehat\Omega_M}\Psi$. Then
\[
    \Var_\mu(f)=\inf_{a\in\R}\EE_\nu[\Psi(f-a)^2]
    \le\Psi_{\max}\Var_\nu(f).
\]
Similarly, since $f\log(f/b)-f+b\ge0$ for $b>0$, the variational formula for entropy gives
\[
    \Ent_\mu(f)
    =\inf_{b>0}\EE_\nu\!\left[\Psi\left(f\log\frac fb-f+b\right)\right]
    \le\Psi_{\max}\Ent_\nu(f).
\]
To compare the Dirichlet and conditional entropy forms, fix a coordinate $i$ and values $\eta$ for the remaining spins such that $F:=\{\sigma\in\widehat\Omega_M:\sigma_{-i}=\eta\}$ is non-empty. Then,
\[
\begin{aligned}
    \mu(F)\Var_{\mu(\cdot\mid F)}(f)
    =\inf_{a\in\R}\sum_{\sigma\in F}\mu(\sigma)(f(\sigma)-a)^2
    \ge\Psi_{\min}\nu(F)\Var_{\nu(\cdot\mid F)}(f).
\end{aligned}
\]
The same argument for entropy gives
\[
\begin{aligned}
    \mu(F)\Ent_{\mu(\cdot\mid F)}(f)
    =\inf_{b>0}\sum_{\sigma\in F}\mu(\sigma)
      \left(f(\sigma)\log\frac{f(\sigma)}b-f(\sigma)+b\right)
    \ge\Psi_{\min}\nu(F)\Ent_{\nu(\cdot\mid F)}(f).
\end{aligned}
\]
Summing over all choices of $i$ and $\eta$, and dividing by $n$, we obtain
\[
    \mathcal E_\mu(f)\ge\Psi_{\min}\mathcal E_\nu(f),
    \qquad \mathcal H_\mu(f)\ge\Psi_{\min}\mathcal H_\nu(f).
\]
Combining these inequalities with the product estimates and \eqref{eq:capped-product-oscillation} proves, under the respective assumptions, we have
\[
    \Var_\mu(f)\lesssim n e^{8R\omega_M(J)}\mathcal E_\mu(f),
    \qquad \Ent_\mu(f)\lesssim n e^{8R\omega_M(J)}\mathcal H_\mu(f),
\]
which concludes the proof.
\end{proof}

\section{Lifting the core functional inequalities}\label{sec:lifting}

This section proves Theorem~\ref{theorem:lifting} from Theorem~\ref{theorem:core-func-ineq}. To extend Poincar\'e and MLSI inequalities beyond $\capOmega_M$, we need a mechanism to control excursions outside the cap. For this purpose, we employ a Foster-Lyapunov condition (see e.g.~\cite{MT09}):  let $(X_t)_{t \in \Z_{\geq 0}}$ be a discrete-time Markov chain with kernel $P$. A Foster-Lyapunov condition towards a set $C$ is the existence of a function $V \geq 1$ such that
\[
    \EE[V(X_{t+1})-V(X_t)\mid X_t=x]
    \leq-\lambda V(x)+b\,\one_C(x),
    \qquad 0<\lambda<1,\quad b<\infty.
\]
or, equivalently, $PV \leq (1-\lambda)V +b\, \one_C$. Writing $\tau_C := \inf \{ t \in \Z_{\geq 0} : X_t \in C\}$, this condition gives
\[
    \EE_x[(1-\lambda)^{-\tau_C}] \leq V(x), \qquad \PP_x(\tau_C > t) \leq V(x)(1-\lambda)^t, \qquad t \in \N
\]
see \cite[Proposition 4.1]{Bax05} and \cite[Theorem 15.2.5]{MT09}. Thus, a Foster-Lyapunov condition bounds the duration of excursions outside $C$, with $\lambda$ determining the decay rate. This drift condition can be combined with a Poincar\'e inequality
on $C$ to obtain a Poincar\'e inequality on the full state space;
see \cite{BBCG08,BCG08,CG14} for the continuous setting.
For a related discrete-time argument based on drift and
minorization, see \cite[Theorem~1]{TM22}. 

For Glauber dynamics, the unnormalized generator satisfies
\[
    \mathcal L:=n(P_{J,h}-I),
    \qquad
    -\EE_{\mu_{J,h}}[f\mathcal Lf]
    =n\mathcal E_{\mu_{J,h}}(f),
\]
where $\mathcal{L}f(\sigma) = \sum_{\tau \neq \sigma}\mathcal L(\sigma, \tau)(f(\tau)-f(\sigma))$. In this section, denote Hamming distance for $\sigma, \tau \in \Omega$ as
\[
d(\sigma,\tau) :=\bigl|\{i\in[n]:\sigma_i\neq\tau_i\}\bigr|
\]
and write $R= R_M$. For nonempty $B\subseteq\Omega$, write
$d(\sigma,B):=\min_{\tau\in B}d(\sigma,\tau)$.

We use the following Lyapunov inequality from Taghvaei and Mehta~\cite[equation~(7), proof of Theorem~1]{TM22}, whose continuous-time form goes back to Cattiaux, Guillin, Wang, and Wu~\cite[Lemma~2.12]{CGWW09}. We include a short proof for completeness.

\begin{lemma}[\cite{CGWW09,TM22}]\label{lemma:ground-state-inequality}
For every $W:\Omega \to(0,\infty)$ and $f:\Omega\to\R$,
\[
    \EE_{\mu_{J,h}}\left[-\frac{\mathcal LW}{W}f^2\right]
    \leq n\mathcal E_{\mu_{J,h}}(f).
\]
In particular, the result holds for the Glauber dynamics on $\capOmega_S$, with invariant measure $\mu_{J,h,S}$, generator $\mathcal{L} = n(P_{J,h,S} - I)$, and $W, f$ defined on $\capOmega_S$.  
\end{lemma}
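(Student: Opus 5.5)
The statement to prove is: for $W>0$ and $f$,
\[
\EE_\mu\Big[-\tfrac{\mathcal L W}{W} f^2\Big]\le n\mathcal E_\mu(f),
\]
where $\mu$ is reversible for $\mathcal L$. I would expand the left-hand side using jump rates, symmetrize via reversibility, and finish with an AM--GM inequality. Nothing specific to Ising enters beyond reversibility and nonnegative off-diagonal rates. The argument is therefore identical for $\mu_{J,h}$ on $\Omega$ and for $\mu_{J,h,S}$ on $\capOmega_S$ with $\mathcal L=n(P_{J,h,S}-I)$.

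\emph{Step 1 (expansion).} Write $k(\sigma,\tau):=\mu(\sigma)\mathcal L(\sigma,\tau)\ge0$ for $\tau\ne\sigma$. This is symmetric by detailed balance, and $n\mathcal E_\mu(f)=\frac12\sum_{\sigma\ne\tau}k(\sigma,\tau)(f(\sigma)-f(\tau))^2$. Then
\[
\EE_\mu\Big[-\tfrac{\mathcal LW}{W}f^2\Big]=\sum_{\sigma\ne\tau}k(\sigma,\tau)\big(W(\sigma)-W(\tau)\big)\frac{f(\sigma)^2}{W(\sigma)}.
\]

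\emph{Step 2 (symmetrization).} Swapping $\sigma$ and $\tau$ and averaging, using the symmetry of $k$, gives
\[
\frac12\sum_{\sigma\ne\tau}k(\sigma,\tau)\big(W(\sigma)-W(\tau)\big)\Big(\frac{f(\sigma)^2}{W(\sigma)}-\frac{f(\tau)^2}{W(\tau)}\Big).
\]

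\emph{Step 3 (pointwise bound).} It suffices to show, for $x,y>0$ and real $a,b$,
\[
(x-y)\Big(\frac{a^2}{x}-\frac{b^2}{y}\Big)\le (a-b)^2.
\]
Expanding the left-hand side gives $a^2+b^2-\frac{y}{x}a^2-\frac{x}{y}b^2$. By AM--GM, $\frac{y}{x}a^2+\frac{x}{y}b^2\ge 2|ab|\ge 2ab$, so the left-hand side is at most $a^2+b^2-2ab=(a-b)^2$.

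Summing this bound against $\frac12 k(\sigma,\tau)$ yields exactly $n\mathcal E_\mu(f)$. The only care needed is the bookkeeping of the factor $n$ and of the factor $\frac12$ in the Dirichlet form, together with the remark that restricting to $\capOmega_S$ only deletes edges (the rejected moves become holding). There is no real obstacle.
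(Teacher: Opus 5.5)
Your proof is correct and follows the paper's route: use reversibility to symmetrize, then prove a pairwise inequality edge by edge. The paper writes the slack as the exact square $W(\sigma)W(\tau)\bigl(f(\sigma)/W(\sigma)-f(\tau)/W(\tau)\bigr)^2$. This is precisely the quantity your AM--GM step discards, since $(a-b)^2-(x-y)(a^2/x-b^2/y)=xy(a/x-b/y)^2$.
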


\begin{proof}
Using reversibility,
\[
    \EE_{\mu_{J,h}}\left[-\frac{\mathcal LW}{W}f^2\right]
    =\frac12\sum_{\sigma\neq\tau}
        \mu_{J,h}(\sigma)\mathcal L(\sigma,\tau)
        \bigl(W(\sigma)-W(\tau)\bigr)
        \left(
            \frac{f(\sigma)^2}{W(\sigma)}
            -\frac{f(\tau)^2}{W(\tau)}
        \right).
\]
Now note that we have the following identity:
\[
    \bigl(f(\sigma)-f(\tau)\bigr)^2-(W(\sigma)-W(\tau))\left(
        \frac{f(\sigma)^2}{W(\sigma)}
        -\frac{f(\tau)^2}{W(\tau)}
    \right) =
    W(\sigma)W(\tau)
    \left(
        \frac{f(\sigma)}{W(\sigma)}
        -\frac{f(\tau)}{W(\tau)}
    \right)^2
\]
Therefore, 
\[
    \begin{aligned}
        n\mathcal E_{\mu_{J,h}}(f)
        -\EE_{\mu_{J,h}}\left[-\frac{\mathcal LW}{W}f^2\right]
        =
        \frac12\sum_{\sigma\neq\tau}
            \mu_{J,h}(\sigma)\mathcal L(\sigma,\tau)
            W(\sigma)W(\tau)
            \left(
                \frac{f(\sigma)}{W(\sigma)}
                -\frac{f(\tau)}{W(\tau)}
            \right)^2
        \geq0.
    \end{aligned}
\]
The same proof applies to the Glauber dynamics with $\mu_{J,h}$ replaced by $\mu_{J,h,S}$, $\mathcal L=n(P_{J,h,S}-I)$, and $W,f$ defined on $\capOmega_S$.
\end{proof}

\subsection{Poincar\'e lifting}

We first prove Theorem~\ref{theorem:lifting}-(a). We follow the centering argument in the proof
of \cite[Theorem~1]{TM22}, and use Theorem~\ref{theorem:core-func-ineq} for the estimate on the cap. As in \cite[Theorem~1]{TM22},  the main idea is to split an upper bound for the variance into its contributions on the cap $\capOmega_M$ and outside the cap $\widehat\Omega_S \setminus \capOmega_M$, and then estimate each contribution. Write $p:=\mu_{J,h,S}(\capOmega_M)$.
For every $f:\capOmega_S\to\R$, 
\begin{equation}\label{variance-bound-decomposition}
    \begin{aligned}
        \Var_{\mu_{J,h,S}}(f)
        &\leq
        \EE_{\mu_{J,h,S}}
        \left[(f-\EE_{\mu_{J,h,M}}[f])^2\right]\\
        &=p\Var_{\mu_{J,h,M}}(f)
        +\EE_{\mu_{J,h,S}}
        \left[
            (f-\EE_{\mu_{J,h,M}}[f])^2
            \one_{\capOmega_S\setminus\capOmega_M}
        \right].
    \end{aligned}
\end{equation}
The first term is controlled by Theorem \ref{theorem:core-func-ineq} (a). The following lemma controls the second term. 

\begin{lemma}\label{lemma:variance-tail-control}
Under the assumptions of Theorem~\ref{theorem:lifting}(a),
every $f:\capOmega_S\to\R$ satisfies
\begin{equation}\label{eq:lyapunov-tail}
    \EE_{\mu_{J,h,S}}
    \left[
        (f-\EE_{\mu_{J,h,M}}[f])^2
        \one_{\capOmega_S\setminus\capOmega_M}
    \right]
    \leq\frac n\zeta\mathcal E_{\mu_{J,h,S}}(f)
        +\frac p\zeta\Var_{\mu_{J,h,M}}(f).
\end{equation}
\end{lemma}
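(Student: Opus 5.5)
We plan to apply Lemma~\ref{lemma:ground-state-inequality} to the reflected Glauber dynamics on $\capOmega_S$, with a Lyapunov function $W$ built from the magnetization and the function $g:=f-\EE_{\mu_{J,h,M}}[f]$ in place of $f$. Since adding a constant changes neither side's Dirichlet form, the lemma gives
\[
    \EE_{\mu_{J,h,S}}\left[-\frac{\mathcal LW}{W}g^2\right]\le n\mathcal E_{\mu_{J,h,S}}(f).
\]
The target \eqref{eq:lyapunov-tail} will follow once $W$ is chosen so that $-\mathcal LW/W\ge \zeta$ on $\capOmega_S\setminus\capOmega_M$ and $-\mathcal LW/W\ge -1$ (or at least a bounded negative constant, which we then normalize) on $\capOmega_M$. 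Indeed, splitting the expectation gives
\[
    \zeta\,\EE_{\mu_{J,h,S}}[g^2\one_{\capOmega_S\setminus\capOmega_M}]
    \le n\mathcal E_{\mu_{J,h,S}}(f)+\EE_{\mu_{J,h,S}}[g^2\one_{\capOmega_M}]
    = n\mathcal E_{\mu_{J,h,S}}(f)+p\Var_{\mu_{J,h,M}}(f),
\]
because $g$ is centered under $\mu_{J,h,M}$. Dividing by $\zeta$ then gives exactly the claim.

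For $W$ we take the linear choice $W(\sigma):=1-m(\sigma)+\kappa$, i.e.\ a function of the number of minus spins. For the reflected chain, $\mathcal L m(\sigma)=\frac2n\cdot n\,(q_+(\sigma)-q_-(\sigma)\one_{m(\sigma)>S/n})$ with the normalization $\mathcal L=n(P-I)$; more precisely,
\[
    \mathcal Lm=2\bigl(q_+-q_-\one_{m>S/n}\bigr),
\]
since each flip changes $m$ by $2/n$. Hence $\mathcal LW=-\mathcal Lm$. Under \eqref{assumption:tail-stability}, for $\sigma\in\capOmega_S\setminus\capOmega_M$,
\[
    -\mathcal LW(\sigma)\ge \zeta(1-m(\sigma)),
    \qquad\text{so}\qquad
    -\frac{\mathcal LW}{W}\ge \zeta\,\frac{1-m}{1-m+\kappa}.
\]
Taking $\kappa\to0$ (or $\kappa=0$, which is legitimate since $1-m\ge 1-M/n>0$ off the cap and we only need $W>0$) gives the ratio $\ge\zeta$ there.

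On the cap, $W$ may vanish at $\1$, which is a problem. We would instead use $W=\max(1-m,\,c)$ or simply keep $\kappa=2/n$. On $\capOmega_M$ we need $-\mathcal LW/W\ge -1$, i.e.\ $\mathcal Lm\le W$ there. Here $\mathcal L m\le 2q_+\le 2(1-m)/2\cdot$\,(fraction of minus spins), since $q_+\le \#\{i:\sigma_i=-1\}/n=(1-m)/2$. Thus $\mathcal Lm\le 1-m\le W$, giving the ratio $\ge -1$ on the cap, as required. The same bound with $\kappa=0$ works even at $\1$ if we interpret $W(\1)=0$ via a limit $\kappa\downarrow0$. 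The main obstacle is exactly this degenerate point, together with checking that the boundary behavior at $m=S/n$ matches the indicator in \eqref{assumption:tail-stability}. We handle it by working with $\kappa>0$ throughout: off the cap, the ratio is $\ge\zeta(1-m)/(1-m+\kappa)\to\zeta$; on the cap, the ratio is $\ge -(1-m)/(1-m+\kappa)\ge -1$. Letting $\kappa\downarrow0$ by monotone/dominated convergence on the finite space then yields \eqref{eq:lyapunov-tail}.
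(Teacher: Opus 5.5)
Your overall scheme matches the paper's. You apply Lemma~\ref{lemma:ground-state-inequality} to $g=f-\EE_{\mu_{J,h,M}}[f]$, aim for $-\mathcal LW/W\ge\zeta$ off the cap and $\ge-1$ on it, and use $\EE_{\mu_{J,h,S}}[g^2\one_{\capOmega_M}]=p\Var_{\mu_{J,h,M}}(f)$. Your off-cap computation is also correct. The cap estimate, however, has a sign error. Since $\mathcal LW=-\mathcal Lm$, the requirement $-\mathcal LW/W\ge-1$ means $\mathcal Lm\ge -W$, i.e.\ $2(q_--q_+)\le W$. That is an upper bound on the \emph{downward} drift. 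Your bound $\mathcal Lm\le 2q_+\le 1-m$ goes the other way and only yields $-\mathcal LW/W\le 1$, which is useless here.

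With the correct requirement, the linear weight fails. At $\sigma=\1$ you have $W(\1)=\kappa$ and $\mathcal LW(\1)=2q_-(\1)=\frac2n\sum_i\phi(\ell_i(\1))>0$, since $\phi>0$. Hence $-\mathcal LW(\1)/W(\1)=-2q_-(\1)/\kappa\to-\infty$ as $\kappa\downarrow0$, and the limiting argument collapses. Even with $\kappa=2/n$ the ratio at $\1$ is $-\sum_i\phi(\ell_i(\1))$, which \eqref{assumption:stability} only bounds by $\delta_*R$, possibly of order $n$. Note also that your argument never uses \eqref{assumption:stability} or the sparse spectral width condition. Yet these are exactly what control $q_-$ inside the cap.

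The missing idea is to make the weight flat on the cap. You mention $W=\max(1-m,c)$ in passing but do not carry it out. The paper takes $V(\sigma)=\max\{R,d(\sigma,\1)\}$, which is your $\max\{1-m,1-M/n\}$ up to the factor $n/2$. Then:
\begin{itemize}
\item $\mathcal LV=0$ on $\Omega_k$ for $k>M$;
\item $\mathcal LV=nq_-$ on $\Omega_M$;
\item $\mathcal LV\le-\zeta V$ on $\capOmega_S\setminus\capOmega_M$ by \eqref{assumption:tail-stability}, since there $V=n(1-m)/2$, including the layer $\Omega_{M-2}$ where an upward move lowers $V$ by one.
\end{itemize}
On $\Omega_M$ one needs $nq_-\le R=V$. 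This is where \eqref{assumption:stability} enters, through Lemma~\ref{prop:boundary-stability}(a): $q_-\le\frac45 q_+\le R/n$. The result is $-\mathcal LV/V\ge\zeta\one_{\capOmega_S\setminus\capOmega_M}-\one_{\capOmega_M}$, with exactly the constants of the statement.

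Your linear weight can be rescued by keeping $\kappa$ of order $R/n$, e.g.\ $W=(2r+R/2)/n$ with $r=d(\sigma,\1)$. On the cap this uses $\sum_i\phi(\ell_i(\sigma))\le R/4+r/2$ from \eqref{eq:total-minus-rate}, again a consequence of \eqref{assumption:stability}. It costs a factor $5/4$ in the constants, since the off-cap ratio is then only $\ge\frac45\zeta$.
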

\begin{proof}
Define
\[
    \mathcal L:=n(P_{J,h,S}-I),
    \qquad
    V(\sigma):=\max\{R,d(\sigma,\1)\}.
\]
Assumption~\eqref{assumption:stability} and parity give $R\geq1$, so that $V > 0$. For $\sigma\in\Omega_k$, we have $\mathcal LV(\sigma)=0$
when $k>M$ and $\mathcal LV(\sigma)=nq_-(\sigma)$ when $k=M$.
For $S\leq k<M$, \eqref{assumption:tail-stability} gives
\[
    \begin{aligned}
        \mathcal LV(\sigma)
        =n\left(
            P_{J,h}(\sigma,\Omega_{k-2})\one_{\{k>S\}}
            -P_{J,h}(\sigma,\Omega_{k+2})
        \right)
        \leq-\zeta\frac{n-k}{2}
        =-\zeta V(\sigma).
    \end{aligned}
\]
On $\Omega_M$, Lemma~\ref{prop:boundary-stability}(a) gives
$q_-\leq(4/5)q_+\leq R/n$. Hence
\[
    -\frac{\mathcal LV}{V}
    \geq
    \zeta\one_{\capOmega_S\setminus\capOmega_M}
    -\one_{\capOmega_M}.
\]
Let $g:=f-\EE_{\mu_{J,h,M}}[f]$.
By Lemma~\ref{lemma:ground-state-inequality},
\[
    \begin{aligned}
        n\mathcal E_{\mu_{J,h,S}}(f)
        =n\mathcal E_{\mu_{J,h,S}}(g)
        \geq
        \EE_{\mu_{J,h,S}}
        \left[-\frac{\mathcal LV}{V}g^2\right]
        \geq
        \zeta\EE_{\mu_{J,h,S}}
        \left[g^2\one_{\capOmega_S\setminus\capOmega_M}\right]
        -p\Var_{\mu_{J,h,M}}(f).
    \end{aligned}
\]
Rearranging proves the lemma. 
\end{proof}

\begin{proof}[Proof of Theorem~\ref{theorem:lifting}-(a)]
Combining \eqref{variance-bound-decomposition},
Lemma~\ref{lemma:variance-tail-control}, and
Theorem~\ref{theorem:core-func-ineq}(a), we obtain
\[
    \begin{aligned}
        \Var_{\mu_{J,h,S}}(f)
        \leq \frac n\zeta\mathcal E_{\mu_{J,h,S}}(f)
            +(1+\zeta^{-1})p\Var_{\mu_{J,h,M}}(f)
        \leq \frac{Cn}{\zeta}
            \exp\!\left(\frac{C\|J\|_{\op}}{\eps}\right)
            \mathcal E_{\mu_{J,h,S}}(f),
    \end{aligned}
\]
where we used $p\mathcal E_{\mu_{J,h,M}}(f)
\leq\mathcal E_{\mu_{J,h,S}}(f)$. This proves part (a) of the theorem. 
\end{proof}

\subsection{Entropy lifting}

We now turn to Theorem \ref{theorem:lifting}-(b) and lift the MLSI from $\widehat\Omega_M$ to $\widehat\Omega_S$. For $f \geq 0$, set $a : = (\E_{\mu_{J,h,M}} f)^{1/2}$. The choice $a^2=\EE_{\mu_{J,h,M}}[f]$ preserves the mean
when $f$ is replaced by $a^2$ on the cap. We again separate the contribution inside $\capOmega_M$ from the contribution on $\capOmega_S \setminus \capOmega_M$ and obtain
\begin{equation}\label{entropy-lifting-decomposition}
 \Ent_{\mu_{J,h,S}}(f) \lesssim_{\eta} p \Ent_{\mu_{J,h,M}}(f) + n \,\EE_{\mu_{J,h,S}}[(\sqrt{f} - a)^2 \one_{\capOmega_{S} \setminus \capOmega_M}]
\end{equation}
The above inequality is derived in the proof of Theorem \ref{theorem:lifting}-(b) using the entropy chain rule and Rothaus' inequality, see below. As in the Poincar\'e case, the first term is controlled by Theorem~\ref{theorem:core-func-ineq}-(b), whereas the second term is controlled using a Lyapunov drift argument. Note, however, that the second term has an additional factor of $n$, which requires an exponential Lyapunov function. The following lemma, which is the entropic equivalent of Lemma \ref{lemma:variance-tail-control}, controls the exterior quadratic form in (\ref{entropy-lifting-decomposition}) and is proved below.

\begin{lemma}\label{lemma:basin-entropy}
Suppose that the assumptions of Theorem~\ref{theorem:lifting}-(a)
hold and that $M\leq(1-\vartheta)n$ for some $\vartheta\in(0,1)$.
Then every $f:\capOmega_S\to[0,\infty)$ satisfies
\begin{equation}\label{eq:entropy-transfer}
    n\EE_{\mu_{J,h,S}}\left[
        (\sqrt f-a)^2
        \one_{\capOmega_S\setminus\capOmega_M}
    \right]
    \lesssim_{\zeta,\vartheta}
        n\mathcal E_{\mu_{J,h,S}}(\sqrt f)
        +p\Ent_{\mu_{J,h,M}}(f),
    \quad
    a:=(\EE_{\mu_{J,h,M}}[f])^{1/2}.
\end{equation}
\end{lemma}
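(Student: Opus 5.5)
We would follow the proof of Lemma~\ref{lemma:variance-tail-control}, applying Lemma~\ref{lemma:ground-state-inequality} to $g:=\sqrt f-a$ on $\capOmega_S$. The difference is that the Lyapunov function must be \emph{exponential} in the distance to the cap. This is what lets us absorb the extra factor $n$ in front of the exterior quadratic form. Concretely, set $\mathcal L:=n(P_{J,h,S}-I)$ and take
\[
    W(\sigma):=e^{\kappa\,(d(\sigma,\1)-R)_+},
\]
with a small constant $\kappa=\kappa(\zeta)>0$ to be chosen. On $\capOmega_S\setminus\capOmega_M$, write $\sigma\in\Omega_k$ with $k<M$, so that $d(\sigma,\1)=(n-k)/2=:r>R$. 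A single flip changes $r$ by $\pm1$. Hence
\[
    -\frac{\mathcal LW}{W}(\sigma)=n\Big[q_+(\sigma)(1-e^{-\kappa})-q_-(\sigma)\one_{k>S}(e^{\kappa}-1)\Big].
\]
Using \eqref{assumption:tail-stability}, i.e.\ $q_+-q_-\one_{k>S}\ge \zeta r/n$, together with the trivial bound $q_-\le 1$, we get
\[
    -\frac{\mathcal LW}{W}\ \ge\ n\Big[(1-e^{-\kappa})\zeta\frac rn-(e^\kappa-1-(1-e^{-\kappa}))\Big]\ \gtrsim\ \kappa\zeta r-O(\kappa^2 n).
\]
Since $r\ge R\ge \vartheta n/2$ (this is where $M\le(1-\vartheta)n$ enters), choosing $\kappa\asymp \zeta\vartheta$ small gives
\[
    -\frac{\mathcal LW}{W}\ \ge\ c_{\zeta,\vartheta}\,n\quad\text{on } \capOmega_S\setminus\capOmega_M .
\]
This linear-in-$n$ rate is the reason for the exponential choice; a linear $V$ only gives a rate of order $\zeta$. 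On $\capOmega_M$, $W$ is constant except on the slice $\Omega_M$. There only downward flips change $W$, so
\[
    -\frac{\mathcal LW}{W}\ \ge\ -n q_-(e^\kappa-1)\ \gtrsim\ -R\ \ge\ -n\quad\text{on }\Omega_M,
\]
using $q_-\le (4/5)q_+\le R/n$ from Lemma~\ref{prop:boundary-stability}(a).

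Plugging these bounds into Lemma~\ref{lemma:ground-state-inequality} with $g=\sqrt f-a$ (so that $\mathcal E(g)=\mathcal E(\sqrt f)$) yields
\[
    c_{\zeta,\vartheta}\,n\,\EE_{\mu_{J,h,S}}\big[g^2\one_{\capOmega_S\setminus\capOmega_M}\big]
    \le n\mathcal E_{\mu_{J,h,S}}(\sqrt f)+C\,n\,p\,\EE_{\mu_{J,h,M}}\big[(\sqrt f-a)^2\one_{\Omega_M}\big].
\]
The remaining task is to bound the last term by $p\Ent_{\mu_{J,h,M}}(f)$ plus a multiple of $n\mathcal E_{\mu_{J,h,S}}(\sqrt f)$. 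This is the main obstacle, because a crude bound $n\,\EE[(\sqrt f-a)^2]$ would lose the factor $n$.

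We would handle it with the boundary transport argument. The quantity $n\,\EE_{\mu}[(\sqrt f-a)^2\one_{\Omega_M}]$ is comparable to $\widetilde{\mathcal Q}_M(\sqrt f;a)\cdot n/R$-type weights, since $q_+\ge R/(4n)$ on $\Omega_M$. Running Lemma~\ref{lemma:boundary-transport} (drift $q_-\le\frac45 q_+$ in the outer layers) in the same way as in the proof of Proposition~\ref{prop:defect-absorption} bounds it by
\[
    \lesssim\ n\,\mathcal E_{\mu_{J,h,M}}(\sqrt f)+R\rho^{L}\,\EE_{\mu_{J,h,M}}\big[(\sqrt f-a)^2\big].
\]
Then $R\rho^L\lesssim 1$. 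The choice $a^2=\EE_{\mu_{J,h,M}}f$ gives
\[
    \EE_{\mu_{J,h,M}}\big[(\sqrt f-a)^2\big]\le 2\Var_{\mu_{J,h,M}}(\sqrt f)\le 2\Ent_{\mu_{J,h,M}}(f),
\]
where the first inequality holds because $a\ge\EE\sqrt f$ and $(a-\EE\sqrt f)^2\le \Var(\sqrt f)$. Finally, we use $p\,\mathcal E_{\mu_{J,h,M}}(\sqrt f)\le\mathcal E_{\mu_{J,h,S}}(\sqrt f)$. If the prefactor of the slice term were too large for this bound, we would instead decrease $W$'s jump at $\Omega_M$ by taking a smaller $\kappa$. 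Its contribution is $O(\kappa R)$, while the exterior gain is $\kappa\zeta r$, so both scale identically in $\kappa$ and the constants only depend on $\zeta,\vartheta$.
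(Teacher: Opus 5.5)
Your proposal is correct and is essentially the paper's proof. The paper uses the same Lyapunov function $V=\exp(\frac{\zeta}{2}d(\sigma,\widehat\Omega_M))$ (your $W$ with $\kappa=\zeta/2$), the ground-state inequality applied to $\sqrt f-a$, Lemma~\ref{lemma:boundary-transport} for the slice term together with $R\rho^{L}\lesssim 1$, and the bound $\E_{\mu_{J,h,M}}[(\sqrt f-a)^2]\le 2\Var(\sqrt f)\le 2\Ent(f)$. The only differences are minor. The paper bounds the exterior drift using $nq_+\le r$ rather than $q_-\le 1$, which gives $-\mathcal LV/V\ge\frac{\zeta^2}{4}r$ with $\kappa=\zeta/2$, so $\kappa$ need not be tuned to $\vartheta$. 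It also keeps the $q_-$ weight on $\Omega_M$, so the slice term is exactly $n(e^{\zeta/2}-1)p\,\mathcal Q_M(\sqrt f;a)$. That avoids your detour through $q_+\ge R/(4n)$, which via reversibility lands on $\widetilde{\mathcal Q}_{M+2}$ rather than $\widetilde{\mathcal Q}_M$ as you wrote, at a harmless cost $n/R\le 2/\vartheta$.
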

\begin{proof}
Define
\[
    \mathcal L:=n(P_{J,h,S}-I) \quad \text{and} \quad V(\sigma)=\exp\left(\frac{\zeta}{2} \, d(\sigma,\widehat\Omega_M)\right)
\]
Take $\sigma\in\Omega_k$. If $k>M$, then $\mathcal LV(\sigma)=0$.
If $k=M$, then only an outward move changes $V$, and $S\leq M-2$, so
\[
    \mathcal LV(\sigma)
    =n(e^{\zeta/2}-1)P_{J,h,S}(\sigma,\Omega_{M-2})
    =n(e^{\zeta/2}-1)q_-(\sigma).
\]
Finally, when $S\leq k<M$, put $r:=(n-k)/2$. Then
\[
    \mathcal LV(\sigma)
    =nV(\sigma)\left[
        (e^{-\zeta/2}-1)P_{J,h,S}(\sigma,\Omega_{k+2})
        +(e^{\zeta/2}-1)P_{J,h,S}(\sigma,\Omega_{k-2})
    \right].
\]
Using \eqref{assumption:tail-stability} and
$nP_{J,h,S}(\sigma,\Omega_{k+2})\leq r$,
\[
\begin{aligned}
    -\frac{\mathcal LV}{V}(\sigma) \geq(e^{\zeta/2}-1)\left(\zeta r-\frac{\zeta}{2}r\right)
    \geq\frac{\zeta^2}{4}r,
\end{aligned}
\]
where we used $1-e^{-x}\leq x$ and $e^x-1\geq x$ for $x\geq0$. This also holds on the reflecting boundary $k=S$, where the outward move probability is zero. Therefore,
\begin{equation}\label{ineq:entropy-drift-bound-lemma}
    -\frac{\mathcal LV}{V}
    \geq\frac{\zeta^2R}{4}
        \one_{\widehat\Omega_S\setminus\widehat\Omega_M}
    -n(e^{\zeta/2}-1)q_-\one_{\Omega_M}.
\end{equation}
Let $f : \capOmega_S \rightarrow [0,\infty)$ and set $a:=(\E_{\mu_{J,h,M}}[f])^{1/2}$. Applying Lemma~\ref{lemma:ground-state-inequality}, we have
\[
    n\mathcal E_{\mu_{J,h,S}}(\sqrt f)
    =n\mathcal E_{\mu_{J,h,S}}(\sqrt f-a)
    \geq
    \EE_{\mu_{J,h,S}}\left[
        -\frac{\mathcal LV}{V}(\sqrt f-a)^2
    \right]
\]
Using drift bound (\ref{ineq:entropy-drift-bound-lemma}) and rearranging, 
\begin{equation}\label{ineq:entropy-drift-lemma-final}
      R\,\E_{\mu_{J,h,S}}\left[
        (\sqrt f-a)^2
        \one_{\widehat\Omega_S\setminus\widehat\Omega_M}
    \right] \lesssim_{\zeta}   n\mathcal E_{\mu_{J,h,S}}(\sqrt f) +
     n\E_{\mu_{J,h,S}}\left[
            q_-(\sqrt f-a)^2\one_{\Omega_M}
        \right].
\end{equation}
It remains to control the last term. From the definitions,
\[
        \E_{\mu_{J,h,S}}\left[
        q_-(\sqrt f-a)^2\one_{\Omega_M}
    \right]
    =p\,
        \mathcal Q_M(\sqrt f;a).
\]
Let $r_0:=\lfloor2R/3\rfloor$. By Lemma~\ref{prop:boundary-stability}-(a),
$q_-\leq R/(3n)$ on $\Omega_{n-2r_0}$ and $q_-/q_+\leq4/5$ on
$\Omega_k$ for $M\leq k<n-2r_0$. Thus, applying
Lemma~\ref{lemma:boundary-transport}, we obtain, for some
universal $\rho\in(0,1)$,
\[
\begin{aligned}
\mathcal Q_M(\sqrt f;a)
&\lesssim \mathcal E_{\mu_{J,h,M}}(\sqrt f)
+\rho^{R-r_0}\widetilde{\mathcal Q}_{n-2r_0}(\sqrt f;a)\\
&\lesssim \mathcal E_{\mu_{J,h,M}}(\sqrt f)
+\frac Rn\rho^{R-r_0}
\E_{\mu_{J,h,M}}[(\sqrt f-a)^2].
\end{aligned}
\]
Since $R-r_0\geq R/3$ and $\rho$ is universal,
$R\rho^{R-r_0}\lesssim1$. Moreover, since
$a^2=\E_{\mu_{J,h,M}}[f]$, Jensen's inequality gives
$\E_{\mu_{J,h,M}}[\sqrt f]\leq a$, and hence
\[
\begin{aligned}
\E_{\mu_{J,h,M}}[(\sqrt f-a)^2]
\leq2\left(a^2-
\bigl(\E_{\mu_{J,h,M}}[\sqrt f]\bigr)^2\right)=2\Var_{\mu_{J,h,M}}(\sqrt f)
\leq2\Ent_{\mu_{J,h,M}}(f).
\end{aligned}
\]
Combining these estimates, we obtain
\[
\mathcal Q_M(\sqrt f;a)
\lesssim \mathcal E_{\mu_{J,h,M}}(\sqrt f)
+\frac1n\Ent_{\mu_{J,h,M}}(f).
\]
Since $p\, \mathcal{E}_{\mu_{J,h,M}}(\sqrt{f}) \leq \mathcal{E}_{\mu_{J,h,S}}(\sqrt{f})$, it follows that
\[
   \E_{\mu_{J,h,S}}\left[
        q_-(\sqrt f-a)^2\one_{\Omega_M}
    \right] =  p \mathcal{Q}_M(\sqrt{f}; a)
    \lesssim
        \mathcal E_{\mu_{J,h,S}}(\sqrt f)
        +\frac{p}n
            \Ent_{\mu_{J,h,M}}(f).
\]
Substituting this into \eqref{ineq:entropy-drift-lemma-final} and using $R=(n-M)/2\ge\vartheta n/2$, hence $n/R\le2/\vartheta$, we obtain
\[
        n\,\E_{\mu_{J,h,S}}\left[
        (\sqrt f-a)^2
        \one_{\widehat\Omega_S\setminus\widehat\Omega_M}
    \right]
     \lesssim_{\zeta,\vartheta}
        n\mathcal E_{\mu_{J,h,S}}(\sqrt f) +p\Ent_{\mu_{J,h,M}}(f),
\]
which completes the proof.
\end{proof}

\begin{proof}[Proof of Theorem~\ref{theorem:lifting}-(b)]
Let $f:\widehat\Omega_S\to[0,\infty)$ and $a := (\E_{\mu_{J,h,M}} f)^{1/2}$. We first derive the decomposition in (\ref{entropy-lifting-decomposition}), for which we will use entropy chain rule. Let $\tilde{f} = a^2 \, \one_{\capOmega_M} + f \, \one_{\capOmega_S \setminus \capOmega_M}$. Using entropy chain rule, 
\begin{equation}\label{eq:entropy-chain-rule}
    \Ent_{\mu_{J,h,S}}(f) = p \, \Ent_{\mu_{J,h,M}}(f) + \Ent_{\mu_{J,h,S}}(\tilde{f})
\end{equation}
We bound the second term. By Rothaus' lemma (see e.g.~\cite[Lemma 5.1.4]{bakry2014analysis}),
\begin{equation}\label{ineq:rothhaus-entropy}
    \Ent_{\mu_{J,h,S}}(\tilde{f}) \leq \Ent_{\mu_{J,h,S}}[(\sqrt{f} - a)^2 \; \one_{\capOmega_S \setminus \capOmega_M}] + 2 \, \E_{\mu_{J,h,S}}[(\sqrt{f} - a)^2 \, \one_{\capOmega_S \setminus \capOmega_M}]
\end{equation}
If $\E_{\mu_{J,h,S}}[(\sqrt{f} - a)^2 \, \one_{\capOmega_S \setminus \capOmega_M}] = 0$, then $\widetilde{f } = a^2$, so $\Ent_{\mu_{J,h,S}}(\tilde{f}) = 0$. Otherwise, note that for all $\sigma \in \capOmega_S \setminus \capOmega_M$,
\[
    \frac{(\sqrt{f(\sigma)} -a)^2}{\E_{\mu_{J,h,S}}[(\sqrt{f} - a)^2 \, \one_{\capOmega_S \setminus \capOmega_M}]} \leq \frac{1}{\mu_{J,h,S}(\sigma)} \leq e^{\eta n}
\]
where the second inequality comes from \eqref{assumption:H4}. Thus, 
\begin{align*}
        \Ent_{\mu_{J,h,S}}[(\sqrt{f} - a)^2 \; \one_{\capOmega_S \setminus \capOmega_M}] &\leq \sum_{\sigma \in \capOmega_S \setminus \capOmega_M} \mu_{J,h,S}(\sigma)[\sqrt{f(\sigma)} - a]^2 \, \log \frac{1}{\mu_{J,h,S}(\sigma)} \\ & \leq \eta n \, \E_{\mu_{J,h,S}}[(\sqrt{f}-a)^2 \, \one_{\capOmega_S \setminus \capOmega_M}]
\end{align*}
Therefore,  the inequality in (\ref{ineq:rothhaus-entropy}) becomes
\[
    \Ent_{\mu_{J,h,S}}(\tilde{f}) \leq (\eta n+2) \, \E_{\mu_{J,h,S}}[(\sqrt{f} -a)^2 \, \one_{\capOmega_S \setminus \capOmega_M}]
\]
Using this bound in \eqref{eq:entropy-chain-rule} proves
\eqref{entropy-lifting-decomposition}. Increasing $C_1$ if necessary, set
\[
    C_{\mathrm E}:=C_1n\exp\!\left(
        \frac{C_2\|J\|_{\op}}{\eps}\right)\geq n.
\]
From the proof of Theorem~\ref{theorem:core-func-ineq}-(b), we obtain
$\Ent_{\mu_{J,h,M}}(f)\leq
 C_{\mathrm E}\mathcal H_{\mu_{J,h,M}}(f)$.
Keeping only the conditional entropy contributions from pairs contained in $\widehat\Omega_M$, and
$\Var(\sqrt{f})\leq\Ent(f)$ give
\[
    p\mathcal H_{\mu_{J,h,M}}(f)
       \leq\mathcal H_{\mu_{J,h,S}}(f),
    \qquad
    \mathcal E_{\mu_{J,h,S}}(\sqrt{f})
       \leq\mathcal H_{\mu_{J,h,S}}(f).
\]
Since \eqref{assumption:stability-entropy} implies \eqref{assumption:stability},
Lemma~\ref{lemma:basin-entropy} now yields
\begin{align*}
    \Ent_{\mu_{J,h,S}}(f)
    \leq C_{\zeta,\vartheta,\eta}\left(
        n\mathcal E_{\mu_{J,h,S}}(\sqrt{f})
        +p\Ent_{\mu_{J,h,M}}(f)\right)
    \leq C_{\zeta,\vartheta,\eta}(n+C_{\mathrm E})
        \mathcal H_{\mu_{J,h,S}}(f).
\end{align*}
Since
$\mathcal H_{\mu_{J,h,S}}(f)
\leq\mathcal E_{\mu_{J,h,S}}(f,\log f)$,
this proves the claimed MLSI bound.
\end{proof}

\section{Application estimates and proofs}\label{sec:applications}

In this section, we prove Proposition~\ref{prop:z2-inputs}, Theorem~\ref{theorem:rrg-ordering}, and Corollaries~\ref{cor:high-field-func-ineq}, \ref{corollary:SK-high-field-mlsi}. For Gaussian applications, let
$Z\sim\mathcal N(0,1)$ and let $\Phi$ denote its distribution
function. For $W\sim\mathrm{GOE}(n)$, write
\[
    \widetilde{W}:=W-\diag(W),
    \qquad
    g_i:=\sum_{j\neq i}W_{ij}=(\widetilde{W}\1)_i.
\]

\subsection{$\Z_2$ synchronization}\label{subsec:z2-estimates}

\begin{proof}[Proof of Proposition~\ref{prop:z2-inputs}]
Recall from Section~\ref{subsec:z2-proof}, we defined $J_0$ as
\[
    J_0=\frac{\lambda^2}{2n}\1\1^\top
        +\frac{\lambda}{2}\operatorname{diag}(\theta)
        W\operatorname{diag}(\theta).
\]
Conditional on $\theta$, the matrix $\diag(\theta)W\diag(\theta)$ has the GOE law,
so it suffices to take $\theta=\1$. Let $m_0,R,M$ be as in the statement. Choose $C'>0$ and then
$\lambda_0\geq 2C'$ sufficiently large as below, and fix
$\lambda\geq\lambda_0$ throughout. The standard GOE norm bound (see e.g. \cite[Proposition~2.2]{HS25})
gives a universal $K>0$ such that, for every fixed $\rho\in(0,1/2)$, with probability $1-o(1)$,
\begin{equation}\label{eq:z2-gaussian-width}
    \max\{\|W\|_{\op},\|\widetilde W\|_{\op}\}\leq K,
    \qquad
    \max_{\substack{T\subseteq[n]\\ |T|\leq\rho n}}
\max\{\|W_{T,T}\|_{\mathrm{op}},
       \|\widetilde W_{T,T}\|_{\mathrm{op}}\}
\leq K\sqrt{\rho\log\frac{e}{\rho}}.
\end{equation}
Using $\rho=2/\lambda^3$ and  $2R\leq\rho n$,
we obtain
\[
    8\omega_M(J_0)
    \lesssim\frac{\lambda^2R}{n}
        +\lambda\sqrt{\rho\log\frac e\rho}
    \lesssim\frac1\lambda
        +\sqrt{\frac{\log\lambda}{\lambda}}
    \leq\frac12.
\]
The row sums have the representation
\[
    (g_i)_{i=1}^n
    \stackrel{d}{=}
    \left(\sqrt{\frac{n-2}{n}}\,Z_i+\frac{Z_0}{\sqrt n}
    \right)_{i=1}^n,
\]
where $Z_i \iid \mathcal{N}(0,1)$.The law of large numbers and continuity of $\Phi$ give,
for every fixed $u\in\R$,
\begin{equation}\label{eq:z2-gaussian-empirical}
    \frac1n\bigl|\{i:g_i<u\}\bigr|
    \xrightarrow{\P}\Phi(u).
\end{equation}
Similarly, since $|\psi'|\leq3$, the same representation
and the law of large numbers give, for every fixed
$h_0,\beta\geq0$,
\begin{equation}\label{eq:z2-gaussian-entropy-general}
    \frac1n\sum_{i=1}^n\psi(h_0+\beta g_i)
    \xrightarrow{\P}\EE[\psi(h_0+\beta Z)].
\end{equation}
At the all-plus configuration,
$\ell_i(\1)=\lambda^2(1-1/n)+\lambda g_i$.
Since
$\psi(u)\leq(2/e)e^{-u}$,
\eqref{eq:z2-gaussian-entropy-general} and $|\psi'|\leq3$
give
\[
    \frac1n\sum_{i=1}^n\psi(\ell_i(\1))
    =\EE[\psi(\lambda^2+\lambda Z)]+o_{\P}(1)
    \leq\frac2e e^{-\lambda^2/2}+o_{\P}(1).
\]
Increasing $\lambda_0$, we therefore have, with probability
$1-o(1)$,
\[
    \sum_{i=1}^n\psi(\ell_i(\1))
    \leq\frac{\delta_*n}{2\lambda^3}
    \leq\delta_*R,
\]
where $R\geq n/(2\lambda^3)$ for all sufficiently large $n$.
This verifies \eqref{assumption:stability-entropy}.

To prove \eqref{eq:z2-input-drift}, fix $\sigma$ with
$m_0\leq m(\sigma)<M/n$, and put $u:=m(\sigma)$,
$T:=\{i:\sigma_i=-1\}$, and $r:=|T|>R$.
Since $\sigma=\1-2\one_T$,
\[
    \ell_i(\sigma)
    =\lambda^2\left(u-\frac{\sigma_i}{n}\right)
        +\lambda g_i-2\lambda(\widetilde W\one_T)_i.
\]
Applying \eqref{eq:z2-gaussian-empirical} at
$-C'/8$ and $-\lambda/8$, we obtain, with probability
$1-o(1)$, uniformly over these configurations,
\[
    \left|\left\{i:g_i<-\frac{\lambda u}{4}\right\}\right|
    \leq
    \begin{cases}
        2n\Phi(-C'/8)\leq r/8, &u\leq1/2,\\[2mm]
        2n\Phi(-\lambda/8)\leq r/8, &u>1/2.
    \end{cases}
\]
Indeed, $r\geq n/4$ in the first case, whereas
$r>R\geq n/(2\lambda^3)$ in the second.
The bounds follow by taking $C'$ and then $\lambda_0$
sufficiently large, since $\Phi(-\lambda/8)=o(\lambda^{-3})$.
Also, \eqref{eq:z2-gaussian-width} gives
\[
    \left|\left\{i:
        |(\widetilde W\one_T)_i|>\frac{\lambda u}{8}
    \right\}\right|
    \leq\frac{64\|\widetilde W\one_T\|_2^2}{\lambda^2u^2}
    \leq\frac{256K^2}{(C')^2}r
    \leq\frac r8
\]
for sufficiently large $C'$.
Thus, at all but at most $r/4$ coordinates,
\[
    \ell_i(\sigma)
    \geq\lambda^2\left(u-\frac1n\right)
        -\frac{\lambda^2u}{2}
    \geq\frac{\lambda^2u}{4}
\]
for all sufficiently large $n$. Consequently,
\[
    \sum_{i=1}^n\phi(\ell_i(\sigma))
    \leq\frac r4+ne^{-\lambda^2u/2}
    \leq\frac r2.
\]
For the last inequality, use $u\geq C'/(2\lambda)$
and $r\geq n/4$ when $u\leq1/2$, and
$e^{-\lambda^2/4}=o(\lambda^{-3})$ and
$r>R\geq n/(2\lambda^3)$ otherwise, increasing
$\lambda_0$ if necessary.
Therefore,
\[
    n\bigl((Pm)(\sigma)-m(\sigma)\bigr)
    =\frac2n\left(r-\sum_{i=1}^n\phi(\ell_i(\sigma))\right)
    \geq\frac rn
    =\frac{1-m(\sigma)}2,
\]
uniformly over the stated configurations.
\end{proof}

\subsection{Ferromagnetic Ising model on a random regular graph}
We prove Theorem~\ref{theorem:rrg-ordering}. For a graph $G$ on
$[n]$, let $A_G$ denote its adjacency matrix, with
$(A_G)_{ij}=\one_{\{i\sim j\}}$. The following lemma gives the
spectral and edge expansion estimates used below.

\begin{lemma}\label{lemma:rrg-spectral-geometry}
There is a universal constant $K>0$ such that, for every fixed
integer $d\geq3$ and $G\sim\mathcal G_d(n)$, the following hold
simultaneously with probability $1-o(1)$ as $n\to\infty$ through
integers for which $nd$ is even. For every $1\leq s\leq n$,
\begin{equation}\label{ineq:rrg-sparse-width}
    \omega_{n-s}(A_G)\leq\frac{ds}{n}+2K\sqrt d.
\end{equation}
Moreover, every $S\subseteq[n]$, with $k:=|S|$, satisfies
\begin{equation}\label{ineq:rrg-expansion}
    |\partial S|
    \geq(d-K\sqrt d)k\left(1-\frac kn\right),
\end{equation}
where $\partial S$ is the set of edges joining $S$ to
$[n]\setminus S$.
\end{lemma}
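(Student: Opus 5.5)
The plan is to split $A_G=\frac dn\1\1^\top+(A_G-\frac dn\1\1^\top)$ and to use a spectral gap estimate for random regular graphs. By Friedman's theorem, or the simpler Broder--Shamir or Kahn--Szemer\'edi bounds, which give a universal constant, with probability $1-o(1)$ every nontrivial eigenvalue of $A_G$ has absolute value at most $K\sqrt d$. Equivalently,
\[
\Big\|A_G-\frac dn\1\1^\top\Big\|_{\op}\le K\sqrt d .
\]
Both claims will follow deterministically from this single event.

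For \eqref{ineq:rrg-sparse-width}, fix $T$ with $|T|\le s$ and write
\[
(A_G)_{T,T}=\frac dn\1_T\1_T^\top+E_{T,T}, \qquad E:=A_G-\frac dn\1\1^\top .
\]
The rank-one part is positive semidefinite with norm $d|T|/n\le ds/n$, so its spectral width is at most $ds/n$. The principal submatrix $E_{T,T}$ has operator norm at most $\|E\|_{\op}\le K\sqrt d$, hence spectral width at most $2K\sqrt d$. Weyl's inequalities give $\lambda_{\max}(X+Y)\le\lambda_{\max}(X)+\lambda_{\max}(Y)$ and $\lambda_{\min}(X+Y)\ge\lambda_{\min}(X)+\lambda_{\min}(Y)$, so spectral width is subadditive. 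Taking the maximum over $T$ with $|T|\le n-(n-s)=s$ yields the claim.

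For \eqref{ineq:rrg-expansion}, use
\[
|\partial S|=\one_S^\top(dI-A_G)\one_S=dk-\one_S^\top A_G\one_S .
\]
Decompose $\one_S=\frac kn\1+v$ with $v\perp\1$ and $\|v\|_2^2=k(1-k/n)$. Since $A_G\1=d\1$ and $A_G$ preserves $\1^\perp$,
\[
\one_S^\top A_G\one_S=\frac{dk^2}{n}+v^\top A_Gv\le\frac{dk^2}{n}+K\sqrt d\,k\Big(1-\frac kn\Big).
\]
Therefore $|\partial S|\ge dk(1-k/n)-K\sqrt d\,k(1-k/n)$, which is the claim.

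The main obstacle is only importing the right spectral gap theorem with a universal constant valid uniformly for every fixed $d\ge3$ (Friedman gives $2\sqrt{d-1}+o(1)$, so $K=3$ works). The two derivations above are routine.
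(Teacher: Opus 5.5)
Your proposal is correct and is essentially the paper's proof: both bounds follow deterministically from Friedman's estimate on $\|A_G-\frac dn\1\1^\top\|_{\op}$, via the rank-one-plus-perturbation bound on principal submatrices and the identity $|\partial S|=x^\top(dI_n-A_G)x$ with $x=\one_S-\frac kn\1$. Your $K=3$ suffices for the lemma as stated, although the paper keeps the sharper bound $\sqrt d+\sqrt{d-1}$ (obtained from $2\sqrt{d-1}+\epsilon$) because its proof of Theorem~\ref{theorem:rrg-ordering} needs that exact constant at $\beta=1/(16\sqrt d)$. One correction to your aside: Broder--Shamir only gives $O(d^{3/4})$, which is not of order $\sqrt d$, so you should cite Friedman (or an $O(\sqrt d)$ bound such as Kahn--Szemer\'edi's, transferred to the uniform simple model).
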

\begin{proof}
By Friedman's estimate~\cite[Corollary~1.4]{Friedman}, with
probability $1-o(1)$,
\begin{equation}\label{eq:rrg-spectral-bound}
    \left\|A_G-\frac dn\1\1^\top\right\|_{\op}
    \leq\sqrt d+\sqrt{d-1}<2\sqrt d.
\end{equation}
We show that
\eqref{eq:rrg-spectral-bound} implies both conclusions with $K=2$. For a nonempty $T\subseteq[n]$ and $x\in\R^n$ supported on $T$,
\[
    x^\top A_Gx
    =\frac dn\langle x,\1\rangle^2
       +x^\top\left(A_G-\frac dn\1\1^\top\right)x,
    \qquad
    \langle x,\1\rangle^2\leq|T|\|x\|_2^2.
\]
Consequently,
\[
    \lambda_{\min}((A_G)_{T,T})
    \geq-\sqrt d-\sqrt{d-1},
    \qquad
    \lambda_{\max}((A_G)_{T,T})
    \leq\frac{d|T|}{n}+\sqrt d+\sqrt{d-1}.
\]
Subtracting and maximizing over $1\leq|T|\leq s$ gives
\[
    \omega_{n-s}(A_G)
    \leq\frac{ds}{n}+2\bigl(\sqrt d+\sqrt{d-1}\bigr)
    \leq\frac{ds}{n}+2K\sqrt d,
\]
which proves \eqref{ineq:rrg-sparse-width}. For \eqref{ineq:rrg-expansion}, fix $S\subseteq[n]$, put $k:=|S|$,
and set $x:=\one_S-(k/n)\1$. Then
\[
    x\perp\1,
    \qquad
    \|x\|_2^2=k\left(1-\frac kn\right),
    \qquad
    (dI_n-A_G)\1=0.
\]
Thus, by \eqref{eq:rrg-spectral-bound},
\[
    \begin{aligned}
        |\partial S|
        =\one_S^\top(dI_n-A_G)\one_S
          =x^\top(dI_n-A_G)x
        \geq(d-K\sqrt d)\|x\|_2^2
          =(d-K\sqrt d)k\left(1-\frac kn\right),
    \end{aligned}
\]
which proves \eqref{ineq:rrg-expansion}.
\end{proof}

\begin{lemma}\label{lemma:local-field-magnetization-drift}
Let $d,\beta$ be as in Theorem~\ref{theorem:rrg-ordering},
with $c_0\geq6$, and let $G\sim\mathcal G_d(n)$.
Let $P$ denote the Glauber kernel for $\pi_{G,\beta}$.
There exists $\delta=\delta(\beta,d)>0$ such that, with
probability $1-o(1)$ as $n\to\infty$ through integers for which
$nd$ is even,
\begin{equation}\label{ineq:local-field-magnetization-drift}
    n\bigl((Pm)(\sigma)-m(\sigma)\bigr)\geq\delta,
    \qquad
    \frac{3\beta}{2}\leq m(\sigma)\leq1-2e^{-\beta d}.
\end{equation}
\end{lemma}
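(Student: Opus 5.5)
We will compute the drift directly from the local fields and bound the number of ``bad'' plus-to-minus flips using the expansion estimate \eqref{ineq:rrg-expansion} of Lemma~\ref{lemma:rrg-spectral-geometry}. Here $J=\beta A_G/2$, so $\ell_i(\sigma)=\beta\sum_{j\sim i}\sigma_j$ and
\[
n\bigl((Pm)(\sigma)-m(\sigma)\bigr)=\frac2n\Big(r-\sum_{i=1}^n\phi(\ell_i(\sigma))\Big),\qquad r:=|T|,\quad T:=\{i:\sigma_i=-1\}.
\]
It therefore suffices to show $\sum_i\phi(\ell_i(\sigma))\le r-\delta' n$ for some $\delta'>0$, uniformly over $\sigma$ with $3\beta/2\le m(\sigma)\le 1-2e^{-\beta d}$. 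This is the same structure as the drift verification in the proof of Proposition~\ref{prop:z2-inputs}.

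The key step is to split vertices according to how many minus neighbours they have. A vertex $i$ with $n_-(i)$ minus neighbours has $\ell_i=\beta(d-2n_-(i))$. If $n_-(i)\le d/2-t$, then $\phi(\ell_i)\le e^{-4\beta t}$. Choose $t=\kappa d$ for a small constant $\kappa$; since $\beta d\ge c_0$, this gives $\phi\le e^{-4\kappa c_0}$, a small constant.

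The bad set is $D:=\{i:n_-(i)>d/2-\kappa d\}$. We bound it through the edge count $e(T,[n])=dr$: the number of edges incident to $T$ with both ends in $T$ is $dr-|\partial T|$, and $|\partial T|\ge (d-K\sqrt d)r(1-r/n)$. Thus the total number of minus-neighbour incidences is
\[
\sum_i n_-(i)=dr=|\partial T|+2e(T),
\]
and vertices in $D$ each absorb more than $(1/2-\kappa)d$ of these incidences. Splitting into $D\cap T$ and $D\setminus T$ and using the expansion bound on both $T$ and on $D\cup T$ (via \eqref{ineq:rrg-expansion} applied to $S=D\cup T$, which bounds edges inside $D\cup T$), we aim for a bound of the form
\[
|D|\le \Big(\frac{2r}{n}+O\Big(\frac1{\sqrt d}\Big)+O(\kappa)\Big)\cdot C r.
\]
The resulting estimate is then
\[
\sum_i\phi(\ell_i)\le |D|+ne^{-4\kappa\beta d},
\]
and we compare this with $r$.

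The regimes are handled as follows. With $m(\sigma)=1-2r/n$, the condition $m\ge 3\beta/2$ gives $r/n\le 1/2-3\beta/4$. Near the balanced end, the fraction of minus spins is only slightly below $1/2$, so we need $|D|\le r-\Omega(\beta)n$. The hard part is that the crude expansion bound loses $K\sqrt d$, while the margin is only $\beta d\cdot r$ in edge counts. Since $\beta\ge c_0/d$ gives a margin of order $c_0$ per vertex, and $\beta\le 1/(16\sqrt d)$ gives $\beta d\le\sqrt d/16$, I expect the margin $\beta d$ to sit below the $K\sqrt d$ error. A pure counting argument may therefore fail near $m\approx 3\beta/2$.

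At that end I would instead use a smoother bound. Write
\[
\sum_i\phi(\ell_i)=\sum_i\frac{1-\tanh(\ell_i)}{2},
\]
and use that $\tanh$ is approximately linear: $\tanh(\ell_i)\ge \ell_i-\ell_i^3/3$ on $|\ell_i|\le 1$, with $|\ell_i|\le\beta d$. Summing the linear part gives
\[
\sum_i\ell_i=\beta d\sum_j\sigma_j=\beta d\,nm,
\]
which is exact, with no expansion error. The quadratic and cubic corrections are then controlled by
\[
\sum_i\Big(\sum_{j\sim i}\sigma_j\Big)^2=\|A_G\sigma\|^2\le d^2m^2n+4dn,
\]
using \eqref{eq:rrg-spectral-bound}. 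This gives a drift of order
\[
\beta d\,m-\beta^2 d\,(\text{corrections})-\cdots \gtrsim \beta d\Big(m-\frac{3\beta}{2}\cdot\text{const}\Big),
\]
and the threshold $3\beta/2$ should appear exactly here.

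Near the top end, $m$ close to $1-2e^{-\beta d}$, we have $r\ge e^{-\beta d}n$, and the minus-flip rate from the $n-r$ plus spins is at most about $e^{-2\beta d}$ times a constant plus the $|D|$ contribution. Expansion with $r/n$ small makes $|D|=O(r/\sqrt d)$, so the drift is at least $r/2$. The main obstacle is matching the two regimes, linearization for small $m$ and expansion counting for moderate and large $m$, over the whole range, with $\delta$ depending on $\beta$ and $d$.
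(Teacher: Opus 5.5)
Your proposal has a genuine gap in the regime you flag as hardest. Your second-moment input is right: $\|A_G\sigma\|^2\le d^2m^2n+4dn$, equivalently $\frac1n\sum_i(\ell_i-\kappa u)^2\le4\beta^2d(1-u^2)$ with $\kappa=\beta d$ and $u=m(\sigma)$, is what the paper uses from \eqref{eq:rrg-spectral-bound}. The linearization built on it fails, though, for three reasons.

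\emph{The cubic bound is misapplied.} The inequality $\tanh x\ge x-x^3/3$ holds only for $x\ge0$; at $x=-1$ one has $\tanh(-1)\approx-0.76<-2/3$. Moreover, the fields are not confined to $|\ell_i|\le1$: they range over $[-\beta d,\beta d]$ with $\beta d\ge c_0\ge6$.

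\emph{Second moments cannot control a cubic correction.} With only the exact mean $\frac1n\sum_i\ell_i=\kappa u$ and the second moment above, a cubic correction can only be bounded as $\frac1{3n}\sum_i|\ell_i|^3\le\frac{\kappa}{3}(\kappa^2u^2+4\beta^2d)$. This is at least $8\beta^2d$, which swamps the linear gain $(\kappa-1)u\le\frac32\beta^2d$ at $u=3\beta/2$.

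\emph{What is missing} is a \emph{global} quadratic bound $\tanh x\ge x-cx^2$ valid for all $x\in\mathbb R$. Summing it gives drift at least $(\kappa-1)u-c(\kappa^2u^2+4\beta^2d)$, which is concave in $u$ and can be checked at $u=3\beta/2$ and $u=1/\kappa$. At $u=3\beta/2$, using $\kappa\ge6$ and $\beta^2d\le1/256$, this is positive only for $c$ below roughly $0.31$; the paper takes $c=3/10$. So the threshold $3\beta/2$ does not come out of an order-of-magnitude computation; it depends on this sharp constant.

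You also leave the range $u>1/\kappa$ open (``matching the two regimes''). Write $\kappa'$ for your small constant, to avoid a clash with $\kappa=\beta d$. With a fixed threshold of $d/2-\kappa'd$ minus neighbours, each good vertex costs up to $e^{-4\kappa'\beta d}$. Near $u=1-2e^{-\beta d}$, where $r\approx ne^{-\beta d}$, the total $ne^{-4\kappa'\beta d}$ is small compared with $r$ only if $\kappa'>1/4$. So the threshold must depend on $m$, and you would still need to show that the counting regime overlaps the linearization regime. That is not done.

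The paper needs neither counting nor the expansion estimate for this lemma. For $\kappa u>1$ it Taylor-expands $\tanh$ around the mean $\kappa u$. The first-order term vanishes because $\sum_i(H_i-\kappa u)=0$, and $\|\tanh''\|_\infty<1$ gives $\frac1n\sum_i\tanh H_i\ge\tanh(\kappa u)-2\beta^2d(1-u^2)$. The map $u\mapsto\tanh(\kappa u)-(1+u)/2$ is concave on $[1/\kappa,1-2e^{-\kappa}]$ and positive at both endpoints. This yields drift at least $(\tfrac12-4\beta^2d)(1-u)\ge\frac12e^{-\kappa}$. Both regimes use only the first and second moments of the local fields, and they meet exactly at $\kappa u=1$.
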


\begin{proof}
Put $\kappa:=\beta d$. The assumptions give
$\kappa\geq6$ and $\beta^2d\leq1/256$.
By \eqref{eq:rrg-spectral-bound}, with probability $1-o(1)$,
\[
    \left\|A_G-\frac dn\1\1^\top\right\|_{\op}<2\sqrt d.
\]
Fix $\sigma\in\Omega$ in the stated range, and set $u:=m(\sigma)$, and $H_i:=\beta\sum_{j\sim i}\sigma_j$. Then
\[
    n\bigl((Pm)(\sigma)-m(\sigma)\bigr)
    =\frac1n\sum_{i=1}^n\tanh H_i-u.
\]
Since $A_G\1=d\1$, $\sigma-u\1\perp\1$, and
$\|\sigma-u\1\|_2^2=n(1-u^2)$,
\[
    \begin{aligned}
        \frac1n\sum_{i=1}^nH_i
        =\frac{\beta}{n}\1^\top A_G\sigma=\kappa u, \quad \text{and} \quad 
        \frac1n\sum_{i=1}^n(H_i-\kappa u)^2
        =\frac{\beta^2}{n}\|A_G(\sigma-u\1)\|_2^2
          \leq4\beta^2d(1-u^2).
    \end{aligned}
\]
Suppose first that $\kappa u\leq1$.
The elementary inequality $\tanh x\geq x-3x^2/10$,
$x\in\R$, gives
\[
    \begin{aligned}
        n\bigl((Pm)(\sigma)-m(\sigma)\bigr)
        \geq(\kappa-1)u-\frac3{10n}\sum_{i=1}^nH_i^2
        \geq(\kappa-1)u
          -\frac3{10}\bigl(\kappa^2u^2+4\beta^2d\bigr).
    \end{aligned}
\]
The last expression is concave in $u$. Evaluating it at
$u=3\beta/2$ and $u=1/\kappa$, using $\kappa\geq6$ and
$\beta^2d\leq1/256$, gives
\[
    n\bigl((Pm)(\sigma)-m(\sigma)\bigr)
    \geq\frac{\beta^2d}{32},
    \qquad \frac{3\beta}{2}\leq u\leq\frac1\kappa.
\]
Now suppose that $\kappa u>1$.
Since $\|\tanh''\|_\infty<1$ and
$\sum_i(H_i-\kappa u)=0$, Taylor's theorem gives
\[
    \begin{aligned}
        \frac1n\sum_{i=1}^n\tanh H_i
        \geq\tanh(\kappa u)
          -\frac1{2n}\sum_{i=1}^n(H_i-\kappa u)^2
        \geq\tanh(\kappa u)-2\beta^2d(1-u^2).
    \end{aligned}
\]
For $\kappa\geq6$, the function
$u\mapsto\tanh(\kappa u)-(1+u)/2$ is concave on
$[1/\kappa,1-2e^{-\kappa}]$ and positive at both endpoints.
Hence
\[
    \tanh(\kappa u)\geq\frac{1+u}{2},
    \qquad \frac1\kappa\leq u\leq1-2e^{-\kappa}.
\]
Consequently,
\[
    \begin{aligned}
        n\bigl((Pm)(\sigma)-m(\sigma)\bigr)
        \geq\frac{1-u}{2}-2\beta^2d(1-u^2)
        \geq\left(\frac12-4\beta^2d\right)(1-u)
          \geq\frac{1-u}{4}
          \geq\frac12e^{-\kappa}.
    \end{aligned}
\]
Combining the two cases proves the lemma with $ \delta:=\frac1{32}\min\{\beta^2d,e^{-\kappa}\}>0$.
\end{proof}
\begin{proof}[Proof of Theorem~\ref{theorem:rrg-ordering}]
Choose $c_0,d_0$ sufficiently large. We use the estimates of
Lemmas~\ref{lemma:rrg-spectral-geometry}
and~\ref{lemma:local-field-magnetization-drift}, which hold
simultaneously with probability $1-o(1)$. Put
\[
    \kappa:=\beta d,
    \qquad J:=\frac{\beta}{2}A_G,
    \qquad R:=\lfloor ne^{-\kappa}\rfloor,
    \qquad L:=n-2R.
\]
For the cutoff $M$ in the statement, let
\[
    \capOmega_M
      =\{\sigma\in\Omega:m(\sigma)\geq3\beta/2\},
    \qquad
    \mu:=\pi_{G,\beta}(\,\cdot\mid \capOmega_M),
    \qquad
    \widehat P:=P_\mu.
\]
Let $P$ denote the Glauber kernel for $\pi_{G,\beta}$.
Here and below, $C$ depends only on $\beta,d$.
For all sufficiently large $n$,
\[
    0\leq M<L<n,
    \qquad
    R\geq\frac n2e^{-\kappa},
    \qquad
    L\leq(1-e^{-\kappa})n.
\]
We verify the hypotheses of Theorem~\ref{theorem:lifting}-(b)
for $\capOmega_L\subset \capOmega_M$. The estimate in the proof of
Lemma~\ref{lemma:rrg-spectral-geometry} gives
\[
    \begin{aligned}
        8\omega_L(J)
        \leq\frac{8\beta dR}{n}
          +8\beta\bigl(\sqrt d+\sqrt{d-1}\bigr)
        \leq8\kappa e^{-\kappa}
          +\frac{16\kappa}{\sqrt d}
             \left(1-\frac1{4d}\right)
        \leq1-\frac1{8d}.
    \end{aligned}
\]
The last bound follows by convexity in $\kappa$ and evaluation
at the endpoints of $[c_0,\sqrt d/16]$, for sufficiently large
$c_0,d_0$. At $\1$, we have $\ell_i(\1)=\kappa$, and hence
\[
    \sum_{i=1}^n\psi(\ell_i(\1))
    =n\psi(\kappa)
    \leq(2\kappa+1)ne^{-2\kappa}
    \leq\delta_*R,
\]
for sufficiently large $c_0$ and $n$. Thus
\eqref{assumption:stability-entropy} holds at cutoff $L$. Let $\delta\in(0,1)$ be as in
Lemma~\ref{lemma:local-field-magnetization-drift}.
For $\sigma\in\Omega_k$ with $M\leq k<L$, 
\[
    \frac{3\beta}{2}\leq m(\sigma)=\frac kn
    \leq1-\frac{2(R+1)}n<1-2e^{-\kappa}.
\]
Consequently,
\[
    \begin{aligned}
      q_{+}(\sigma)
        -q_{-}(\sigma)\one_{\{k>M\}}
        \geq\frac n2\bigl((Pm)(\sigma)-m(\sigma)\bigr)
        \geq\frac{\delta}{2}
        \geq\frac{\delta(n-k)}{2n},
    \end{aligned}
\]
which establishes \eqref{assumption:tail-stability}.
Moreover, $\|J\|_{\op}=\kappa/2$ and $|\capOmega_M|\leq2^n$ give
\begin{equation}\label{eq:rrg-atom-bound}
    \max_{\sigma\in \capOmega_M}\log\frac1{\mu(\sigma)}
    \leq n\log2+2n\|J\|_{\op}
    =(\kappa+\log2)n.
\end{equation}
This verifies \eqref{assumption:H4}. Therefore, Theorem~\ref{theorem:lifting}-(b) proves
\eqref{eq:rrg-basin-mlsi}. In particular, by the resulting MLSI bound, \eqref{eq:rrg-atom-bound}, and
\eqref{eq:mixing-time-bounds}, after increasing $C$, we have
\begin{equation}\label{eq:rrg-reflected-mixing}
    \sup_{\sigma\in \capOmega_M}
    \|\widehat P^t(\sigma,\cdot)-\mu\|_{\TV}
    \leq n^{-12},\qquad \forall t\geq Cn\log n.
\end{equation}
For $\sigma\in\Omega^+\setminus \capOmega_M$, put
$T:=\{i:\sigma_i=-1\}$. Since
$0\leq m(\sigma)<3\beta/2\leq1/3$, we have
$n/3\leq|T|\leq n/2$. For sufficiently large $d_0$,
Lemma~\ref{lemma:rrg-spectral-geometry} gives $ |\partial T| \geq \frac{dn}{6}$. Therefore,
\[
    \frac{\pi_{G,\beta}(\sigma)}{\pi_{G,\beta}(\1)}
    =e^{-2\beta|\partial T|}
    \leq e^{-\kappa n/3}\leq e^{-2n},
\]
and
\begin{equation}\label{eq:rrg-phase-mass}
    \begin{aligned}
        \|\mu-\pi_{G,\beta}^+\|_{\TV}
        =\pi_{G,\beta}^+(\capOmega_M^c)
        \leq\sum_{\sigma\in\Omega^+\setminus \capOmega_M}
          \frac{\pi_{G,\beta}(\sigma)}{\pi_{G,\beta}(\1)}
        \leq2^ne^{-2n}\leq e^{-n}.
    \end{aligned}
\end{equation}
Finally, define
\[
    \tau_0:=\inf\{t\geq0:X_t\notin \capOmega_M\}
           =\inf\{t\geq0:m(X_t)<3\beta/2\}.
\]
Since $5\beta/2<1-2e^{-\kappa}$,
Lemma~\ref{lemma:local-field-magnetization-drift} applies
throughout $[3\beta/2,5\beta/2]$. This interval has midpoint
$2\beta$, so Lemma~\ref{lemma:basin-retention} gives, for some
$\gamma=\gamma(\beta,d)>0$,
\begin{equation}\label{eq:rrg-basin-retention}
    \sup_{\sigma:\,m(\sigma)\geq2\beta}
    \P_\sigma(\tau_0\leq e^{\gamma n})
    \leq e^{-\gamma n}.
\end{equation}
Couple the chains with kernels $P$ and $\widehat P$ using
the same updates. They agree until $\tau_0$, and hence
\[
    \|P^t(\sigma,\cdot)-\widehat P^t(\sigma,\cdot)\|_{\TV}
    \leq\P_\sigma(\tau_0\leq t).
\]
Increase $C$ so that $C\geq\gamma^{-1}$.
For $m(\sigma)\geq2\beta$ and integer
$Cn\log n\leq t\leq e^{n/C}$,
\[
    \begin{aligned}
        \|P^t(\sigma,\cdot)-\pi_{G,\beta}^+\|_{\TV}
        &\leq\P_\sigma(\tau_0\leq t)
          +\|\widehat P^t(\sigma,\cdot)-\mu\|_{\TV}
          +\|\mu-\pi_{G,\beta}^+\|_{\TV}\\
        &\leq e^{-\gamma n}+n^{-12}+e^{-n}
        \leq n^{-10},
    \end{aligned}
\]
for all sufficiently large $n$, proving
\eqref{eq:rrg-phase-ordering}.
\end{proof}

\subsection{High-field functional inequalities}

We first prove Corollary~\ref{cor:high-field-func-ineq} by verifying
the assumptions of Theorem~\ref{theorem:lifting}, and then apply it
to prove Corollary~\ref{corollary:SK-high-field-mlsi}.

\begin{proof}[Proof of Corollary~\ref{cor:high-field-func-ineq}]
Choose $0<c\leq\delta_*/4$, and recall that $\gamma=\omega_0(J)$.
Put
\[
    J':=J-\frac{\lambda_{\max}(J)+\lambda_{\min}(J)}2I_n,
    \qquad A:=J-\diag(J).
\]
The scalar shift preserves the measure, local fields, and sparse
spectral widths. Moreover, $\|J'\|_{\op}=\gamma/2$ and
$\|A\|_{\op}\leq2\|J'\|_{\op}=\gamma$.
Set
\[
    R:=\left\lfloor\frac{\delta n}{2}\right\rfloor,
    \qquad M:=n-2R.
\]
Since $n\geq2/\delta$, we have $R\geq1$,
$R\geq\delta n/4$, and $2R\leq\lfloor\delta n\rfloor$. Thus,
\[
    8\omega_M(J')
    =8\omega_M(J)
    \leq8\omega_{n-\lfloor\delta n\rfloor}(J)
    \leq1-\eps.
\]
By monotonicity of $\phi$ and
\eqref{assumption:high-field-corollary},
\[
    \sum_{i=1}^n\phi(\ell_i(\1))
    \leq\sum_{i=1}^n\phi(\ell_i(\1)-8\gamma)
    \leq c\delta n\leq\delta_*R.
\]
Hence \eqref{assumption:stability} holds at cutoff $M$. To verify \eqref{assumption:tail-stability}, fix
$\sigma\notin\capOmega_M$, put $T:=\{i:\sigma_i=-1\}$ and
$r:=|T|>R$, and let $E:=\{i:4(A\one_T)_i>8\gamma\}$.
Since $\ell_i(\sigma)=\ell_i(\1)-4(A\one_T)_i$ and
$\|A\one_T\|_2^2\leq\gamma^2r$, for $\gamma>0$,
\[
    |E|\leq\frac{\|A\one_T\|_2^2}{4\gamma^2}\leq\frac r4.
\]
If $\gamma=0$, then $A=0$ and $E=\varnothing$.
For $i\notin E$,
$\phi(\ell_i(\sigma))
 \leq\phi(\ell_i(\1)-8\gamma)$.
Consequently,
\[
    \sum_{i=1}^n\phi(\ell_i(\sigma))
    \leq\frac r4+
        \sum_{i=1}^n\phi(\ell_i(\1)-8\gamma)
    \leq\frac r4+c\delta n
    \leq\frac r4+\delta_*R
    \leq\frac r2.
\]
Let $P$ denote the Glauber kernel for $\mu_{J,h}$. Then, for $\sigma\in\Omega_k$ with
$-n\leq k<M$,
\[
    \begin{aligned}
        q_{+}(\sigma)-q_{-}(\sigma)
        =\frac1n\left(
            r-\sum_{i=1}^n\phi(\ell_i(\sigma))\right)
        \geq\frac r{2n}=\frac{n-k}{4n}.
    \end{aligned}
\]
Thus \eqref{assumption:tail-stability} holds with outer cutoff
$S=-n$. Since $\capOmega_S=\Omega$ and
$\mu_{J',h}=\mu_{J,h}$, Theorem~\ref{theorem:lifting}-(a)
gives $\gap(\mu_{J,h})\gtrsim_{\eps,\gamma}1/n$.

For the entropy estimate, the additional assumption gives
$\sum_i\psi(\ell_i(\1))\leq c\delta n\leq\delta_*R$,
which is \eqref{assumption:stability-entropy}.
Using $\psi(u)\geq(-u)_+$ and
$\ell_i(\1)=h_i+2(A\1)_i$, we obtain
\[
    \begin{aligned}
        \sum_{i=1}^n(-h_i)_+
        \leq\sum_{i=1}^n(-\ell_i(\1))_+
             +2\|A\1\|_1
        \leq\sum_{i=1}^n\psi(\ell_i(\1))
             +2\sqrt n\,\|A\1\|_2
        \leq c\delta n+2\gamma n.
    \end{aligned}
\]
Since $\sum_i|h_i|=\sum_i h_i+2\sum_i(-h_i)_+$ and
$\mu_{J,h}=\mu_{J',h}$,
\[
    \begin{aligned}
        \max_{\sigma\in\Omega}\log\frac1{\mu_{J,h}(\sigma)}
        \leq n\log2+2n\|J'\|_{\op}
             +2\sum_{i=1}^n|h_i|
        &\leq C(1+\gamma+H)n.
    \end{aligned}
\]
This verifies \eqref{assumption:H4}. Since
$M=n-2R\leq(1-\delta/2)n$, Theorem~\ref{theorem:lifting}-(b)
gives $\rho_{\mathrm{LS}}(\mu_{J,h})
\gtrsim_{\eps,\gamma,\delta,H}1/n$, completing the proof.  
\end{proof}

\begin{proof}[Proof of Corollary~\ref{corollary:SK-high-field-mlsi}]
Set $J:=\beta\widetilde W/2$, so that
$\mu_{J,h}=\mu_{\beta W/2,h}$.
Let $K$ be the constant in \eqref{eq:z2-gaussian-width}
and let $c>0$ be the constant in
Corollary~\ref{cor:high-field-func-ineq}.
Choose a universal constant $C>0$ sufficiently large as below,
and put
\[
    \delta=\frac{1}{C(1+\beta)^4},
    \qquad
    h_0=C\left(1+\beta\sqrt{\log(1+\beta)}\right).
\]
Then $8K\beta\sqrt{\delta\log(e/\delta)}\leq1/2$. Since $0\leq-\psi'\leq3$ and $\psi(u)\leq(2/e)e^{-u}$,
for $u,\beta\geq0$,
\[
    \begin{aligned}
        \EE[\psi(u+\beta Z)]
        \leq \psi(u/2)+3\EE[(-u/2-\beta Z)_+]
        \lesssim e^{-u/2}
            +\beta\exp\!\left(-\frac{u^2}{8(1+\beta^2)}\right).
    \end{aligned}
\]
Taking $C$ sufficiently large ensures
$h_0-8K\beta\geq h_0/2$ and $\EE[\psi(h_0-8K\beta+\beta Z)] \leq c \delta / 2$. By \eqref{eq:z2-gaussian-width} and
\eqref{eq:z2-gaussian-entropy-general}, with probability
$1-o(1)$,
\[
    \omega_0(J)\leq2\|J\|_{\op}\leq K\beta,
    \qquad
    8\omega_{n-\lfloor\delta n\rfloor}(J)\leq\frac12,\qquad \sum_{i=1}^n\psi(h_0-8K\beta+\beta g_i)\leq c\delta n.
\]
For every $h\in\R^n$ with $\min_i h_i\geq h_0$ and
$n^{-1}\sum_i h_i\leq H$, monotonicity gives
\[
    \sum_{i=1}^n\psi(h_i+\beta g_i-8\omega_0(J))
    \leq\sum_{i=1}^n\psi(h_0-8K\beta+\beta g_i)
    \leq c\delta n.
\]
Since $\phi\leq\psi$ and $\psi$ is decreasing, both field
conditions in Corollary~\ref{cor:high-field-func-ineq} hold.
Thus $\rho_{\mathrm{LS}}(\mu_{J,h})
\gtrsim_{\beta,H}n^{-1}$.
Since $h_i\geq0$,
\[
    \max_{\sigma\in\Omega}\log\frac1{\mu_{J,h}(\sigma)}
    \leq n\log2+2n\|J\|_{\op}+2\sum_{i=1}^n h_i
    \leq n(\log2+K\beta+2H).
\]
The MLSI bound and \eqref{eq:mixing-time-bounds} therefore give
\[
    t_{\mathrm{mix}}(P_{\mu_{J,h}},\eta)
    \leq C_{\beta,H}n
        \left(\log n+\log\frac1\eta\right),
    \qquad 0<\eta\leq\frac12.
\]
The event above depends only on $W$ and $\beta$, so all estimates
hold uniformly over the stated external fields.
\end{proof}

\subsection*{Statement of AI use}
The proofs in this work were developed through discussions with ChatGPT 5.5 Pro, motivated by the phase ordering of $\mathbb{Z}_2$ synchronization. The authors proposed conditioning on a core $\widehat\Omega$, establishing spectral independence within that core, introducing the quadratic boundary form $\mathcal Q_{\nu}^{\widehat\Omega}(f;a)$ to account for transitions outside the core, and applying linear-tilt stochastic localization scheme. Subsequent discussions with ChatGPT 5.5 Pro led to the boundary absorption argument, the extension to MLSI, and the use of Lyapunov conditions to lift the core functional inequalities to the hypercube. The authors wrote the initial manuscript, and then revised it and checked the math with assistance from ChatGPT 5.6 and ChatGPT 6. All AI-assisted calculations and suggestions were carefully checked by the authors, who take full responsibility for the contents of the paper.

\subsection*{Acknowledgements} We thank Reza Gheissari for introducing the high-field SK mixing problem during his visit to Brown, and Dan Mikulincer for helpful discussions on stochastic localization. Youngtak Sohn is supported by NSF grant DMS-2601870. Vasyl Paliy was partially supported by U.S. Department of Education Graduate Assistance in Areas of National Need (GAANN) award P200A240015-25.

\bibliography{main}
\bibliographystyle{alpha}
\end{document}